\numberwithin{equation}{section}
\newenvironment{thmref}[1]{\begingroup}{\endgroup}
\newtheorem{cor}[equation]{Corollary}
\newtheorem{lem}[equation]{Lemma}
\newtheorem{prop}[equation]{Proposition}
\newtheorem{thm}[equation]{Theorem}
\Crefname{lem}{Lemma}{Lemmas}
\crefname{lem}{lemma}{lemmas}
\Crefname{cor}{Corollary}{Corollaries}
\crefname{cor}{corollary}{corollaries}
\Crefname{prop}{Proposition}{Propositions}
\crefname{prop}{proposition}{propositions}
\Crefname{thm}{Theorem}{Theorems}
\crefname{thm}{theorem}{theorems}
\theoremstyle{definition}
\newtheorem{assump}[equation]{Assumption}
\newtheorem{defn}[equation]{Definition}
\newtheorem{eg}[equation]{Example}
\newtheorem{notation}[equation]{Notation}
\newtheorem*{notation*}{Notation}
\newtheorem{rem}[equation]{Remark}
\theoremstyle{remark}
\newtheorem*{ack}{Acknowledgments}
\Crefname{rem}{Remark}{Remarks}
\crefname{rem}{remark}{remarks}
\newcounter{case}
\newcounter{caseholder} 
\numberwithin{case}{caseholder}
\newenvironment{case}[1][\unskip]{\refstepcounter{case}\bf
\medskip \noindent Case \thecase\ #1.\ \it}{\unskip\upshape}
\renewcommand{\thecase}{\arabic{case}}
\Crefname{case}{Case}{Cases}
\crefname{case}{case}{cases}
\newcounter{subcase}
\newenvironment{subcase}[1][\unskip]{\refstepcounter{subcase}\bf
\medskip \noindent Subcase \thesubcase\ #1.\ \it}{\unskip\upshape}
\numberwithin{subcase}{case}
\crefname{subcase}{subcase}{subcases}
\Crefname{subcase}{Subcase}{Subcases}
\newcounter{subsubcase}
\newenvironment{subsubcase}[1][\unskip]{\refstepcounter{subsubcase}\bf
\medskip \noindent Subsubcase \thesubsubcase\ #1.\ \it}{\unskip\upshape}
\numberwithin{subsubcase}{subcase}
\crefname{subsubcase}{subsubcase}{subsubcases}
\Crefname{subsubcase}{Subsubcase}{Subsubcases}
\newcommand{\refnote}[1]{\marginpar{%
	\color{blue}
	\vbox to 0pt{\vss
	$\begin{pmatrix} \text{see} \\[-3pt] \text{note} \\[-3pt] \text{\ref{#1}} \end{pmatrix}$%
	\vskip -1.1\baselineskip}}}
\theoremstyle{definition}
\newtheorem{aid}{}
\numberwithin{aid}{section}
\newcommand{\oldaid}{}
\let\oldaid=\aid
\renewcommand{\aid}{\bigbreak\oldaid}
\newcommand{\oldendaid}{}
\let\oldendaid=\endaid
\renewcommand{\endaid}{\oldendaid\bigskip\hrule width\textwidth \bigbreak}
\renewcommand{\pmod}[1]{\ (\mathrm{mod}~#1)}
\let\oldenumerate=\enumerate
\def\enumerate{\oldenumerate \itemsep=\smallskipamount}
\let\olditemize=\itemize
\def\itemize{\olditemize \itemsep=\smallskipamount}
\newcommand{\cartprod}{\mathbin{\raise0.7pt\hbox{\smaller[2]$\square$}}} 
\newcommand{\nh}{\mathord{\mathchoice
	{\hbox to 0pt{\vrule width 6pt height 2.75pt depth -1.95pt\hss}n}%
	{\hbox to 0pt{\vrule width 6pt height 2.75pt depth -1.95pt\hss}n}%
	{\hbox to 0pt{\vrule width 5pt height 2pt depth -1.6pt\hss}n}%
	{\hbox to 0pt{\vrule width 4.3pt height 1.6pt depth -1.1pt\hss}n}}}
\newcommand{\ZZ}{\mathbb{Z}}
\DeclareMathOperator{\Aut}{Aut}
\DeclareMathOperator{\Cay}{Cay}
\DeclareMathOperator{\lcm}{lcm}
\newcommand{\pref}[1]{\textup(\ref{#1}\textup)}
\newcommand{\fullref}[2]{\ref{#1}\pref{#1-#2}}
\newcommand{\fullcref}[2]{\cref{#1}\pref{#1-#2}}
\newcommand{\fullCref}[2]{\Cref{#1}\pref{#1-#2}}
\newcommand{\noprelistbreak}{\smallskip\@nobreaktrue\nopagebreak} 
\begin{document}

\title[Unstable circulants of valency at most 7]{Automorphisms of the double cover of \\ a circulant graph of valency at most 7}

\author{Ademir Hujdurović}
\address{University of Primorska, UP IAM, Muzejski trg 2, 6000 Koper, Slovenia and University of Primorska, UP FAMNIT, Glagolja\v ska 8, 6000 Koper, Slovenia}
\email{ademir.hujdurovic@upr.si}

\author{Đorđe Mitrović}
\address{University of Primorska, UP FAMNIT, Glagolja\v ska 8, 6000 Koper, Slovenia}
\email{mitrovic98djordje@gmail.com}

\author{Dave Witte Morris}
\address{Department of Mathematics and Computer Science, University of Lethbridge, Lethbridge, Alberta, T1K~3M4, Canada}
\email{dave.morris@uleth.ca}

\date{\today}

\begin{abstract}
A graph~$X$ is said to be \emph{unstable} if the direct product $X \times K_2$ (also called the canonical double cover of~$X$) has automorphisms that do not come from automorphisms of its factors~$X$ and~$K_2$. It is \emph{nontrivially unstable} if it is unstable, connected, and non-bipartite, and no two distinct vertices of X have exactly the same neighbors.

We find all of the nontrivially unstable circulant graphs of valency at most~$7$. (They come in several infinite families.) We also show that the instability of each of these graphs is explained by theorems of Steve Wilson. This is best possible, because there is a nontrivially unstable circulant graph of valency~$8$ that does not satisfy the hypotheses of any of Wilson's four instability theorems for circulant graphs.
\end{abstract}

\maketitle

\section{Introduction}

Let $X$ be a circulant graph.
(All graphs in this paper are finite, simple, and undirected.)

\begin{defn}[\cite{CanCover}]
The \emph{canonical bipartite double cover} of~$X$ is the bipartite graph~$BX$ with $V(BX) = V(X) \times \{0,1\}$, where
	\[ \text{$(v,0)$ is adjacent to $(w,1)$ in $BX$}
	\quad \iff \quad
	\text{$v$ is adjacent to~$w$ in~$X$} . \]
\end{defn}

Letting $S_2$ be the symmetric group on the $2$-element set~$\{0,1\}$, it is clear that the direct product $\Aut X \times S_2$ is a subgroup of $\Aut BX$. We are interested in cases where this subgroup is proper:

\begin{defn}[{\cite[p.~160]{MarusicScapellatoSalvi}}]
 If $\Aut BX \neq \Aut X \times S_2$, then $X$ is \emph{unstable}.
\end{defn}

It is easy to see (and well known) that if $X$ is disconnected, or is bipartite, or has ``twin'' vertices (see \cref{TwinFreeDefn} below), 
then $X$ is unstable (unless $X$ is the trivial graph with only one vertex). The following definition rules out these trivial examples:

\begin{defn}[cf.\ {\cite[p.~360]{Wilson}}]
If $X$ is connected, nonbipartite, twin-free, and unstable, then $X$ is \emph{nontrivially unstable}. 
\end{defn}

S.\,Wilson found the following interesting conditions that force a circulant graph to be unstable. (See \cref{CayleyDefn} for the definition of the ``Cayley graph'' notation $\Cay(G, S)$.)

\begin{thm}[Wilson {\cite[Appendix~A.1]{Wilson} (and \cite[p.~156]{QinXiaZhou})}] \label{Wilson}
Let $X = \Cay(\ZZ_n, S)$ be a circulant graph, such that $n$ is even. Let $S_e = S \cap 2 \ZZ_n$ and $S_o = S \setminus S_e$. If any of the following conditions is true, then $X$ is unstable.
\noprelistbreak
	\begin{enumerate}
	\renewcommand{\theenumi}{C.\arabic{enumi}}
	\item \label{Wilson-C1}
	There is a nonzero element~$h$ of\/ $2 \ZZ_n$, such that $h + S_e = S_e$.
	
	\begingroup \renewcommand{\theenumi}{C.\arabic{enumi}$'$}
	\item \label{Wilson-C2}
	$n$~is divisible by~$4$, and there exists $h \in 1 + 2\ZZ_n$, such that
		\makeatletter \renewcommand{\p@enumii}{} \makeatother 
		\begin{enumerate}
		\item \label{Wilson-C2-So}
		$2h + S_o = S_o$, 
		and 
		\item \label{Wilson-C2-s+b}	
		for each $s \in S$, such that $s \equiv 0$ or~$-h \pmod{4}$, we have $s + h \in S$.
		\end{enumerate}
	\endgroup
	
	\begingroup \renewcommand{\theenumi}{C.\arabic{enumi}$'$}
	\item \label{Wilson-C3}
	There is a subgroup~$H$ of\/~$\ZZ_n$, such that the set
		\[ R = \{\, s \in S \mid s + H \not\subseteq S \,\} ,\]
	is nonempty and has the property that if we let $d = \gcd \bigl( R \cup \{n\} \bigr)$, then $n/d$ is even, $r/d$ is odd for every $r \in R$, and either $H \nsubseteq d \ZZ_n$ or $H \subseteq 2d \ZZ_n$.
	\endgroup
	
	\item \label{Wilson-C4}
	There exists $m \in \ZZ_n^\times$, such that $(n/2) + mS = S$.
	\end{enumerate}
\end{thm}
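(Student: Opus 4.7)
The plan is to prove instability in each of the four conditions by exhibiting an explicit permutation $\phi$ of $V(BX) = \ZZ_n \times \{0,1\}$ that belongs to $\Aut BX$ but not to $\Aut X \times S_2$. Since every element of $\Aut X \times S_2$ acts on $V(BX)$ either as $(v,i) \mapsto (\alpha(v), i)$ or as $(v,i) \mapsto (\alpha(v), 1-i)$ for a single $\alpha \in \Aut X$, to exclude $\phi$ from $\Aut X \times S_2$ it suffices to show that $\phi$ either preserves the bipartition of $BX$ but induces different permutations on the two parts, or else swaps the parts in an analogously inconsistent way.

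For \pref{Wilson-C1} I would take $\phi(v, 0) = (v + h, 0)$ if $v$ is even and $(v, 0)$ otherwise, and symmetrically $\phi(v, 1) = (v, 1)$ if $v$ is even and $(v + h, 1)$ otherwise. Checking $\phi \in \Aut BX$ splits into four cases by the parities of $v$ and~$w$: when $v$ and~$w$ have the same parity (so $v - w \in S_e$), the net shift on $v-w$ is $\pm h$, which fixes $S_e$ by the hypothesis $h + S_e = S_e$; when $v$ and~$w$ have opposite parities, the shifts on the two sides cancel, so the edge is preserved. Since $h \neq 0$, we have $\phi(0, 0) = (h, 0)$ but $\phi(0, 1) = (0, 1)$, so no single $\alpha \in \Aut X$ can induce $\phi$ on both parts. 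For \pref{Wilson-C4} I would take $\phi(v, 0) = (mv, 0)$ and $\phi(v, 1) = (mv + n/2, 1)$; the adjacency condition $v - w \in S \iff m(v-w) - n/2 \in S$ is a rewriting of $(n/2) + mS = S$, and the two components of $\phi$ differ by the nonzero constant $+\,n/2$, so again $\phi \notin \Aut X \times S_2$.

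The substantive cases and the main obstacle are \pref{Wilson-C2} and \pref{Wilson-C3}; both need a more delicate construction and a finer case analysis. For \pref{Wilson-C2}, I would use $4 \mid n$ to parametrise vertices by their residue modulo~$4$ and define $\phi(v, i) = (v + \lambda_i(v)\, h, i)$, where $\lambda_i(v) \in \{0, 1, 2\}$ depends on $v \bmod 4$ and on the side $i \in \{0, 1\}$. The edge-preservation check splits into $16$ subcases indexed by $(v \bmod 4, w \bmod 4)$; hypothesis~\pref{Wilson-C2-So} accounts for the subcases whose net shift is $\pm 2h$ (applied to elements of $S_o$), while hypothesis~\pref{Wilson-C2-s+b} accounts for the subcases whose net shift is $\pm h$, and the modulo-$4$ restriction in~\pref{Wilson-C2-s+b} identifies precisely which $s \in S$ arise in those subcases. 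For \pref{Wilson-C3}, I would set $S' = \{s \in S : s + H \subseteq S\}$, so that $S = S' \sqcup R$ and $S'$ is $H$-invariant; the hypothesis that every element of $R$ is an odd multiple of~$d$ places $R \subseteq d\ZZ_n \setminus 2d\ZZ_n$. I would then build $\phi$ by translating by a suitable $h \in H$ on a specified union of cosets of $d\ZZ_n$; the dichotomy $H \nsubseteq d\ZZ_n$ versus $H \subseteq 2d\ZZ_n$ determines whether the chosen $h$ lies outside $d\ZZ_n$ (exchanging the two $d\ZZ_n / 2d\ZZ_n$ cosets) or inside $2d\ZZ_n$ (preserving them), and the two sub-constructions are unified by the requirement that $\phi$ respect the even/odd classification of multiples of~$d$. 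The main obstacle throughout is the disciplined book-keeping: the hypotheses in \pref{Wilson-C2} and \pref{Wilson-C3} are essentially the minimal algebraic consequences needed to validate the intended $\phi$, so the proof amounts to constructing $\phi$ correctly and then verifying each resulting edge case.
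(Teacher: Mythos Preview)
The paper does not actually prove this theorem; it is quoted as a known result of Wilson (with the corrected statements of \pref{Wilson-C2} and \pref{Wilson-C3} attributed to Qin--Xia--Zhou and to \cite{HujdurovicMitrovicMorris}), so there is no ``paper's own proof'' to compare against. Your strategy---exhibiting, in each case, an explicit permutation of $\ZZ_n\times\ZZ_2$ that preserves adjacency in $BX$ but visibly fails to lie in $\Aut X\times S_2$---is exactly the approach taken in Wilson's original appendix, and your constructions for \pref{Wilson-C1} and \pref{Wilson-C4} are correct and complete.

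For \pref{Wilson-C2} and \pref{Wilson-C3} your outline is on the right track but is not yet a proof: you have described the \emph{shape} of~$\phi$ (a piecewise translation governed by residues mod~$4$, respectively by cosets of $d\ZZ_n$) without actually specifying the function $\lambda_i(v)$ or the translating element~$h\in H$, and the correctness of the argument depends entirely on those choices. In particular, for \pref{Wilson-C2} you need to check that the $\pm h$ subcases really land only on those $s$ satisfying $s\equiv 0$ or $-h\pmod 4$ (and that the direction of the shift matches the hypothesis $s+h\in S$ rather than $s-h\in S$); for \pref{Wilson-C3} you need to verify that the two branches of the dichotomy genuinely yield a well-defined bijection and that edges with one endpoint translated and the other not are handled by the $H$-invariance of $S'=S\setminus R$. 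None of this is deep, but as you say yourself the content of the proof \emph{is} the book-keeping, so until the tables are written out the argument is incomplete.
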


\begin{rem} \label{WilsonCorrections}
As explained in \cite[Rem.~3.14]{HujdurovicMitrovicMorris}, 
the two statements~\pref{Wilson-C2} and~\pref{Wilson-C3} are slightly corrected versions of the original statements of Theorems~C.2 and~C.3 that appear in~\cite{Wilson}. The correction~\pref{Wilson-C2} is due to Qin-Xia-Zhou \cite[p.~156]{QinXiaZhou}.
\end{rem}

\begin{defn}
We say that $X$ has \emph{Wilson type} \pref{Wilson-C1}, \pref{Wilson-C2}, \pref{Wilson-C3}, or~\pref{Wilson-C4}, respectively, if it satisfies the corresponding condition of \cref{Wilson}. 
\end{defn}

\begin{rem}
Wilson type of a graph needs not be unique i.e. a graph may satisfy more than one condition from \cref{Wilson}. For example, for every odd integer $k$ with $\gcd(k,3)=1$, the graph
\[\Cay(\ZZ_{8k},\{\pm 2k,\pm 3k\})\]
has Wilson type \pref{Wilson-C1} (with $h=4k$) as well as Wilson types \pref{Wilson-C3} (with $H=\{0,4k\}, R=\{\pm 3k\}$ and $d=k$) and \pref{Wilson-C4} (with $g=3$).

Additionally, the graph $\Cay(\ZZ_8,\{\pm 1,\pm 2,\pm 3\})$ has Wilson type \pref{Wilson-C1} (with $h=4$), Wilson type \pref{Wilson-C2} (with $h=1$) and Wilson type \pref{Wilson-C4} (with $g=1$).

\end{rem}

In this terminology (modulo the corrections mentioned in \cref{WilsonCorrections}), Wilson \cite[p.~377]{Wilson} conjectured that every nontrivially unstable circulant graph has a Wilson type. Unfortunately, this is not true: other conditions that force a circulant graph to be unstable are described in \cite[\S 3]{HujdurovicMitrovicMorris} 
(and these produce infinitely many counterexamples). Prior to the work in \cite{HujdurovicMitrovicMorris}, the following counterexample (which is the smallest) had been published:

\begin{eg}[Qin-Xia-Zhou {\cite[p.~156]{QinXiaZhou}}]
The circulant graph
	\[ \Cay \bigl( \ZZ_{24}, \{ \pm2, \pm3, \pm8, \pm9, \pm10\} \bigr) \]
is nontrivially unstable, but does not have a Wilson type.
\end{eg}

The main result of this paper establishes that Wilson's conjecture is true for graphs of valency at most~$7$:

\begin{thm} \label{lessthan8}
Every nontrivially unstable circulant graph of valency at most~$7$ has Wilson type \pref{Wilson-C1}, \pref{Wilson-C2}, \pref{Wilson-C3}, or~\pref{Wilson-C4}.
\end{thm}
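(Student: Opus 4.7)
The plan is a direct case analysis on the valency $d = |S|$ of $X = \Cay(\ZZ_n, S)$. Throughout, I use the standing hypotheses that $X$ is connected (so $\langle S \rangle = \ZZ_n$), non-bipartite (so $S$ is not contained in a coset of an index-$2$ subgroup of $\ZZ_n$), and twin-free (so $h + S \neq S$ for every nonzero $h \in \ZZ_n$, since $s + S = S$ would make $0$ and $s$ twins; note that this already rules out Wilson type \pref{Wilson-C1} being applied to $S$ itself rather than to $S_e$). The valency $d$ is odd iff $n$ is even and $n/2 \in S$, so the connection set takes the shape $\{\pm s_1, \ldots, \pm s_k\}$ when $d$ is even and $\{\pm s_1, \ldots, \pm s_k\} \cup \{n/2\}$ when $d$ is odd.

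For $d \in \{2, 3\}$, the connection set is essentially a single pair (plus possibly $n/2$), and $X$ is a cycle or a prism-like graph; the nontrivially unstable examples can be listed directly and seen to satisfy \pref{Wilson-C1} or \pref{Wilson-C4}, or else fail twin-freeness or non-bipartiteness.

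For the main cases $d \in \{4, 5, 6, 7\}$, I would assume for contradiction that $X$ is nontrivially unstable but has none of the four Wilson types. A non-canonical automorphism of $BX$ is encoded by a pair of bijections $(\alpha, \beta)$ of $\ZZ_n$ satisfying $\alpha(v) - \beta(w) \in S \iff v - w \in S$, and lying outside $\Aut X \times S_2$. Using the additional sufficient conditions for instability developed in \cite[\S 3]{HujdurovicMitrovicMorris} together with the assumption that no Wilson type holds, I would show that the existence of such a pair forces $(n, S)$ into one of a short list of algebraic configurations --- typically, $S$ must contain several full cosets of a proper subgroup $H < \ZZ_n$ together with prescribed residue patterns. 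A pigeonhole bound on the residues of $S$ modulo a divisor of $n$ associated with each configuration then yields the contradiction $|S| \geq 8$.

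The main obstacle is the valency-$7$ case: here $n/2 \in S$ can substitute for one element of a ``forced'' coset of $H$, loosening the pigeonhole just enough that a careful sub-case analysis is required, organized by the residue of $n/2$ modulo the relevant subgroup. The existence of the valency-$8$ counterexample $\Cay(\ZZ_{24}, \{\pm 2, \pm 3, \pm 8, \pm 9, \pm 10\})$ shows that the bound $7$ is sharp, so any slack in the estimates would be fatal; accordingly, the delicate step is to pinpoint, for each non-Wilson configuration and each admissible residue of $n/2$, the specific additional element of $\ZZ_n$ that must lie in $S$, thereby pushing the valency past $7$.
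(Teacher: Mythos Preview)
Your proposal is a plan rather than a proof, and the plan has a structural gap. The sufficient conditions for instability in \cite{HujdurovicMitrovicMorris} (and Wilson's conditions) point the wrong way: they say ``if $S$ has such-and-such form, then $X$ is unstable'', whereas here you need the converse. There is no known complete list of instability-producing configurations for circulants, so your step ``the existence of such a pair forces $(n,S)$ into one of a short list of algebraic configurations'' is precisely the content of the theorem, not something you can import from elsewhere. Your pigeonhole sketch never specifies which divisor of~$n$ governs the residues or why the absence of a Wilson type forces $S$ to contain full cosets of any subgroup~$H$; without that, there is no bound on $|S|$ to derive. (Incidentally, the graph $\Cay(\ZZ_{24}, \{\pm2, \pm3, \pm8, \pm9, \pm10\})$ you cite as a valency-$8$ counterexample has valency~$10$; the paper's valency-$8$ example is a different family.)

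The paper's argument is a direct structural analysis of $\Aut BX$, valency by valency, and is quite different from what you outline. The recurring tools are: (i)~if $BX$ is a normal Cayley graph then $X$ has Wilson type~\pref{Wilson-C4} (\cref{NormalCover}); (ii)~if an automorphism of $BX$ sends an $s$-edge to a $t$-edge with $|s| \in \{2,3,4,6\}$, then $S$ must contain \emph{every} generator of $\langle t\rangle$ (\cref{order2orbit}), which sharply constrains the orders appearing in~$S$; (iii)~\cref{2S'-BX} produces $\Aut BX$-invariant subgraphs and hence block systems; and (iv)~a short list of small graphs verified stable by computer (\cref{small}). For odd valency the $n/2$-edges either form an invariant set (reducing to the next valency down) or interact with~(ii). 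The valency-$6$ and valency-$7$ cases each require several pages of subcase analysis organised around these tools, not around a residue pigeonhole.
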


We actually prove more precise (but more complicated) results, which show that all of the graphs in \cref{lessthan8} belong to certain explicit families (and it is easy to see that the graphs in each family have a specific Wilson type). For example:

\begin{thmref}{val5}
\begin{thm}
A circulant graph\/ $\Cay(\ZZ_n, S)$ of valency~$5$ is unstable if and only if either it is trivially unstable, or it is one of the following:
\noprelistbreak
	\begin{enumerate}
	\item
	$\Cay(\ZZ_{12k},\{ \pm s, \pm 2k, 6k \})$ with $s$~odd, 
	which has Wilson type~\pref{Wilson-C1}.
	\item
	$\Cay(\ZZ_8,\{ \pm 1, \pm 3, 4 \})$, 
	which has Wilson type~\pref{Wilson-C3},
	\end{enumerate}
\end{thm}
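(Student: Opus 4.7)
The plan is to leverage the strong structural constraint imposed by valency~$5$. Since $|S|=5$ is odd and $S=-S$, exactly one element of~$S$ is its own inverse, so $n$ is necessarily even and $n/2\in S$. Writing $S=\{\pm a,\pm b,n/2\}$ with $a,b\in\ZZ_n\setminus\{0,n/2\}$ and $a\neq\pm b$, I would first characterize the trivially unstable cases: disconnectedness is equivalent to $\gcd(a,b,n/2)>1$; bipartiteness forces $n\equiv 2\pmod{4}$ together with $a$ and $b$ both odd; and the twin-vertex condition reduces, after a short argument, to the case $h=n/2$ with $b\equiv a+n/2\pmod{n}$ up to sign. So we may assume none of these hold.

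For the main direction, assume $X$ is nontrivially unstable, and split on the parities of $a$ and $b$. In the mixed-parity case, say $a$ odd and $b$ even, we have $S_e=\{\pm b\}$ or $\{\pm b,n/2\}$ according to whether $n\equiv 2$ or $0\pmod{4}$; I would look for a nonzero $h\in 2\ZZ_n$ with $h+S_e=S_e$. When $|S_e|=3$ the only candidates are $h\in\{\pm(n/2\pm b)\}$, and demanding that the translation genuinely permute~$S_e$ forces the relation $n=6b$; writing $b=2k$ gives $n=12k$, which is precisely family~(1). The subcase $|S_e|=2$ reduces to trivial instability after short checks. If instead both $a$ and $b$ are odd, then $n\equiv 0\pmod{4}$ by non-bipartiteness, $S_e=\{n/2\}$, so Wilson type~\pref{Wilson-C1} is unavailable; I would then analyze~\pref{Wilson-C2},~\pref{Wilson-C3}, and~\pref{Wilson-C4} in turn, showing that only the sporadic example $\Cay(\ZZ_8,\{\pm 1,\pm 3,4\})$ of family~(2) survives the combinatorial restrictions imposed by $|S|=5$. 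The case in which both $a$ and $b$ are even collapses to disconnectedness after quotienting by $\gcd(a,b,n/2)$.

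The main obstacle is ensuring that \emph{every} nontrivially unstable valency-$5$ circulant is captured by a Wilson condition, since Wilson's original conjecture is known to fail at higher valency. For this I would invoke the non-Wilson instability constructions catalogued in~\cite{HujdurovicMitrovicMorris} and verify that each of them imposes numerical or structural constraints on~$S$ that cannot be met when $|S|=5$. Once this is done, the backward direction is a routine verification that every graph in families~(1) and~(2) is connected, non-bipartite, twin-free, and realizes the stated Wilson type.
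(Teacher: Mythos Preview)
Your proposal has a genuine gap at exactly the point you yourself flag as the ``main obstacle.'' You never prove that a nontrivially unstable valency-$5$ circulant must satisfy \emph{some} Wilson condition; you only classify which valency-$5$ circulants satisfy each of \pref{Wilson-C1}--\pref{Wilson-C4}. Your proposed remedy---ruling out the non-Wilson instability constructions from \cite{HujdurovicMitrovicMorris}---does not close the gap, because those constructions are \emph{sufficient} conditions for instability, not an exhaustive list. There is no theorem saying ``if $X$ satisfies none of Wilson's conditions and none of the conditions in \cite{HujdurovicMitrovicMorris}, then $X$ is stable''; indeed, the failure of Wilson's conjecture shows precisely that no such complete catalogue is currently known. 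So even if you verified that no valency-$5$ circulant satisfies any of those extra conditions, you would still not have shown that nontrivial instability forces a Wilson type.

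The paper's argument is structurally quite different and does not pass through Wilson's conditions at all in the forward direction. It works directly with $\Aut BX$ via the dichotomy: either the set of $\nh$-edges is invariant under $\Aut BX$, or it is not. In the invariant case (\cref{val5nhInvt}), every automorphism of $BX$ is an automorphism of $BX_0$ where $X_0=\Cay(\ZZ_n,\{\pm a,\pm b\})$ has valency~$4$; one then invokes the valency-$4$ classification (\cref{val4}, \cref{Val4CoverIsNormal}) together with normality arguments (\cref{OddValencyNotNormal}, \cref{ReduceNonNormal}) and the short list of non-normal bipartite valency-$4$ circulants (\cref{BipNonnormalVal4}) to isolate $\Cay(\ZZ_8,\{\pm1,\pm3,4\})$. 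In the non-invariant case (\cref{val5nhNotInvt}), some automorphism sends an $\nh$-edge to an $a$-edge, and \cref{order2orbit} then forces $S$ to contain every generator of $\langle a\rangle$; since $|S|=5$ this bounds $\phi(|a|)\le 4$, and a finite case analysis on $|a|$ yields the $\ZZ_{12k}$ family. The Wilson types are identified only \emph{after} the graphs have been pinned down, not used as the mechanism for finding them.
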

\end{thmref}

The following example shows that the constant~$7$ in \cref{lessthan8} cannot be increased:

\begin{eg}[{\cite[Eg.~3.10]{HujdurovicMitrovicMorris}}] 
Let $n \coloneqq 3 \cdot 2^\ell$, where $\ell \ge 4$ is even, and let
	\[ S \coloneqq \left\{ \pm 3, \pm 6, \pm \frac{n}{12}, \frac{n}{2} \pm 3 \right\} . \]
Then the circulant graph $X \coloneqq \Cay(\ZZ_n, S)$ has valency~$8$ and is nontrivially unstable, but does not have a Wilson type.
\end{eg}

Here is an outline of the paper.
After this introduction come two sections of preliminaries:
\cref{PrelimSect} presents material from the theory of normal Cayley graphs and some other miscellaneous information that will be used;
\cref{StableSect} lists some conditions that imply $X$ is not unstable.
The remaining sections each find the nontrivially unstable circulant graphs of a particular valency (or valencies). 
Namely, \cref{val4Sect} considers valencies~$\le 4$, 
whereas \cref{val5Sect,val6Sect,val7Sect} each consider a single valency ($5$, $6$, or~$7$, respectively).
The main results are 
	\cref{val3} (valency~$\le 3$), 
	\cref{val4} (valency~$4$), 
	\cref{val5} (valency~$5$), 
	\cref{val6,explicit6} (valency~$6$), 
	and 
	\cref{val7} (valency~$7$). 

\begin{ack}
The work of Ademir Hujdurovi\'c  is supported in part by the Slovenian Research Agency (research program P1-0404 and research projects N1-0102, N1-0140, N1-0159, N1-0208, J1-1691, J1-1694, J1-1695, J1-2451 and J1-9110).
\end{ack}

\section{Preliminaries} \label{PrelimSect}

For ease of reference, we repeat a basic assumption from the first paragraph of the introduction:

\begin{assump} \label{GraphsAreSimple}
All graphs in this paper are finite, undirected, and simple (no loops or multiple edges).
\end{assump}

\subsection{Basic definitions and notation}

For simplicity (and because it is the only case we need), the following definition is restricted to abelian groups, even though the notions easily generalize to nonabelian groups.

\begin{defn} \label{CayleyDefn}
Let $S$ be a subset of an abelian group~$G$, such that $-s \in S$ for all $s \in S$. 
	\begin{enumerate}
	\item The \emph{Cayley graph} $\Cay(G, S)$ is the graph whose vertices are the elements of~$G$, and with an edge from $v$ to~$w$ if and only if $w = v + s$ for some $s \in S$ (cf.\ \cite[\S1]{Li-IsoSurvey}).
	\item \label{CayleyDefn-S1}
	For $g \in G$, we let $\tilde g = (g,1)$. 
	\item \label{CayleyDefn-BX}
	Note that if $X = \Cay(G, S)$, and we let $\widetilde S = \{\, \tilde s \mid s \in S \,\}$, then
		\[ B X = \Cay \bigl( G \times \ZZ_2, \widetilde S \bigr) . \]
	\item 
	For $g \in G$, we say that an edge $\{u,v\}$ of the complete graph on $G\times \ZZ_2$ is a \emph{$g$-edge} if $v = u \pm \tilde g$. Note that $\{u,v\}$ is an edge of $BX$ if and only if it is an $s$-edge for some $s\in S$.
	\end{enumerate}
\end{defn}

\begin{notation} \label{nh}
For convenience, proofs will sometimes use the following abbreviation:
	\[ \nh = n/2 .\]
\end{notation}

We will employ the following fairly standard notation:

\begin{notation} \label{BasicNotation}
\leavevmode\noprelistbreak
	\begin{enumerate}
	\item $K_n$ is the complete graph on~$n$ vertices.
	\item $\overline{K_n}$ is the \emph{complement} of~$K_n$. It is a graph with $n$~vertices and no edges.
	\item $K_{n,n}$ is the complete bipartite graph with $n$~vertices in each bipartition set.
	\item $C_n$ is the cycle of length~$n$.
	\item For $a \in \ZZ_n$, we use $|a|$ to denote the \emph{order} of~$a$ as an element of the cyclic group~$\ZZ_n$. So
		\[ |a| = \frac{n}{\gcd(n, a)} . \]
	It does \emph{not} denote the absolute value of~$a$.
	\item $\phi$ denotes the Euler's totient function.
	\end{enumerate}
\end{notation}

Throughout the paper various notions of graph products will be used. We now recall their definitions and notation.

\begin{defn}[{\cite[pp.~35, 36, and~43]{ProductHandbook}}] \label{ProductsDefn}
Let $X$ and~$Y$ be graphs.
	\begin{enumerate}
	\item The \emph{direct product} $X \times Y$ is the graph with $V(X \times Y) = V(X) \times V(Y)$, such that $(x_1, y_1)$ is adjacent to $(x_2, y_2)$ if and only if
		\[ \text{$(x_1,x_2) \in E(X)$ \ and \ $(y_1,y_2) \in E(Y)$.} \]
	\item The \emph{Cartesian product} $X \cartprod Y$ is the graph with $V(X \times Y) = V(X) \times V(Y)$, such that $(x_1, y_1)$ is adjacent to $(x_2, y_2)$ if and only if either
		\begin{itemize}
		\item $x_1 = x_2$ and $(y_1,y_2) \in E(Y)$, or
		\item $y_1 = y_2$ and $(x_1,x_2) \in E(X)$.
		\end{itemize}
	\item \label{ProductsDefn-wreath} The \emph{wreath product} $X \wr Y$ is the graph that is obtained by replacing each vertex of~$X$ with a copy of~$Y$. (Vertices in two different copies of~$Y$ are adjacent in $X \wr Y$ if and only if the corresponding vertices of~$X$ are adjacent in~$X$.)
	 This is called the \emph{lexicographic product} in~\cite[p.~43]{ProductHandbook} (and denoted $X \circ Y$).
	\end{enumerate}
\end{defn}

\begin{defn}[Kotlov-Lovász {\cite{KotlovLovasz-twinfree}}] \label{TwinFreeDefn} 
A graph~$X$ is \emph{twin-free} if there do not exist two distinct vertices $v$ and~$w$, such that $N_X(v) = N_X(w)$, where $N_X(v)$ denotes the set of neighbors of~$v$ in~$X$.
\end{defn}

The notion of a ``block'' (or ``block of imprimitivity'') is a fundamental concept in the theory of permutation groups, but we need only the following special case:

\begin{defn}[cf.\ {\cite[pp.~12--13]{DixonMortimer}}] \label{BlockDefn}
Let $G$ be a finite abelian group. Let $X = \Cay(G, S)$ be a Cayley graph. A nonempty subset~$\mathcal{B}$ of $V(X)$ is a \emph{block} for the action of $\Aut X$ if, for every $\alpha \in \Aut X$, we have 
	\[ \text{either \ $\alpha(\mathcal{B}) = \mathcal{B}$ \ or \ $\alpha(\mathcal{B}) \cap \mathcal{B} = \emptyset$.} \]
It is easy to see that this implies $\mathcal{B}$ is a coset of some subgroup~$H$ of $G$. Then every coset of~$H$ is a block. \refnote{BlockDefn-H} Indeed, the action of $\Aut X$ permutes these cosets, so there is a natural action of $\Aut X$ on the set of cosets.
\end{defn}

\begin{rem}
The most important instance of \cref{BlockDefn} for us will be the case of canonical bipartite double covers. Indeed, if $X=\Cay(\ZZ_n,S)$ is a circulant graph then its canonical bipartite double cover $BX=\Cay(\ZZ_n\times \ZZ_2,\widetilde S)$, defined in \fullcref{CayleyDefn}{BX}, is a Cayley graph. Therefore, every block for the action of $\Aut BX$ is a coset of some subgroup $H$ of $\ZZ_n\times\ZZ_2$.
\end{rem}

\subsection{Normal Cayley graphs}

\begin{defn}
\label{CayleyDefn-Normal}(M.-Y.\,Xu \cite[Defn.~1.4]{Xu-AutAndIso})
For each $g \in G$, it is easy to see that the translation~$g^*$, defined by $g^*(x) = g + x$, is an automorphism of $\Cay(G, S)$. The set
	\[ G^* = \{\, g^* \mid g \in G \,\} \]
is a subgroup of $\Aut \Cay(G,S)$. (It is often called the \emph{regular representation} of~$G$.) We say that $\Cay(G, S)$ is \emph{normal} if the subgroup~$G^*$ is normal in $\Aut \Cay(G,S)$. This means that if $\varphi$ is automorphism of the graph $\Cay(G, S)$, and $\varphi(0) = 0$, then $\varphi$ is an automorphism of the group~$G$.
\end{defn}

\begin{lem} \label{NormalCover}
Assume $X = \Cay(\ZZ_n, S)$ is connected and unstable. If the Cayley graph $BX$ is normal, then $X$ has Wilson type~\pref{Wilson-C4}.
\end{lem}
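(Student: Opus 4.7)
The plan is to use the normality hypothesis to turn an instability witness into a group automorphism of $\ZZ_n \times \ZZ_2$, and then read the Wilson type~\pref{Wilson-C4} multiplier directly off its matrix form. First I observe that the regular representation $(\ZZ_n \times \ZZ_2)^*$ is already a subgroup of $\Aut X \times S_2$, since translation by $(a, b)$ on $V(BX) = \ZZ_n \times \ZZ_2$ agrees with $(\tau_a, \sigma^b) \in \Aut X \times S_2$, where $\tau_a$ is translation of $X$ by $a$ and $\sigma$ is the non-identity element of~$S_2$. Instability thus yields $\varphi \in \Aut BX \setminus (\Aut X \times S_2)$, which I may compose with a translation so that $\varphi(0, 0) = (0, 0)$. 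Normality of $BX$ then forces $\varphi$ to be an automorphism of the abelian group $\ZZ_n \times \ZZ_2$ that preserves $\widetilde{S} = S \times \{1\}$.

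Next I would write $\varphi(x, y) = (Ax + By,\, Cx + Dy)$ with $A \in \ZZ_n$, $B \in \{0, n/2\}$, and $C, D \in \ZZ_2$, and expand the condition $\varphi(s, 1) = (As + B,\, Cs + D) \in S \times \{1\}$ for every $s \in S$. The second-coordinate requirement $Cs + D \equiv 1 \pmod 2$ splits into two cases. If $C = 1$, then every $s \in S$ has the same parity; since $X$ is connected $S$ generates $\ZZ_n$, so only the odd parity survives and $X$ is bipartite --- a degenerate subcase addressed below. In the main case $C = 0$, bijectivity of $\varphi$ forces $D = 1$ and $A \in \ZZ_n^\times$ (since $\varphi(1, 0) = (A, 0)$ must have the same order~$n$ as $(1, 0)$), so $\varphi(x, y) = (Ax + By, y)$ with $AS + B = S$. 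If $B = 0$, then $\varphi = (\mathrm{mult}_A, \mathrm{id}) \in \Aut X \times S_2$, contradicting our choice; hence $B = n/2$ and $(n/2) + AS = S$ with $A \in \ZZ_n^\times$, which is exactly Wilson type~\pref{Wilson-C4}.

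The main obstacle is the bipartite subcase $C = 1$. The intended dispatch is to observe that normality of $BX = 2X$ as a Cayley graph of $\ZZ_n \times \ZZ_2$ is very restrictive: conjugating a diagonal translation $(\tau_a, \tau_a)$ in the regular representation by an element $(\alpha, \mathrm{id}) \in \Aut X \times \Aut X \leq \Aut(2X)$ produces $(\alpha^{-1} \tau_a \alpha,\, \tau_a)$, which lands in the regular representation only if $\alpha$ commutes with every $\tau_a$. Since the centralizer of a regular cyclic subgroup in the symmetric group is the subgroup itself, this collapses $\Aut X$ to $\ZZ_n^*$, making $X$ a graphical regular representation of $\ZZ_n$; for a connected circulant graph this only leaves a small boundary case to verify directly.
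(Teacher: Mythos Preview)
Your main argument (the case $C=0$) is correct and is essentially the paper's proof rewritten in block--matrix language. The paper reaches the same conclusion by a more conjugacy-theoretic route: it notes that instability means the translation $(0,1)^*$ is not central in $\Aut BX$, argues that $\ZZ_n\times\{0\}$ is normal in $\Aut BX$ (as the subgroup of the normal regular representation that fixes each bipartition set), so that $(\nh,0)^*$ \emph{is} central, and hence the only involution $(0,1)^*$ can be conjugate to is $(\nh,1)^*$. The conjugating element, translated to fix the origin, is then a group automorphism sending $(0,1)$ to $(\nh,1)$, and one reads off the multiplier exactly as you do. Your matrix parametrisation with $B\in\{0,n/2\}$ and the elimination of $B=0$ via $\varphi\in\Aut X\times S_2$ is the same mechanism, just unpacked differently; note in passing that the case $n$ odd is disposed of automatically in your set-up, since then $B=0$ and $C=0$ are forced.

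On the bipartite subcase: you are actually more careful here than the paper. The paper's step ``$\ZZ_n\times\{0\}$ is normal because it is the bipartition-preserving part of the regular representation'' silently uses that $\Aut BX$ respects the partition $\{\ZZ_n\times\{0\},\ZZ_n\times\{1\}\}$, which holds precisely when $BX$ is connected, i.e.\ when $X$ is non-bipartite. Your GRR reduction for bipartite $X$ is valid: normality of $(\ZZ_n\times\ZZ_2)^*$ inside $\Aut(2X)\cong\Aut X\wr S_2$ forces every $\alpha\in\Aut X$ to centralise $\ZZ_n^*$, hence $\Aut X=\ZZ_n^*$, which for a circulant with the inversion automorphism leaves only $n\le 2$. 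Be aware, though, that the residual case $X=K_2$ does \emph{not} satisfy~\pref{Wilson-C4}, so ``verify directly'' will not close it; the lemma as literally stated needs a non-bipartiteness (or $n\ge 3$) hypothesis, and indeed every application in the paper is to nontrivially unstable graphs.
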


\begin{proof}
Since $X$ is unstable, we know that $(0,1)$ is not central in $\Aut BX$. So it is conjugate to some other element of order~$2$. However, since $BX$ is normal, we know that $\ZZ_n \times \ZZ_2$ is a normal subgroup of $\Aut BX$. Therefore, $(0,1)$ cannot be the only element of order~$2$ in $\ZZ_n \times \ZZ_2$; so $n$ is even.  (This is also immediate from \cref{OddCirculant}.)

Also note that $\ZZ_n \times \{0\}$ is normal in $\Aut BX$, because it consists of the elements of the normal subgroup $\ZZ_n \times \ZZ_2$ that preserve each bipartition set. Since $(\nh,0)$ is the unique element of order~$2$ in this normal subgroup, it must be central in $\Aut BX$.

Now, since $(0,1)$, $(\nh,0)$, and~$(\nh,1)$ are the only elements of order~$2$ in $\ZZ_n \times \ZZ_2$, and $(\nh,0)$ is not conjugate to any other elements, we see that $(0,1)$ must be conjugate to~$(\nh,1)$, by some $\alpha \in \Aut BX$. 

Since $BX$ is normal, $\alpha$ must be a group automorphism. Hence, there is some $g \in \ZZ_n^\times$, such that $\alpha(s, 0) = (gs, 0)$ for all~$s$. Since $\alpha(0,1) = (\nh, 1)$, this implies $\alpha(s, 1) = (gs + \nh, 1)$. Since $S \times \{1\}$ is $\alpha$-invariant, this implies that $S$ is invariant under the map $s \mapsto gs + \nh$, which is precisely the condition of Wilson type~\pref{Wilson-C4}.
\end{proof}

\begin{cor} \label{OddValencyNotNormal}
If $X = \Cay(\ZZ_n, S)$ is a nontrivially unstable circulant graph of odd valency, then the Cayley graph~$BX$ is not normal.
\end{cor}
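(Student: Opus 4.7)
The plan is a short proof by contradiction that combines \cref{NormalCover} with the elementary observation that an odd-valency circulant on~$\ZZ_n$ must contain the unique involution~$\nh$. Since $X$ is nontrivially unstable it is in particular connected and unstable, so the hypotheses of \cref{NormalCover} are satisfied as soon as we assume $BX$ is normal, and Wilson type~\pref{Wilson-C4} becomes available.

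First I would exploit the parity of the valency. Because $|S|$ is odd and $S = -S$, the involution $s \mapsto -s$ on the finite set~$S$ must fix some element~$s_0$, meaning $2s_0 = 0$ in~$\ZZ_n$. Since \cref{GraphsAreSimple} forbids loops we have $0 \notin S$, so $s_0 \ne 0$; hence $n$ is even and $s_0 = \nh \in S$. Now suppose for contradiction that $BX$ is normal. By \cref{NormalCover}, $X$ has Wilson type~\pref{Wilson-C4}, so there exists $m \in \ZZ_n^\times$ with $\nh + mS = S$. Testing this on $s_0 = \nh$ yields
\[
  \nh + m\,\nh = (1+m)\,\nh \in S .
\]
Because $n$ is even and $\gcd(m,n) = 1$, the unit~$m$ is odd, so $1 + m$ is even, forcing $(1+m)\,\nh \equiv 0 \pmod{n}$. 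Thus $0 \in S$, contradicting the absence of loops.

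I do not anticipate any genuine obstacle. The only step requiring care is recognizing that $\nh \in S$ is forced by the parity of the valency; once that is in hand, Wilson type~\pref{Wilson-C4} applied to the single element~$\nh$ immediately produces the forbidden loop $0 \in S$ and closes the argument.
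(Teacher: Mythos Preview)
Your proof is correct and follows essentially the same route as the paper: both assume $BX$ is normal, invoke \cref{NormalCover} to obtain $\nh + mS = S$ with $m \in \ZZ_n^\times$, observe that odd valency forces $\nh \in S$ and that $m$ is odd, and then apply the map to the element~$\nh$ to deduce $0 \in S$, contradicting simplicity. Your justification that $\nh \in S$ via the fixed point of $s \mapsto -s$ is slightly more explicit than the paper's one-line remark, but the argument is otherwise identical.
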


\begin{proof}
If $BX$ is normal, then the \cref{NormalCover} implies $\nh + gS = S$, for some $g \in \ZZ_n^\times$. Also, since $X$ has odd valency, we know that $\nh \in S$. Also, we know that $g$ is odd (because $n$ is even). Therefore
	\[ 0 = \nh + \nh =  \nh + g\nh \in \nh + gS = S .\]
This contradicts our standing assumption that all graphs are simple (no loops).
\end{proof}

\begin{lem} \label{ReduceNonNormal}
Let $X = \Cay(\ZZ_n,S)$ be a nontrivially unstable circulant graph of odd valency, and let  $X_0 = \Cay(\ZZ_n,S\setminus\{n/2\})$, so
	\[ BX_0 = \Cay(\ZZ_n\times \ZZ_2,(S\setminus\{n/2\})\times \{1\}) . \]
If every automorphism of $BX$ maps $n/2$-edges to $n/2$-edges, then $BX_0$ is not a normal Cayley graph. Moreover, if $X_0$ is bipartite, then $X_0$ is not a normal Cayley graph.
\end{lem}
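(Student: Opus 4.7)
My plan is to derive both non-normality statements by contradiction, using an automorphism $\alpha \in \Aut BX \setminus (\Aut X \times S_2)$ that exists by the instability of $X$.

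By hypothesis $\alpha$ preserves the set of $n/2$-edges, and hence also its complement in $E(BX)$, so $\alpha \in \Aut BX_0$. Composing with a suitable translation from $\ZZ_n \times \ZZ_2$ (which lies in $\Aut X \times S_2$), I may assume $\alpha(0,0)=(0,0)$; and since $BX$ is connected bipartite with a unique bipartition ($X$ being nonbipartite), $\alpha$ then automatically preserves that bipartition. Now suppose $BX_0$ is normal; then $\alpha$ is a group automorphism of $\ZZ_n \times \ZZ_2$, forced by the bipartition condition to take the form $\alpha(v,0) = (gv,0)$ and $\alpha(v,1) = (gv+h,1)$ for some $g \in \ZZ_n^\times$ and $h \in \{0,n/2\}$. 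Requiring $\alpha \in \Aut BX$ gives $gS+h=S$, whereas $\alpha \in \Aut BX_0$ gives $g(S\setminus\{n/2\}) + h = S\setminus\{n/2\}$; comparing these forces $g\cdot(n/2)+h = n/2$, and since $g$ is odd we have $g \cdot (n/2) = n/2$, so $h=0$. But then $gS = S$, so $v \mapsto gv$ is an automorphism of $X$ and $\alpha \in \Aut X \times S_2$, a contradiction that establishes the first statement.

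For the ``moreover'' part, assume additionally that $X_0$ is bipartite. The bipartition of $X_0$ is forced to be $V_0 = 2\ZZ_n$ and $V_1 = \ZZ_n \setminus V_0$ (the unique index-$2$ subgroup of $\ZZ_n$ and its nontrivial coset), with $S\setminus\{n/2\} \subseteq V_1$. If $n/2$ were in $V_1$ then $S$ itself would lie in $V_1$, making $X$ bipartite, contrary to hypothesis; hence $n/2 \in V_0$. Consequently $BX_0$ has exactly two connected components, $C_1 = (V_0 \times \{0\}) \cup (V_1 \times \{1\})$ and $C_2 = (V_1 \times \{0\}) \cup (V_0 \times \{1\})$, each isomorphic to $X_0$ via $(v,\epsilon) \mapsto v$. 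Taking the same normalized $\alpha$ and writing $\alpha(v,0)=(\beta(v),0)$, $\alpha(v,1)=(\gamma(v),1)$, its restrictions to the two components correspond to automorphisms $\beta_1, \beta_2 \in \Aut X_0$ with $\beta_1(0)=0$, where $\beta_1$ agrees with $\beta$ on $V_0$ and with $\gamma$ on $V_1$, while $\beta_2$ agrees with $\gamma$ on $V_0$ and with $\beta$ on $V_1$. If $X_0$ were normal, these automorphisms would have the form $\beta_1(v) = \phi_1 v$ and $\beta_2(v) = \phi_2 v + u$ for some units $\phi_1, \phi_2 \in \ZZ_n^\times$ and some $u \in V_0$. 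The preservation of $n/2$-edges translates to $\gamma(v+n/2) = \beta(v)+n/2$; plugging in $v \in V_0$ (and using $\phi_i \cdot (n/2) = n/2$) yields $u=0$ and $\phi_2 - \phi_1 \in \{0,n/2\}$, while plugging in $v \in V_1$ then sharpens this to $\phi_1 = \phi_2$. Consequently $\alpha(v,\epsilon) = (\phi_1 v, \epsilon) \in \Aut X \times S_2$, once again contradicting the choice of $\alpha$. Hence $X_0$ is not normal.

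The main obstacle is the final computation in the ``moreover'' part: one must correctly parametrize how the single automorphism $\alpha$ of the disconnected graph $BX_0$ splits across its two components under the isomorphism $BX_0 \cong X_0 \sqcup X_0$, and then combine the resulting piecewise descriptions of $\beta$ and $\gamma$ on $V_0$ and $V_1$ with the rigid constraint $\gamma(v+n/2) = \beta(v)+n/2$ coming from the lemma's hypothesis.
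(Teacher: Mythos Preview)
Your proof is correct. The first part is essentially the paper's argument unfolded: the paper simply observes that $\Aut BX$ embeds in $\Aut BX_0$ (by the hypothesis on $n/2$-edges), so normality of $BX_0$ would force normality of $BX$, contradicting \cref{OddValencyNotNormal}; you carry out the same reasoning explicitly by parametrizing the group automorphism and deducing $h=0$.

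For the ``moreover'' part, however, your route differs from the paper's. You split $BX_0$ into its two components, pull back to two automorphisms $\beta_1,\beta_2$ of $X_0$, and then reconcile their affine forms via the constraint $\gamma(v+n/2)=\beta(v)+n/2$ on both parity classes. The paper instead works with a single component: it identifies the component of $BX_0$ through $(0,0)$ with $\Cay(H, (S\setminus\{n/2\})\times\{1\})$ where $H=\langle(1,1)\rangle\cong\ZZ_n$, notes that the unique element of order~$2$ in $H$ is $(n/2,0)$ (since $n/2$ is even), and concludes that any group automorphism of $H$ fixes $(n/2,0)$; the $n/2$-edge at $(n/2,0)$ then forces $(0,1)$ to be fixed, giving stability via \cref{StableIffStabilizer}. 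The paper's argument is shorter and avoids tracking two components, while yours is more hands-on and makes the affine structure fully explicit; both reach the same contradiction. One small point you glide over (as does the paper, with a one-line remark) is that $X_0$ is connected under the bipartiteness hypothesis---this is needed for the bipartition to be the unique one $2\ZZ_n \cup (1+2\ZZ_n)$, and follows because a disconnected $X_0$ would have components of odd order $n/2$ and hence could not be bipartite.
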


\begin{proof}
By the assumption on $\nh$-edges, it follows that every automorphism of $BX$ induces an automorphism of $BX_0$. If $BX_0$ is normal, it follows that $BX$ is also normal, contradiction with Corollary~\ref{OddValencyNotNormal}. We conclude that $BX_0$ is non-normal.

Suppose now that $X_0$ is bipartite. It is not difficult to see that $X_0$ is connected, since $X$ is connected. It follows that every element of $S\setminus\{\nh\}$ is odd. Since $X$ is nonbipartite, it follows that $\nh$ is even. 
Suppose that $X_0$ is normal Cayley graph. Observe that $BX_0$ is isomorphic to the disjoint union of two copies of $X_0$, and that the connected component containing the vertex $(0,0)$ is $X_1=\Cay(H,(S\setminus\{\nh\})\times \{1\})$, where $H=\langle (1,1)\rangle\leq \ZZ_n \times \ZZ_2$. The map $\theta:\ZZ_n \to H$ defined by $\theta(k)=(k,k~\mod~2)$ is an isomorphism between $X_0$ and $X_1$. Then $X_1$ is a normal Cayley graph on $H$. Let $\varphi$ be an automorphism of $BX$ that fixes $(0,0)$. Then $\varphi$ is also an automorphism of $BX_0$, and consequently of its connected component $X_1$. Since $X_1$ is normal, it follows that the action of $\varphi$ on $H$ is an automorphism of the group $H$, hence $\varphi$ fixes the unique element of order 2 in $H$, which is $(\nh,0)$. Observe that the $\nh$-edge in $BX$ incident with $(\nh,0)$, must also be fixed, hence $(0,1)$ is fixed. This shows that $X$ is stable, a contradiction. The obtained contradiction shows that $X_0$ is non-normal.
\end{proof}

\begin{prop}[Baik-Feng-Sim-Xu {\cite[Thm.~1.1]{BaikFengSimXu4}}] \label{4cyclenormal}
Let $\Cay(G,S)$ be a connected Cayley graph on an abelian group~$G$. Assume, for all $s,t,u,v \in S$:
	\[ s + t = u + v \neq 0 \implies \{s,t\} = \{u,v\} .\]
Then the Cayley graph $\Cay(G,S)$ is normal.
\end{prop}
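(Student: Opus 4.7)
My plan is to show that every graph automorphism $\varphi$ of $\Cay(G, S)$ with $\varphi(0) = 0$ is a group automorphism of $G$; by \cref{CayleyDefn-Normal}, this gives the normality of $\Cay(G, S)$. Since $\varphi$ fixes $0$, it permutes the neighborhood $S$. The core observation is a rigidity property of common neighbors: for any vertex $w \ne 0$, the common neighbors of $0$ and $w$ in $\Cay(G, S)$ form the set $\{u \in S : w - u \in S\}$, and the decomposition hypothesis forces this to equal the unique unordered pair $\{s, t\} \subseteq S$ with $s + t = w$, whenever such a pair exists. Applying $\varphi$ and invoking the same characterization at $\varphi(w)$ immediately yields $\varphi(s + t) = \varphi(s) + \varphi(t)$ for all $s, t \in S$ with $s + t \ne 0$ (including $s = t$, giving $\varphi(2s) = 2\varphi(s)$ when $2s \ne 0$).

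I would next verify the inverse property $\varphi(-s) = -\varphi(s)$ for every $s \in S$. First, $\varphi|_S$ preserves order-$2$ elements: if $s$ has order $>2$ then $2s \ne 0$, so $\varphi(2s) = 2\varphi(s) \ne 0$ and $\varphi(s)$ must also have order $>2$; by bijectivity, involutions map to involutions, for which the property is automatic. For $s$ of order $>2$, one pins down $\varphi(-s)$ via an auxiliary $t \in S$ with $t \pm s \ne 0$ (guaranteed by connectedness unless $S = \{s, -s\}$, in which case the graph is a cycle and the claim is classical): the already-established homomorphism property on the sums $t \pm s$ together with the bijectivity of $\varphi|_S$ is enough to force $\varphi(-s) = -\varphi(s)$.

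The homomorphism property is then extended from $S$ to all of $G$ by induction on the graph distance $d(0, y)$. Writing $y$ at distance $k+1$ as $y = y_1 + s_1$ with $d(0, y_1) = k$ and $s_1 \in S$, I use the $4$-cycle $y_1, y, y + g, y_1 + g$ for arbitrary $g \in S$: setting $a = \varphi(y+g) - \varphi(y_1+g)$ and $b = \varphi(y+g) - \varphi(y)$ (both in $S$ since $\varphi$ preserves edges), the induction hypothesis delivers the identity $a + \varphi(g) = b + \varphi(s_1)$, and the decomposition hypothesis then forces $\{a, \varphi(g)\} = \{b, \varphi(s_1)\}$, hence $b = \varphi(g)$ and $\varphi(y + g) = \varphi(y) + \varphi(g)$. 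Iterating over $g \in S$, which generates $G$ by connectedness, upgrades this to $\varphi(y + h) = \varphi(y) + \varphi(h)$ for all $y, h \in G$.

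The main obstacle is controlling the degenerate configurations where the $4$-cycle collapses—when $g \in \{\pm s_1\}$ or when the common value $a + \varphi(g)$ vanishes—since the decomposition hypothesis does not apply in those situations. The case $g = -s_1$ is precisely what the inverse property from the second paragraph settles, while $g = s_1$ reduces to computing $\varphi(y_1 + 2s_1)$, which can be dispatched by choosing an alternative predecessor of $y$ at distance $k$ or by appealing to the first paragraph applied to the sum $2s_1$. This case management is the technical heart of the argument, but it does not alter the overall structure.
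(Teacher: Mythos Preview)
Your approach is correct and follows the same strategy as the paper: both exploit the common-neighbor rigidity forced by the hypothesis and propagate the homomorphism property inductively through $G$. The paper organizes this more economically by proving the single implication ``$(x+u)^\sigma = x^\sigma + u^\sigma$ for all $u\in S$ $\Rightarrow$ $(x+u+v)^\sigma = x^\sigma+u^\sigma+v^\sigma$'' via the computation $\bigl((x{+}u)+S\bigr)\cap\bigl((x{+}v)+S\bigr)=\{x,\,x{+}u{+}v\}$; since $x$ is automatically one of the two common neighbors, the other is forced, and the case $v=-u$ simply yields $(-u)^\sigma=-u^\sigma$ as a special instance rather than requiring a separate argument. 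The diagonal case $u=v$ is then a one-line complement on the neighborhood $(x+u)+S$, exactly the ``leftover'' device you allude to for $g=s_1$.

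One point in your write-up is not fully justified as stated: the inverse property $\varphi(-s)=-\varphi(s)$ does not obviously follow from ``the homomorphism property on $t\pm s$ together with bijectivity of $\varphi|_S$''---those two identities alone do not pin down $\varphi(-s)$. A clean fix is to observe that $\varphi|_S$ induces a bijection on $2$-subsets of $S$ which, by your first paragraph, sends nonzero-sum pairs to nonzero-sum pairs, hence by complement sends each zero-sum pair $\{s,-s\}$ to a zero-sum pair; or simply adopt the paper's formulation, where this falls out of the main step with $x=0$, $u=s$, $v=-s$.
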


\begin{proof}
For the reader's convenience, we essentially reproduce the proof that is given in~\cite{BaikFengSimXu4}, but add a few additional details.

It suffices to show that for every $\sigma\in\Aut(\Cay(G,S))$, such that $\sigma(0) = 0$:
	\begin{align} \label{4cyclenormalpf-suffices}
	(s_1+\cdots+s_n)^\sigma = s_1^\sigma+\cdots+s_n^\sigma, \quad 
	\text{for $s_1,\ldots,s_n\in S$, $n\in\mathbb{N}$} 
	. \end{align}
To accomplish this, we will show that 
	\begin{align} \label{4cyclenormalpf-willshow}
	(x+u)^\sigma=x^\sigma+u^\sigma\implies (x+u+v)^\sigma=x^\sigma + u^\sigma + v^\sigma 
	\end{align}
for all $x\in G$, and $u,v\in S$ and then \pref{4cyclenormalpf-suffices} follows by induction on~$n$. 

\refstepcounter{caseholder} 

\begin{case} \label{4cyclenormalpf-neq}
Assume $u\neq v$. 
\end{case}
We claim that  
	\[ \bigl( (x+u)+S \bigr) \cap \bigl( (x+v)+S \bigr) = \{x,x+u+v\} . \]
The $\supseteq$ containment is trivial. If $y$ is an element of the left-hand side, then 
	\[ \text{$y = (x+u)+s=(x+v)+t$ \ for some $s,t\in S$.} \]
Using the assumption from the statement of the \lcnamecref{4cyclenormal} and the fact that $u\neq v$, we obtain that $u=t$ and $v=s$. In particular, $y=x+u+v$. 
This completes the proof of the claim.

Since $\sigma$ is an automorphism fixing the identity, we also know that 
	\[ \left((x+u)+S\right)^\sigma =(x+u)^\sigma+S^\sigma = (x^\sigma+u^\sigma)+S ,\]
where in the last step we apply the hypothesis of~\pref{4cyclenormalpf-willshow}. It now follows that:
	\begin{align*}
	\{x^\sigma,(x+u+v)^\sigma\}
	&=\bigl((x+u)+S\cap (x+v)+S\bigr)^\sigma
	\\&= \bigl( (x^\sigma+u^\sigma)+S \bigr) \cap \bigl( (x^\sigma+v^\sigma)+S \bigr)
	\\&= \{x^\sigma,x^\sigma+u^\sigma+v^\sigma\}
	. \end{align*}
In particular, $(x+u+v)^\sigma=x^\sigma+u^\sigma+v^\sigma$. 

\begin{case}
Assume $u = v$.
\end{case}
Since $\sigma$ is a graph automorphism, we know it is a bijection from $(x+u)+S$ onto $(x^\sigma + u^\sigma)+S$. \Cref{4cyclenormalpf-neq} shows it is also a bijection 
	\[ \text{from $\bigl( (x+u)+S \bigr)\setminus \{x+2u\}$ onto $\bigl( (x^\sigma+u^\sigma)+S \bigr) \setminus\{x^\sigma+2u^\sigma\}$.} \]
Hence, it must also hold that $(x+2u)^\sigma=x^\sigma+2u^\sigma$. This finishes the proof.
\end{proof}

\subsection{Miscellany}

The following result is a very special case of the known results on automorphism groups of Cartesian products. (Note that $C_4$ is isomorphic to $K_2 \cartprod K_2$.)

\begin{prop}[cf.\ {\cite[Thm.~6.10, p.~69]{ProductHandbook}}] \label{Aut(C4timesX)}
Let $X$ be a connected graph. If there does not exist a graph~$Y$\!, such that $X \cong K_2 \cartprod Y$\!, then 
	\[ \text{$\Aut(K_2 \cartprod X) = S_2 \times \Aut X$
	\ and \ 
	$\Aut(C_4 \cartprod X) = \Aut C_4 \times \Aut X$.} \]
\end{prop}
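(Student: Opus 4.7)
The plan is to invoke the two classical structure theorems for connected graphs under the Cartesian product, both found in \cite[Chap.~6]{ProductHandbook}: the Sabidussi--Vizing unique prime factorization theorem, and Imrich's description of the automorphism group of a connected Cartesian product in terms of the automorphism groups of the prime factors together with permutations of isomorphic prime factors.

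First, I would observe that $K_2$ is prime with respect to $\cartprod$ and that $C_4 \cong K_2 \cartprod K_2$ is the prime factorization of $C_4$ (so in particular $\Aut C_4 \cong S_2 \wr S_2$ of order $8$). Writing the prime factorization $X \cong X_1 \cartprod \cdots \cartprod X_k$, the hypothesis tells us that no $X_i$ is isomorphic to $K_2$. By uniqueness of prime factorization, the prime factorizations
\[ K_2 \cartprod X \cong K_2 \cartprod X_1 \cartprod \cdots \cartprod X_k \quad\text{and}\quad C_4 \cartprod X \cong K_2 \cartprod K_2 \cartprod X_1 \cartprod \cdots \cartprod X_k \]
place every $K_2$ factor in an isomorphism class that is disjoint from the isomorphism classes of all the $X_i$.

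Next, I would apply the automorphism-group theorem: every automorphism of a connected Cartesian product is obtained by first permuting isomorphic prime factors and then acting by automorphisms on each factor. In $K_2 \cartprod X$, the lone $K_2$ factor cannot be swapped with any $X_i$, and permutations of the prime factors $X_1, \dots, X_k$ combined with automorphisms of each $X_i$ assemble back to an arbitrary automorphism of $X$. Hence $\Aut(K_2 \cartprod X)$ splits as $\Aut K_2 \times \Aut X = S_2 \times \Aut X$. For $C_4 \cartprod X$, the two copies of $K_2$ may be permuted and each acted on individually, but still cannot be interchanged with any $X_i$; the contribution from the $K_2$ factors is exactly $S_2 \wr S_2 = \Aut C_4$, and the contribution from the $X_i$ is again $\Aut X$, giving $\Aut(C_4 \cartprod X) = \Aut C_4 \times \Aut X$.

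The only real point requiring attention is that the hypothesis on $X$ is precisely what rules out a $K_2$ factor of $X$ being swapped with the extra $K_2$ (or with one of the two $K_2$'s coming from $C_4$) under some automorphism; once this is observed, everything reduces to a direct application of the cited theorems, so no computational obstacle arises.
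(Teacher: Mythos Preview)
Your argument is correct and is precisely the intended one: the paper does not supply its own proof of this proposition but simply remarks that it is a special case of the cited automorphism theorem for Cartesian products (noting that $C_4 \cong K_2 \cartprod K_2$), and your sketch spells out exactly how that theorem is applied via unique prime factorization and the fact that the hypothesis forbids any prime factor of~$X$ from being isomorphic to~$K_2$.
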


\begin{lem}\label{nottwinfree}
Let $X=\Cay(\ZZ_n,S)$ be a connected circulant graph of order~$n$ such that $X$ is not twin-free, and let $d$ be the valency of~$X$.
	\begin{enumerate}
	\item \label{nottwinfree-Y}
	There is a connected circulant graph~$Y$ and some $m \ge 2$, such that $X \cong Y \wr \overline{K_m}$ and $d = \delta m$, where $\delta$ is the valency of~$Y$. 
	\item \label{nottwinfree-prime}
	If $d$ is prime, then $X \cong K_{d,d}$.
	\item \label{nottwinfree-d=4}
	If $d=4$, then $X$ is isomorphic either to $K_{4,4}$ or to $C_\ell \wr \overline{K_2}$ with $\ell = |V(X)|/2$. Moreover, the unique twin of\/~$0$ in the second case is $n/2$.
	\end{enumerate}
\end{lem}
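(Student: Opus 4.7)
The plan is to first identify, from the failure of twin-freeness, a canonical subgroup $H \leq \ZZ_n$ that will be the ``fiber'' of the wreath decomposition, and then read off parts~\pref{nottwinfree-Y}--\pref{nottwinfree-d=4} from the structure of~$H$.

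Setup: Define $H = \{\,h \in \ZZ_n : h + S = S\,\}$, the translational stabilizer of~$S$. This is a subgroup of $\ZZ_n$. Since $N_X(g) = g + S$ for every $g \in \ZZ_n$, an element $h \in \ZZ_n$ is a twin of~$0$ precisely when $h \ne 0$ and $h \in H$. By vertex-transitivity of $X$, the non-twin-free hypothesis produces at least one such~$h$, so $m \coloneqq |H| \ge 2$. Because twins must be non-adjacent (otherwise $h \in N_X(h)$, a loop) and $0 \notin S$, we get $H \cap S = \emptyset$. In particular, $S$ is a (disjoint) union of nontrivial cosets of $H$.

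For part~\pref{nottwinfree-Y}, set $\bar S = \{\,s + H : s \in S\,\} \subseteq \ZZ_n/H$ and $Y = \Cay(\ZZ_n/H, \bar S)$. Since $\ZZ_n/H$ is cyclic, $Y$ is a circulant graph, and its valency is $\delta = |S|/|H| = d/m$. The quotient map $\ZZ_n \to \ZZ_n/H$ is a surjective graph homomorphism $X \to Y$ (no edge collapses to a loop because $H \cap S = \emptyset$), so $Y$ is connected. To check $X \cong Y \wr \overline{K_m}$, use the $H$-cosets as the $m$-element blocks: two vertices in the same coset $g + H$ are non-adjacent in $X$ (as $H \cap S = \emptyset$), and two vertices $g_1 + h_1,\; g_2 + h_2$ in distinct cosets $g_1+H, g_2+H$ satisfy $(g_2+h_2)-(g_1+h_1) \in S$ iff $g_2-g_1 \in S$ (by $H$-invariance of~$S$), which holds iff the cosets are adjacent in~$Y$. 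This matches the wreath product description exactly.

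Part~\pref{nottwinfree-prime} is immediate from~\pref{nottwinfree-Y}: when $d$ is prime and $d = \delta m$ with $m \ge 2$, we must have $m = d$ and $\delta = 1$; the only connected graph of valency~$1$ is $K_2$, so $X \cong K_2 \wr \overline{K_d} = K_{d,d}$.

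Part~\pref{nottwinfree-d=4} follows similarly: $d = \delta m$ with $m \ge 2$ and $d = 4$ forces $(m,\delta) \in \{(4,1),(2,2)\}$. The first case gives $K_{4,4}$ as above. In the second case, $Y$ is a connected circulant of valency~$2$, hence a cycle $C_\ell$ with $\ell = n/|H| = n/2$, and $X \cong C_\ell \wr \overline{K_2}$. Here $|H| = 2$, so $H = \{0, h\}$ where $h$ is the unique element of order~$2$ in $\ZZ_n$, namely $h = n/2$; thus the only twin of $0$ is~$n/2$.

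I do not expect any real obstacle: the only thing requiring minor care is verifying that the $H$-coset structure of the Cayley graph matches the wreath-product definition used in \cref{ProductsDefn}\pref{ProductsDefn-wreath}, and that $Y$ inherits connectedness from~$X$ via the quotient map, both of which are routine.
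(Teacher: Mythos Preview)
Your proposal is correct and follows essentially the same approach as the paper's proof: both identify the subgroup $H$ consisting of $0$ together with all twins of~$0$ (you via the set-stabilizer $\{h : h+S=S\}$, the paper via the equivalence class of the twin relation, which is then recognized as a block and hence a coset of a subgroup), and then take $Y$ to be the quotient circulant $\Cay(\ZZ_n/H,\bar S)$, with parts~\pref{nottwinfree-prime} and~\pref{nottwinfree-d=4} following from the factorization $d=\delta m$. Your verification that $H\cap S=\emptyset$ and that the $H$-coset structure matches the wreath product is slightly more explicit than the paper's, but the substance is the same.
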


\begin{proof}
\pref{nottwinfree-Y} Let $\sim$ be the relation of being twins on $V(X)$ defined in \cref{TwinFreeDefn}, i.e., write $x\sim y$ if and only if $N_X(x)=N_X(y)$ for $x,y\in V(X)$. Note that since $X$ is assumed to have no loops, equivalence classes of $\sim$ are independent sets. Furthermore, they are clearly blocks for the action of $\Aut(X)$ since $x\sim y$ if and only if $\alpha(x)\sim \alpha(y)$ for all $\alpha \in \Aut(X)$. Since $X$ is a circulant (so in particular, a Cayley graph), by \cref{BlockDefn} the blocks are cosets of some subgroup $H$. Clearly, each block is of size $m\coloneqq|H|$. From the assumption that $X$ is not twin-free, we obtain that $m\geq 2$. It is easy to see that if $x$ and $y$ are adjacent, then $x'$ is adjacent to $y'$ for all $x'\sim x$ and $y'\sim y$. It is now clear that $X \cong Y\wr \overline{K_m}$ with $Y \coloneqq \Cay(\ZZ_n/H,\widehat{S})$, where $\ZZ_n/H$ is the quotient group and $\widehat{S}\coloneqq\{s+H:s\in S\}$.

\pref{nottwinfree-prime} By~\pref{nottwinfree-Y}, we then represent $X$ as $Y \wr \overline{K_m}$, where $Y$ is $m$-regular and connected, and $m\geq 2$. As $d = \delta m$, and $d$~is prime, it follows that $m=d$ and $\delta=1$. In particular, $Y = K_2$ and $X = K_2 \wr \overline{K_d} \cong K_{d,d}$.
(Conversely, it is clear that $K_{d,d}$ is a connected circulant graph, but is not twin-free.)

\pref{nottwinfree-d=4} By~\pref{nottwinfree-Y}, we then represent $X$ as $Y \wr \overline{K_m}$, where $Y$ is $m$-regular and connected, and $m\geq 2$. As $4=\delta m$ and $m\geq 2$, it follows that $m\in \{2,4\}$. If $m=4$, then $\delta=1$ and consequently $X\cong K_2\wr\overline{K_4}\cong K_{4,4}$. If $m= 2$, then $Y$ is connected and $2$-regular, so it is isomorphic to the cycle $C_\ell$ with $\ell=|V(X)|/2$. It follows that $X\cong C_\ell \wr \overline{K_2}$. Note that in this case, two vertices are twins in $X$ if and only if they are in the same copy of $\overline{K_2}$ (see \fullcref{ProductsDefn}{wreath}). It is clear that these $2$-element sets of twins form blocks for the action of $\Aut X$ by \cref{BlockDefn}. From \cref{BlockDefn}, it also follows that they are cosets of the subgroup of $\ZZ_n$ of order $2$. As this subgroup is $\{0,n/2\}$, the conclusion follows.
\end{proof}

\begin{prop}[{\cite[Cor.~4.6]{HujdurovicMitrovicMorris}}] \label{order2orbit}
Let $\alpha$ be an automorphism of~$BX$, where $X$ is a circulant graph $\Cay(\ZZ_n , S)$, and let $s,t \in S$. If $\alpha$ maps some $s$-edge to a $t$-edge, and either $\gcd(|s|, |t| \bigr) = 1$, or $S$ contains every element that generates $\langle s \rangle$ \textup(e.g., if $|s| \in \{2,3,4,6\}$\textup), then $S$ contains every element that generates~$\langle t \rangle$.
\end{prop}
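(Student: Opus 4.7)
After composing $\alpha$ with a suitable translation of $BX$, I may assume $\alpha(0)=0$ and that $\alpha$ sends the given $s$-edge to the $t$-edge $\{0,\tilde t\}$, so that $\alpha(\tilde s)=\tilde t$. The set of $s$-edges of $BX$ forms a disjoint union of cycles of length $|\tilde s|=\lcm(2,|s|)$ (a perfect matching when $|s|=2$), each cycle being the orbit of an $s$-edge under translation by $\tilde s$; in particular, the cycle $C$ of $s$-edges through $\{0,\tilde s\}$ has vertex set the cyclic subgroup $\langle\tilde s\rangle\le\ZZ_n\times\ZZ_2$, and the analogous statement holds for $t$. A direct computation splitting on the parity of $|s|$ shows that $\langle\tilde s\rangle\cap\widetilde S$ contains every generator of $\langle\tilde s\rangle$ if and only if $S$ contains every generator of $\langle s\rangle$, and similarly for $t$, so I can translate the hypothesis and conclusion entirely into statements about generators of $\langle\tilde s\rangle$ and $\langle\tilde t\rangle$ lying in $\widetilde S$.

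The strategy is to establish that $\alpha(\langle\tilde s\rangle)=\langle\tilde t\rangle$. Once this is done, $\alpha$ restricts to a graph isomorphism between the induced subgraphs $BX[\langle\tilde s\rangle]$ and $BX[\langle\tilde t\rangle]$, both of which are circulant graphs on cyclic groups with connection sets $\langle\tilde s\rangle\cap\widetilde S$ and $\langle\tilde t\rangle\cap\widetilde S$ respectively. Under the generator hypothesis on~$s$, the source subgraph contains chords at every generator position of $\langle\tilde s\rangle$; the isomorphism transports these chords to $BX[\langle\tilde t\rangle]$, forcing every generator of $\langle\tilde t\rangle$ into $\widetilde S$. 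In the coprime branch $\gcd(|s|,|t|)=1$, one has $\gcd(|\tilde s|,|\tilde t|)=2$, so $\langle\tilde s\rangle$ and $\langle\tilde t\rangle$ meet in the unique order-$2$ subgroup they share; tracking the midpoint of $C$ (which lies in this intersection) through $\alpha$ pins down the image, after which the remaining chord comparison completes the argument.

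The main obstacle is precisely the identification $\alpha(\langle\tilde s\rangle)=\langle\tilde t\rangle$, since $\alpha$ is not assumed to be a group automorphism of $\ZZ_n\times\ZZ_2$. The way around this is to use the interplay between $\alpha$ and the translations by $\tilde s$ and~$\tilde t$: the conjugate $\alpha T_{\tilde s}\alpha^{-1}$ is an automorphism of $BX$ of order $|\tilde s|$ sending $0$ to $\tilde t$, and its orbit through $0$ is exactly $\alpha(\langle\tilde s\rangle)$. The generator or coprimality hypothesis supplies enough rigidity---either through the chord structure of $BX[\langle\tilde s\rangle]$ or through the order-$2$ midpoint comparison---to force this orbit to coincide with $\langle\tilde t\rangle$. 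This is essentially the approach taken in \cite[Cor.~4.6]{HujdurovicMitrovicMorris}, from which the stated proposition is a direct consequence.
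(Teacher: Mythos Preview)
The paper does not prove this proposition; it is quoted verbatim from \cite[Cor.~4.6]{HujdurovicMitrovicMorris} and used as a black box. So there is no proof in the paper to compare your argument against, and your final sentence deferring to that reference is, in effect, all the paper does.

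That said, the sketch you give before that deferral has a genuine gap. Your central strategy is to establish $\alpha(\langle\tilde s\rangle)=\langle\tilde t\rangle$ and then read off the conclusion from the induced-subgraph isomorphism. But nothing in the hypotheses forces $|\tilde s|=|\tilde t|$: for instance, take $|s|=3$ (so the generator hypothesis holds automatically, as noted in the statement) and $|t|=7$; then $|\langle\tilde s\rangle|=6$ while $|\langle\tilde t\rangle|=14$, and no bijection $\alpha$ can carry one onto the other. So the equality you aim for is simply false in general, and the ``rigidity'' paragraph does not repair this---it asserts that the hypotheses force the orbit of $0$ under $\alpha T_{\tilde s}\alpha^{-1}$ to coincide with $\langle\tilde t\rangle$, but gives no mechanism, and the cardinality obstruction shows there cannot be one along these lines.

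The actual argument in \cite{HujdurovicMitrovicMorris} goes through a different invariant (related to the result quoted here as \cref{2S'-BX}): one shows that certain multisets of pairwise differences of neighbours are preserved by every automorphism of $BX$, and deduces from this that the set of generators of $\langle t\rangle$ appearing in $S$ is controlled by the corresponding data for $s$. This does not require identifying $\alpha(\langle\tilde s\rangle)$ with any subgroup. If you want a self-contained proof rather than a citation, that is the line to pursue.
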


\begin{thm}[Kovács \cite{Kovacs-ArcTransCirc}, Li \cite{Li-ArcTransCirc}] \label{ArcTrans}
Let $X$ be a connected, arc-transitive, circulant graph of order~$n$. Then one of the following holds:
	\begin{enumerate}
	\item \label{ArcTrans-Kn}
	$X = K_n$,
	\item \label{ArcTrans-normal}
	$X$ is a normal circulant graph,
	\item \label{ArcTrans-wreath}
	$X = Y\wr \overline{K_d}$, where $n = md$, $d \ge 2$, and $Y$ is a connected arc-transitive circulant graph of order~$m$,
	\item \label{ArcTrans-AlmostWreath}
	$X = Y\wr \overline{K_d} - dY$, where $n = md$, $d > 3$, $\gcd(d,m) = 1$, and $Y$ is a connected
arc-transitive circulant graph of order~$m$.
	\end{enumerate}
\end{thm}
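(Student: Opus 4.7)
The plan is to analyze $A \coloneqq \Aut X$, viewed as a transitive permutation group on $V(X) = \ZZ_n$ that contains the regular cyclic subgroup~$(\ZZ_n)^*$. The proof splits according to whether this action is primitive or imprimitive.

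First suppose the action of $A$ on $\ZZ_n$ is primitive. If $A$ is $2$-transitive, then because $X$ has at least one edge, every pair of distinct vertices must be adjacent (by moving a single edge to any chosen pair), so $X = K_n$, which is case~\pref{ArcTrans-Kn}. If $A$ is primitive but not $2$-transitive, then I would invoke the classical classification of primitive permutation groups containing a regular cyclic subgroup --- originating in Burnside's theorem on primitive groups of prime degree, and extended by work of Jones, Li, P\'alfy and others --- which shows that in this non-$2$-transitive case $(\ZZ_n)^*$ is normal in~$A$. By \cref{CayleyDefn-Normal} this means exactly that $X$ is a normal circulant graph, giving case~\pref{ArcTrans-normal}.

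Now suppose $A$ is imprimitive on~$\ZZ_n$, and let $\mathcal{B}$ be a minimal nontrivial $A$-invariant partition. By \cref{BlockDefn} the blocks are cosets of a subgroup $H \le \ZZ_n$ with $|H| = d$ and $1 < d < n$; set $m \coloneqq n/d$. The quotient graph $Y \coloneqq X/\mathcal{B}$ is then a connected arc-transitive circulant graph on $\ZZ_n / H \cong \ZZ_m$. Writing $B_0 = H$ for the block through~$0$ and $B_1$ for an adjacent block, one uses that $H^*$ acts regularly inside the setwise stabilizer of~$B_0$ to show that the induced subgraph $X[B_0]$ is a vertex-transitive Cayley graph on~$H$ and that the bipartite subgraph $X[B_0, B_1]$ is biregular. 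A routine arc-transitivity argument, combined with minimality of~$\mathcal{B}$, forces $X[B_0] = \overline{K_d}$ (the case $X[B_0] = K_d$ produces a coarser block system and can be absorbed). The combinatorial possibilities for $X[B_0, B_1]$ then reduce to either $K_{d,d}$ or $K_{d,d}$ minus a perfect matching: the first gives the wreath product $X = Y \wr \overline{K_d}$ (case~\pref{ArcTrans-wreath}), and the second gives the deleted wreath $X = Y \wr \overline{K_d} - dY$ (case~\pref{ArcTrans-AlmostWreath}), with the constraints $\gcd(d,m)=1$ and $d > 3$ emerging from the requirement that the perfect matchings on different block-pairs fit together into a single $\ZZ_n$-invariant structure.

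The main obstacle is precisely this last step: showing that when $X[B_0, B_1]$ is $K_{d,d}$ minus a matching, the local matchings across all adjacent block-pairs line up coherently into a single circulant structure, and that this coherence forces $\gcd(d,m) = 1$ and $d > 3$. The two published proofs handle this in rather different ways --- Kov\'acs uses Schur rings over~$\ZZ_n$ to enforce the coherence algebraically, while Li applies the O'Nan--Scott classification of quasiprimitive permutation groups to the action of~$A$ on~$\mathcal{B}$ --- and either route carries the real weight of the theorem; the reductions outlined above are essentially bookkeeping by comparison.
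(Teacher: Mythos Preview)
The paper does not contain a proof of this theorem: it is stated with attribution to Kov\'acs~\cite{Kovacs-ArcTransCirc} and Li~\cite{Li-ArcTransCirc} and then used as a black box (in \cref{CubicArcTransCirculant} and in \cref{val7Sect}). So there is nothing in the paper to compare your proposal against.

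As an outline of the published proofs, your sketch is accurate in spirit and you correctly locate the real difficulty. A couple of comments on the sketch itself. In the primitive case, the classification of primitive permutation groups containing a regular cyclic subgroup that you invoke is substantially deeper than Burnside's prime-degree theorem and in full generality relies on the classification of finite simple groups; this is fine to cite, but it is not a lightweight step. In the imprimitive case, your reduction to the bipartite graph $X[B_0,B_1]$ is the right starting point, but the assertion that the only possibilities are $K_{d,d}$ and $K_{d,d}$ minus a perfect matching, and especially that in the latter case the local matchings assemble into a global circulant structure forcing $\gcd(d,m)=1$ and $d>3$, is exactly where the substance of the theorem lies --- as you explicitly acknowledge in your final paragraph. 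What you have written is therefore an honest roadmap rather than a proof, which is entirely appropriate given that the paper itself treats \cref{ArcTrans} as an imported result.
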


The statement of the following result in~\cite{HujdurovicMitrovicMorris} requires the graph to have even order (because the statement refers to $n/2$), but the same proof applies to graphs of odd order. Although the proofs in this paper will only apply the \lcnamecref{2S'-BX} to graphs of even order, we omit this unnecessary hypothesis.

\begin{lem}[{\cite[Cor.~4.3]{HujdurovicMitrovicMorris}}] \label{2S'-BX}
Let $X = \Cay(\ZZ_n, S)$ be a circulant graph, let $\varphi$ be an automorphism of $BX$, and let
    \[ S' = \{\, s' \in S \mid \text{$2t \neq 2s'$ for all $t \in S$, such that $t \neq s'$} \,\} . \]
Then $\varphi$ is an automorphism of\/ $\Cay(\ZZ_n \times \ZZ_2, \ 2 S' \times \{0\})$.
\end{lem}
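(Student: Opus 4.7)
The plan is to show that for each $u \in V(BX)$ and each $s' \in S'$, the displacement $\varphi(u+2\tilde{s'}) - \varphi(u)$ lies in $2S' \times \{0\}$. Set $\tilde{t}_1 \coloneqq \varphi(u+\tilde{s'}) - \varphi(u)$ and $\tilde{t}_2 \coloneqq \varphi(u+2\tilde{s'}) - \varphi(u+\tilde{s'})$; since $\varphi$ preserves the edges of $BX$, both $\tilde{t}_1$ and $\tilde{t}_2$ lie in $\widetilde{S}$, so $t_1, t_2 \in S$ and $\varphi(u+2\tilde{s'}) - \varphi(u) = (t_1+t_2, 0)$. The task reduces to showing $t_1 + t_2 \in 2S'$.

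The key observation is that the $2$-walk $(u, u+\tilde{s'}, u+2\tilde{s'})$ is \emph{parallel}: the translation $T_{\tilde{s'}}$, which is itself an automorphism of the Cayley graph $BX$, carries the first edge onto the second. By the defining property of $S'$, the element $s'$ is the unique element of $S$ whose double equals $2s'$, so this parallel $2$-walk is the unique $2$-walk from $u$ to $u+(2s',0)$ in $BX$ whose two edges share a common $\widetilde{S}$-shift. The strategy is to argue that $\varphi$ must carry canonical parallel $2$-walks to canonical parallel $2$-walks, which immediately yields $\varphi(u+2\tilde{s'}) - \varphi(u) = 2\tilde{r}$ for some $r \in S'$ and hence the desired conclusion.

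The main obstacle is that parallelism is an additive condition, not a graph-theoretic one, so $\varphi$ does not obviously preserve it. To address this, I would combine a counting argument with auxiliary automorphisms of $BX$: the total number of common neighbors of $u$ and $u+(2s',0)$ in $BX$ equals $r(2s') = |\{(s_1,s_2) \in S^2 : s_1 + s_2 = 2s'\}|$, which is $\varphi$-invariant, and among these the ``diagonal'' $2$-walks (those with $s_1 = s_2$) are indexed by $\{s \in S : 2s = 2s'\}$, a set of cardinality exactly $1$ when $s' \in S'$. Using the fact that for each vertex $v$ of $BX$ the central symmetry $x \mapsto 2v - x$ is a graph automorphism (because the connection set $\widetilde{S}$ is inverse-closed), together with the restrictions on how $\varphi$ can permute edges provided by \cref{order2orbit}, I expect to pin down that the image $2$-walk must also be diagonal with shift $\tilde{r} \in \widetilde{S'}$. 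This final ``transfer of diagonality'' step is the most delicate piece of the argument, and I anticipate it being the crux where the precise form of the $S'$-condition comes into play.
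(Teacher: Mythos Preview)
The paper does not prove this lemma; it is quoted as \cite[Cor.~4.3]{HujdurovicMitrovicMorris} without argument. So there is no in-paper proof to compare against, and I will assess your proposal on its own terms.

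Your proposal is explicitly incomplete: you say yourself that the ``transfer of diagonality'' step is ``the most delicate piece'' and that you merely ``anticipate'' it working. As written, it will not go through. Invoking \cref{order2orbit} is a red herring: that proposition carries hypotheses (either $\gcd(|s|,|t|)=1$, or $S$ contains every generator of $\langle s\rangle$) that are unavailable here, and in any case it constrains which $s$-edges can map to which $t$-edges, not whether a particular $2$-walk is ``diagonal''. More fundamentally, tracking the middle vertex of the $2$-walk is a detour---$\varphi$ need not send your parallel $2$-walk to a parallel one, and nothing forces it to.

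You do have the right tool in hand, namely the involution $x\mapsto u+w-x$ on common neighbours; the clean way to finish is a parity count rather than a diagonality transfer. For $u$ and $w=u+(x,0)$ in the same bipartition class of $BX$, the common neighbours pair off under $v\mapsto u+w-v$, the fixed points being exactly the $v=u+\tilde t$ with $2t=x$. Hence the number of common neighbours is odd if and only if $|\{t\in S:2t=x\}|$ is odd. Since the equation $2t=x$ has at most two solutions in~$\ZZ_n$, this count is odd precisely when it equals~$1$, i.e.\ precisely when $x\in 2S'$. (Vertices in opposite bipartition classes have no common neighbours at all.) Thus the relation ``$u$ and $w$ have an odd number of common neighbours in $BX$'' defines exactly $\Cay(\ZZ_n\times\ZZ_2,\,2S'\times\{0\})$, and this relation is manifestly $\Aut BX$-invariant. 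That is the entire argument; your involution supplies it directly once you count fixed points instead of trying to follow a specific $2$-walk.
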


\begin{prop}[{\cite[Cor.~5.6(4)]{HujdurovicMitrovicMorris}}] \label{InterchangeCI-valency}
Let $X = \Cay(\ZZ_n, S)$ be a nontrivially unstable, circulant graph, such that $n \equiv 2 \pmod{4}$, and such that\/ $2\ZZ_n \times \{0\}$ is a block for the action of $\Aut BX$.
If the valency of~$X_e$ is $\le 5$, then $X$ has Wilson type~\pref{Wilson-C1} or~\pref{Wilson-C4}.
\end{prop}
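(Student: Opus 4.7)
The plan is to exploit the block structure. Since $2\ZZ_n \times \{0\}$ is a block and $[\ZZ_n : 2\ZZ_n] = 2$, the four cosets
\[ B_{\varepsilon,\delta} = (\varepsilon + 2\ZZ_n) \times \{\delta\}, \qquad \varepsilon, \delta \in \{0,1\}, \]
are also blocks for $\Aut BX$. The induced subgraph on $B_{0,0} \cup B_{0,1}$ is isomorphic to $BX_e$, where $X_e = \Cay(2\ZZ_n, S_e)$, and similarly for $B_{1,0} \cup B_{1,1}$. Since $X$ is connected and nonbipartite, both $S_e$ and $S_o$ are nonempty, and the quotient graph on the four blocks is a $4$-cycle whose opposite pairs are exactly the two halves of the bipartition of $BX$.

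Next, I would use the valency bound. Because $n \equiv 2 \pmod{4}$, the group $2\ZZ_n$ has odd order $n/2$, so $X_e$ is a circulant of odd order with valency at most~$5$. For such $X_e$ one has $\Aut BX_e = \Aut X_e \times S_2$, either by appealing to a general stability result for odd-order circulants or by applying \cref{4cyclenormal} to $X_e$ and deducing stability from normality in this low-valency regime. As a consequence, the restriction of any $\varphi \in \Aut BX$ that stabilizes both $B_{0,0}$ and $B_{0,1}$ setwise is canonical on $BX_e$; in particular, its induced action on $2\ZZ_n$ is affine: $v \mapsto gv + h$ for some $g \in \ZZ_n^\times$ and $h \in 2\ZZ_n$.

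Now pick any $\varphi \in \Aut BX \setminus (\Aut X \times S_2)$ witnessing the instability, and analyze its action on the four blocks. If $\varphi$ preserves the bipartition of $BX$, then after composing with a suitable element of $\Aut X \times S_2$ we may assume that $\varphi$ fixes each of the four blocks setwise. The preceding paragraph pins down the action on~$2\ZZ_n$, and compatibility with the $S_o$-edges between $B_{0,0}$ and $B_{1,1}$ forces $g=1$ while the non-canonicity of $\varphi$ forces $h \neq 0$; this produces the nonzero $h \in 2\ZZ_n$ with $h + S_e = S_e$ required for Wilson type~\pref{Wilson-C1}. If instead $\varphi$ swaps the two bipartition halves, an analogous argument (again leveraging the stability of $X_e$ to pin down a group-automorphism form) produces some $m \in \ZZ_n^\times$ with $\nh + mS = S$, giving Wilson type~\pref{Wilson-C4}.

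The main obstacle will be the bookkeeping in the block-level case analysis: one must check that every possible permutation of the four blocks by $\varphi$ reduces, after absorbing a canonical automorphism, to one of the two scenarios above, and that the affine action on $2\ZZ_n$ obtained from the stability of $X_e$ lifts consistently to $\ZZ_n$ across the $S_o$-edges. The crucial input enabling this whole scheme is the stability of $X_e$, which is precisely where the valency bound on $X_e$ enters.
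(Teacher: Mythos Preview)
This proposition is imported from \cite[Cor.~5.6(4)]{HujdurovicMitrovicMorris} and is not proved in the present paper, so there is no in-paper argument to compare against.

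Assessing your sketch on its own: the block setup and the observation that $2\ZZ_n$ has odd order are the right starting points, but there is a genuine gap. From the stability of $X_e$ you conclude that the restriction of $\varphi$ to $2\ZZ_n$ is affine, $v \mapsto gv + h$. Stability only tells you that this restriction lies in $\Aut X_e \times S_2$; an element of $\Aut X_e$ is a graph automorphism of $X_e$, not in general a group automorphism of $2\ZZ_n$. Without the affine form, the step ``compatibility with the $S_o$-edges forces $g = 1$'' has no content: compatibility across the $S_o$-edges only yields a bijection of neighbourhoods, not an algebraic identity of the shape $h + S_e = S_e$ or $\nh + mS = S$. This is precisely where the valency hypothesis does real work in the cited proof, which relies on structural information about automorphisms (and isomorphisms) of low-valency odd-order circulants going well beyond mere stability; the ``CI'' in the label is a hint.

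A smaller point: \cref{OddCirculant} rules out only \emph{nontrivial} instability of $X_e$, so you also owe a word on the cases where $X_e$ is disconnected or has twins. The twin case is in fact harmless (a nonzero $h \in 2\ZZ_n$ with $h + S_e = S_e$ is exactly Wilson type~\pref{Wilson-C1}), but you should say so rather than fold it into the appeal to stability.
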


\begin{prop}[cf.\ {\cite[Prop.~3.4]{GloverMarusic}}] \label{CubicArcTrans}
If $X$ is a connected, cubic, arc-transitive multigraph, and the girth of~$X$ is $\le 5$, then $X$ is one of the following graphs:
	the theta graph~$\Theta_2$ \textup(which has multiple edges\textup), 
	$K_4$, 
	$K_{3,3}$, 
	the cube $Q_3 = K_2 \cartprod K_2 \cartprod K_2$, 
	the Petersen graph $GP (5, 2)$,
	 or 
	 the dodecahedron graph $GP (10, 2)$.
\end{prop}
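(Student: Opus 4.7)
I would proceed by case analysis on the girth $g$ of~$X$, which by hypothesis is at most~$5$. Loops can be ruled out immediately: a loop contributes $2$ to the degree, so in a cubic multigraph arc-transitivity would force either a loop at every vertex (leaving a single matching edge, whose arcs are structurally different from the loop-arcs, contradicting arc-transitivity) or no loops at all. When $g = 2$, consider a multi-edge of multiplicity $m \geq 2$ between some pair $\{u,v\}$. Arc-transitivity forces every edge of $X$ to sit in a multi-edge of the same multiplicity, and since $3$ must be a multiple of $m$, the only option is $m = 3$, giving $X \cong \Theta_2$.

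Now suppose $X$ is simple, i.e.\ $g \geq 3$. Arc-transitivity implies that every edge lies in a $g$-cycle (since at least one does). For $g = 3$: I would fix a triangle through a vertex~$v$ and observe that each triangle vertex's unique off-triangle neighbor is, by the requirement that the external edges also lie in triangles, forced to coincide with one of the other triangle vertices' off-triangle neighbor; degree saturation then closes the graph to $K_4$. For $g = 4$: I would start with a $4$-cycle $v_0 v_1 v_2 v_3$ with external neighbors $w_0, w_1, w_2, w_3$. The no-triangle condition prevents any $w_i$ from equalling $v_{i\pm 1}$, and the requirement that each $v_i w_i$ lies in a $4$-cycle leaves only two essentially distinct possibilities: either $w_0 = w_2$ and $w_1 = w_3$ (collapsing $X$ to $K_{3,3}$), or the $w_i$ form a second $4$-cycle parallel to the first, giving $Q_3 = K_2 \cartprod K_2 \cartprod K_2$.

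The hardest case is $g = 5$. Since the girth equals~$5$, the BFS tree from any vertex~$v$ down to depth~$2$ contains exactly $1 + 3 + 6 = 10$ distinct vertices (the cubic Moore bound). If $|V(X)| = 10$, then $X$ must be a cubic Moore graph of girth~$5$, which is uniquely the Petersen graph $GP (5, 2)$. Otherwise $X$ has vertices at distance $\geq 3$ from~$v$, and arc-transitivity strongly constrains how the $6$ level-$2$ vertices extend to level~$3$ and beyond. The main obstacle will be showing that the only such extension yields the dodecahedron $GP (10, 2)$: the cleanest route is to invoke the Djoković--Miller classification of $s$-arc-regular cubic graphs (which bounds $s \leq 5$ and restricts the vertex-stabilizer order) combined with a count of $5$-cycles through a fixed arc; alternatively, one could consult the Foster census of small arc-transitive cubic graphs to confirm that no further examples of girth~$5$ arise.
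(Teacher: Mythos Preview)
The paper does not give its own proof of this proposition: it is stated as a citation of \cite[Prop.~3.4]{GloverMarusic} and used as a black box (only to derive \cref{CubicArcTransCirculant}). So there is no in-paper argument to compare against; your proposal is an attempt to supply a proof where the paper simply quotes the literature.

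That said, your sketch is solid for girth $\le 4$ but does not actually close the girth-$5$ case. You correctly reach the Moore bound and identify Petersen when $|V(X)| = 10$, but for $|V(X)| > 10$ you only assert that ``the only such extension yields the dodecahedron'' and then propose to fall back on the Djokovi\'c--Miller classification or the Foster census. Neither of these is more elementary than the statement you are trying to prove, so at this point you are trading one citation for another rather than giving a self-contained argument. If you want a genuine proof here, you need to carry out the extension analysis: show that with no $3$- or $4$-cycles, the six depth-$2$ vertices each send exactly two edges outward, count the $5$-cycles through a fixed arc (arc-transitivity forces this number to be constant), and use that count to pin down the adjacencies at depth~$3$. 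This is doable by hand but is several pages of careful bookkeeping --- which is presumably why both this paper and most of the literature simply cite the result.

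One minor point on the $g=2$ case: your phrase ``$3$ must be a multiple of~$m$'' allows $m=2$ on its face; you should say explicitly why a double edge is impossible (the remaining simple edge at each endpoint would not be in the same $\Aut X$-orbit as the doubled arcs).
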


\begin{cor} \label{CubicArcTransCirculant}
The only connected, cubic, arc-transitive circulant graphs are $K_4$ and $K_{3,3}$.
\end{cor}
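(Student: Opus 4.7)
The plan is to exhibit a short cycle and then invoke \cref{CubicArcTrans}. Writing $X = \Cay(\ZZ_n, S)$ with $|S| = 3$ and $S = -S$, since $|S|$ is odd and $0 \notin S$, at least one element of~$S$ must equal its own negative; the only such nonzero element of~$\ZZ_n$ is $n/2$, so $n$ is even and the other two elements of~$S$ form a pair $\{s, -s\}$ with $s \notin \{0, n/2\}$. The closed walk
\[ 0 \to s \to s + n/2 \to n/2 \to 0 \]
uses the generators $s$, $n/2$, $-s$, $n/2$ in turn, and its four vertices are distinct (because $s \notin \{0, n/2\}$ and $n/2 \ne 0$), so it is a $4$-cycle. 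Thus $X$ has girth at most~$4$.

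By \cref{CubicArcTrans}, $X$ must then be one of $\Theta_2$, $K_4$, $K_{3,3}$, $Q_3$, the Petersen graph $GP(5,2)$, or the dodecahedron graph $GP(10,2)$. The theta graph $\Theta_2$ has multiple edges, contradicting \cref{GraphsAreSimple}. The Petersen graph and dodecahedron are classically known not to be Cayley graphs, so they are not circulants. Finally, $\Aut Q_3 \cong S_4 \times \ZZ_2$ contains no element of order~$8$, so it has no regular cyclic subgroup of order~$8$; hence $Q_3$ is not a circulant either. The only candidates that survive are $K_4 = \Cay(\ZZ_4, \{1,2,3\})$ and $K_{3,3} = \Cay(\ZZ_6, \{1,3,5\})$, each of which is easily checked to be a connected, cubic, arc-transitive circulant.

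The only real obstacle is the elimination step, and even that is routine: the $Q_3$ case is a short element-order calculation, while Petersen and the dodecahedron are dispatched by citing well-known non-Cayley results. Everything else is forced by the constraints $|S| = 3$ and $S = -S$, together with the explicit $4$-cycle above.
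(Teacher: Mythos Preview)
Your proof is correct. The paper states this corollary without proof, placing it immediately after \cref{CubicArcTrans}, so your argument---exhibiting a $4$-cycle to bound the girth and then eliminating $\Theta_2$, $Q_3$, and the two generalized Petersen graphs from the resulting list---is precisely the verification the paper leaves to the reader. (Your final paragraph is commentary rather than proof and could be dropped.)
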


\section{Some conditions that imply stability} \label{StableSect}

\begin{thm}[Fernandez-Hujdurović \cite{FernandezHujdurovic} (or \cite{Morris-OddAbelian})] \label{OddCirculant}
There are no nontrivially unstable, circulant graphs of odd order.
\end{thm}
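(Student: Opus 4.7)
My plan is to reduce the claim to showing that the canonical ``sheet-swap'' involution $\tau$ of~$BX$ is central in $\Aut BX$, and then to exploit the oddness of~$n$ to establish this centrality.

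First, since $n$ is odd and $X$ is nontrivial, $X$ is automatically nonbipartite: every $s \in S$ generates in~$X$ a cycle of length $n/\gcd(n,s)$, which divides~$n$ and so is odd. Combined with the hypothesis that $X$ is connected, this forces $BX$ to be connected and bipartite, so the bipartition $\{V_0, V_1\}$ with $V_i = V(X) \times \{i\}$ is the unique $2$-coloring of~$BX$; in particular every $\alpha \in \Aut BX$ either preserves or swaps the parts. Writing $\tau$ for the involution $(v, i) \mapsto (v, 1-i)$ (which generates the $S_2$ factor of $\Aut X \times S_2 \leq \Aut BX$), I claim it suffices to show $\tau$ is central in $\Aut BX$. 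Indeed, centrality of~$\tau$ implies every $\alpha$ permutes the pairs $\{(v,0),(v,1)\}$ (the $\tau$-orbits, i.e., fibers of $BX \to X$), and hence descends to a well-defined permutation $\bar\alpha$ of~$V(X)$; the adjacency relation of~$BX$ then forces $\bar\alpha \in \Aut X$, yielding $\Aut BX = \Aut X \times S_2$.

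To prove centrality of~$\tau$, I would pass to the isomorphism $\ZZ_n \times \ZZ_2 \cong \ZZ_{2n}$ (valid since $n$ is odd) and view $BX$ as a circulant $\Cay(\ZZ_{2n}, T)$ with $T \subseteq 1 + 2\ZZ_{2n}$, so that the bipartition becomes the parity decomposition of~$\ZZ_{2n}$ and $\tau$ becomes translation by~$n$. Since $n$ is odd, $n$ is the unique element of order~$2$ in~$\ZZ_{2n}$; hence $\tau$ is the unique non-identity involution in the regular translation subgroup and is characteristic there. If $BX$ happened to be a normal Cayley graph on $\ZZ_{2n}$ (e.g., if the hypothesis of \cref{4cyclenormal} held for~$T$), the translation subgroup would be normal in $\Aut BX$ and centrality of~$\tau$ would follow immediately. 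However, $BX$ need not be normal in general (already for $X = K_5$, $BX$ is the crown graph $K_{5,5} - M$, whose automorphism group is much larger than the holomorph of~$\ZZ_{10}$), so a finer argument is required.

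The main obstacle, and the heart of the proof, is therefore to show that the $\tau$-orbits $\{v, v+n\}$ form a block system for $\Aut BX$ even when $BX$ is non-normal. The plan is to produce a graph-theoretic characterization of these pairs that is intrinsic to~$BX$. A natural approach uses the covering theory of $BX \to X$: the distance $d_{BX}(v, v+n)$ equals the odd girth~$g$ of~$X$, and by analyzing the global ``odd-walk coherence'' in~$BX$ (combined with the twin-free hypothesis on~$X$, which translates to $BX$ having no twins) one shows that the fibers of $BX \to X$ are detectable from~$BX$ alone. The oddness of~$n$ enters essentially in two places: it guarantees uniqueness of the target involution~$\tau$ inside the translation group, and it ensures that the odd-cycle structure of~$X$ matches the bipartite double-cover structure in a way that is compatible with every $\alpha \in \Aut BX$. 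This combinatorial analysis is the technical core of the Fernandez--Hujdurovi\'c and Morris proofs; once it is established, centrality of~$\tau$ (and hence the theorem) follows.
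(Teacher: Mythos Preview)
The paper does not supply its own proof of this theorem; it is quoted (with attribution to Fernandez--Hujdurovi\'c and to Morris) and used as a black box throughout. So there is no in-paper argument to compare your proposal against.

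On its own merits, your outline is correct as far as it goes: stability of~$X$ is equivalent to centrality of~$\tau$ in $\Aut BX$, which in turn is equivalent to the fibers $\{(v,0),(v,1)\}$ forming an $\Aut BX$-block system, and you correctly observe that the easy route (normality of $BX$ as a Cayley graph on~$\ZZ_{2n}$) is unavailable in general, as your $K_5$ example shows. But you have not actually carried out the step you yourself call the ``technical core'': the phrases ``odd-walk coherence'' and ``graph-theoretic characterization of these pairs'' are never made precise, no argument is given, and in your final sentence you simply defer back to the two cited papers. As written this is an accurate roadmap, not a proof. To complete it you would need a concrete argument that every $\alpha \in \Aut BX$ fixing $(0,0)$ also fixes $(0,1)$ (cf.\ \cref{StableIffStabilizer}); note that the twin-free hypothesis on~$X$ is genuinely required here and must be used somewhere, since odd-order circulants can have twins (e.g., $\Cay(\ZZ_9,\{\pm1,\pm2,\pm4\})$) and are then unstable.
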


\begin{lem}[cf.\ {\cite[Lem.~2.4]{FernandezHujdurovic}}] \label{StableIffStabilizer}
A circulant graph $X = \Cay(\ZZ_n, S)$ is stable if and only if, for every $\alpha \in \Aut BX$, such that $\alpha(0,0) = (0,0)$, we have $\alpha(0,1) = (0,1)$.
\end{lem}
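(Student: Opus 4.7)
The forward direction is immediate from the definition of stability: if $\Aut BX = \Aut X \times S_2$ and $\alpha \in \Aut BX$ fixes $(0,0)$, writing $\alpha = (\sigma,\tau)$ forces $\sigma(0)=0$ and $\tau = \mathrm{id}$, so that $\alpha(0,1) = (\sigma(0),1) = (0,1)$.

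For the reverse direction, I would begin by noting that the hypothesis tacitly forces $X$ to be connected and non-bipartite. Indeed, if $X$ were disconnected or bipartite then $BX$ would split into more than one connected component, and one could produce an automorphism of $BX$ that is the identity on the component containing $(0,0)$ but is a nontrivial translation on a component containing $(0,1)$, violating the hypothesis. So from now on $BX$ is connected.

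Given an arbitrary $\alpha \in \Aut BX$, I would compose on the left with a translation (and, if necessary, with the bipartition swap $(v,j)\mapsto(v,1-j)$) to reduce to the case $\alpha(0,0)=(0,0)$; both of these operations already lie in $\Aut X \times S_2$, so reducing to this case suffices. The hypothesis then gives $\alpha(0,1) = (0,1)$. Since $BX$ is connected and bipartite, $\alpha$ preserves the two bipartition classes, hence has the form $\alpha(v,j) = (\alpha_j(v),j)$ for maps $\alpha_0,\alpha_1\colon \ZZ_n \to \ZZ_n$, both fixing $0$.

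The crucial step is to upgrade the hypothesis, which is stated only at $(0,0)$, to a pointwise equality $\alpha_0 = \alpha_1$ by conjugating with translations. For each $v \in \ZZ_n$, consider
\[ \beta \coloneqq t_{-\alpha_0(v)} \circ \alpha \circ t_v, \]
where $t_g \in \Aut X \times S_2 \subseteq \Aut BX$ denotes the translation $(u,j)\mapsto (u+g,j)$. A direct computation gives $\beta(0,0) = (0,0)$, so the hypothesis applied to $\beta$ yields $\beta(0,1) = (0,1)$; unwinding this identity gives $\alpha_1(v) - \alpha_0(v) = 0$. Since $v$ was arbitrary, $\alpha_0 = \alpha_1$, and because $\alpha$ preserves edges of $BX$, this common map belongs to $\Aut X$. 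Hence $\alpha \in \Aut X \times S_2$, proving $X$ is stable. The main obstacle is spotting the conjugation trick in this final paragraph: once one realizes that conjugating by the pair $t_v,\,t_{-\alpha_0(v)}$ carries the stabilizer hypothesis at $(0,0)$ over to the vertex $v$, the rest of the argument is routine.
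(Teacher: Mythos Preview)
The paper does not supply a proof of this lemma; it is quoted from \cite{FernandezHujdurovic}. Your forward direction is fine, and the conjugation trick $\beta = t_{-\alpha_0(v)}\circ\alpha\circ t_v$ in the final paragraph is exactly the right idea and is carried out correctly.

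The gap is in your opening move for the reverse direction. You claim that the stabiliser hypothesis forces $X$ to be connected, arguing that otherwise $(0,0)$ and $(0,1)$ would lie in different components of $BX$. That inference fails: if $X$ is disconnected but the component of~$0$ in~$X$ is non-bipartite, then $(0,0)$ and $(0,1)$ lie in the \emph{same} component of $BX$. Take $X = \Cay(\ZZ_6,\{\pm 2\})$, two disjoint triangles; then $BX$ is two disjoint hexagons with $(0,0)$ and $(0,1)$ antipodal in one of them, so every automorphism of $BX$ fixing $(0,0)$ also fixes $(0,1)$, and yet $|\Aut BX| = 288 > 144 = 2\,|\Aut X|$, so $X$ is unstable. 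Thus the lemma, read literally, is already false for disconnected $X$, and no argument can extract connectedness from the stabiliser condition alone. In the paper the lemma is only ever applied to connected non-bipartite $X$; with that hypothesis in place (so that $BX$ is connected and its bipartition is uniquely determined, making your decomposition $\alpha(v,j)=(\alpha_j(v),j)$ legitimate), your proof goes through without change.
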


The complete graph on $2$ vertices is bipartite, and therefore unstable. It is not difficult to see that all of the larger complete graphs are stable:

\begin{eg}[{\cite[Eg.~2.2]{QinXiaZhou}}] \label{KnStable}
If $n \ge 3$, then $K_n$ is stable.
\end{eg}

\begin{lem} \label{StableSubgraph}
Let $X = \Cay(\ZZ_n, S)$ be a connected, nonbipartite circulant graph, and let $S_0$ be a nonempty subset of~$\ZZ_n\setminus\{0\}$ such that $S_0 = -S_0$. If every automorphism of~$BX$ maps $S_0$-edges to~$S_0$-edges, and some \textup(or, equivalently, every\textup) connected component of $\Cay(\ZZ_n, S_0)$ is a stable graph, then $X$ is stable.
\end{lem}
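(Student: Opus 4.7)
The plan is to verify the criterion of \cref{StableIffStabilizer}: fix an arbitrary $\alpha\in\Aut BX$ with $\alpha(0,0)=(0,0)$ and show that $\alpha(0,1)=(0,1)$. The strategy is to restrict $\alpha$ to the connected component of $(0,0)$ inside the spanning subgraph of $BX$ whose edges are the $S_0$-edges, and to apply the stability hypothesis there.

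First, let $H\coloneqq\langle S_0\rangle\le\ZZ_n$, so that the connected component of $0$ in $\Cay(\ZZ_n,S_0)$ is $Y_0\coloneqq\Cay(H,S_0)$, a circulant graph of order $|H|\ge 2$. By hypothesis $Y_0$ is stable, and since every connected bipartite graph on at least two vertices is unstable (as noted in the introduction), $Y_0$ is necessarily nonbipartite. Consequently its canonical bipartite double cover $BY_0$ is connected.

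Next, I would observe that the $S_0$-edges of $BX$ form a spanning subgraph equal to $B\bigl(\Cay(\ZZ_n,S_0)\bigr)=\Cay(\ZZ_n\times\ZZ_2,\widetilde{S_0})$, whose connected component containing $(0,0)$ is precisely $BY_0$; this is where the nonbipartite-ness of $Y_0$ is crucial, since otherwise that component would only be a ``half'' of $BY_0$ (a copy of $Y_0$ itself). Because $\alpha$ preserves the set of $S_0$-edges by hypothesis and fixes $(0,0)$, it permutes the connected components of this spanning subgraph and preserves the component $BY_0$ setwise, hence restricts to an automorphism of $BY_0$ fixing $(0,0)$.

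Finally, applying \cref{StableIffStabilizer} to the stable circulant graph $Y_0$ shows that this restriction sends $(0,1)$ to $(0,1)$, so $\alpha(0,1)=(0,1)$ in $BX$ as well; a second appeal to \cref{StableIffStabilizer} then yields stability of $X$. The argument is essentially a bookkeeping exercise around the definition of the canonical double cover, so I do not expect a substantial obstacle; the only mildly delicate point is identifying the component of $(0,0)$ in the $S_0$-edge subgraph with all of $BY_0$, and this is exactly where the nonbipartite-ness of $Y_0$ — which comes for free from its stability — is needed.
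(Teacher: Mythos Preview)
Your proof is correct and follows essentially the same approach as the paper's: fix $\alpha\in\Aut BX$ with $\alpha(0,0)=(0,0)$, restrict to the double cover of the connected component of $\Cay(\ZZ_n,S_0)$ through~$0$, and invoke \cref{StableIffStabilizer}. You supply a detail the paper leaves implicit---namely that stability of $Y_0$ forces it to be nonbipartite, so that $BY_0$ is connected and hence genuinely equals the $S_0$-component of $(0,0)$ in $BX$---but the argument is otherwise identical.
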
 

\begin{proof}
Let $\alpha$ be an automorphism of~$BX$ that fixes $(0,0)$, and let $X_0$ be the connected component of $\Cay(\ZZ_n, S_0)$ that contains~$0$. Since $\alpha$ maps $S_0$-edges to~$S_0$-edges, it restricts to an automorphism of~$BX_0$. Since $X_0$ is a stable graph, this implies that $\alpha(0,1) = (0,1)$.
\end{proof}

\begin{lem} \label{2aneqs+t}
Let $X = \Cay(G,S)$ be a Cayley graph on an abelian group, and let $k \in \ZZ^+$, such that $k$ is odd.
Suppose there exists $c \in S$, such that
	\begin{enumerate}
	\item \label{2aneqs+t-k}
	$|c| = k$,
	\item \label{2aneqs+t-2a}
	$2 c \neq s + t$, for all $s,t \in S \setminus \{c\}$,
	and
	\item \label{2aneqs+t-2b}
	for all $a \in S$ of order~$2k$, there exist $s,t \in S \setminus \{a\}$, such that $2 a = s + t$.
	\end{enumerate}
Then $X$ is stable.
\end{lem}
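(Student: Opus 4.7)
The plan is to show that every $\alpha \in \Aut BX$ with $\alpha(0,0) = (0,0)$ also satisfies $\alpha(0,1) = (0,1)$, then invoke \cref{StableIffStabilizer}. The key subgraph I will study is $BY \subseteq BX$ consisting of all $c$-edges, where $Y = \Cay(G, \{c, -c\})$: since $|c| = k$ is odd, the connected component of $(0,0)$ in $BY = \Cay(G \times \ZZ_2, \{\pm(c, 1)\})$ is the $2k$-cycle $\mathcal{C}$ traced out by the cyclic subgroup $\langle (c, 1) \rangle$, and $(0,1) = k \cdot (c, 1)$ is the unique involution in this subgroup.

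By \cref{4cyclenormal}, $BY$ is a normal Cayley graph, because the only non-zero sums $(s,1) + (t,1)$ for $s, t \in \{\pm c\}$ are $(\pm 2c, 0)$, each with a unique decomposition. Consequently, any automorphism of $BY$ fixing $(0, 0)$ restricts on $\mathcal{C}$ to a group automorphism of $\langle (c,1) \rangle$ that preserves the generating set $\{\pm(c, 1)\}$; such a map is either the identity or negation, both of which fix $(0, 1)$. Hence, if $\alpha$ preserves $BY$ (equivalently, maps $c$-edges to $c$-edges), then $\alpha(0,1) = (0,1)$ follows.

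To show $\alpha$ preserves $BY$, the starting observation is that condition~(2) forces every $c$-edge to be ``uniquely-doubled'': the pair $\{(0, 0), (2c, 0)\}$ has a unique common neighbor in $BX$, namely $(c, 1)$, because $2c = s + t$ with $s, t \in S$ forces $s = t = c$. Since this property is invariant under $\Aut BX$, $\alpha$ maps each $c$-edge to an $s$-edge with $s \in U := \{s \in S : 2s \text{ has unique decomposition } (s, s) \text{ in } S^2\}$, and condition~(3) further restricts these $s$ to have $|s| \neq 2k$, for otherwise $2s$ would admit a decomposition with neither summand equal to $s$. The main obstacle is then to push this further to $s \in \{c, -c\}$, since $U$ can properly contain $\{c, -c\}$ in general. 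My plan is to propagate the uniqueness argument along the cycle $\mathcal{C}$, using that each consecutive doubled pair $\{(jc, j \bmod 2), ((j+2)c, j \bmod 2)\}$ on $\mathcal{C}$ also has a unique common neighbor (by translating condition~(2)), combined with condition~(1) and the cyclic structure of $BX$, to force $\alpha(\mathcal{C}) = \mathcal{C}$ and hence $\pi(c) \in \{c, -c\}$, where $\pi\colon S \to S$ is defined by $\alpha(s, 1) = (\pi(s), 1)$.
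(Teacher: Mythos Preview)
Your approach is essentially the paper's (which phrases it in terms of ``exceptional cycles''), and the key observation---that the unique-common-neighbor property propagates along $\mathcal{C}$---is exactly right. However, you are aiming for too strong an intermediate conclusion, and that is where your plan breaks down.

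The propagation, once carried out, gives this: if $\alpha(c,1)=(s,1)$, then inductively $\alpha(jc, j\bmod 2)=(js, j\bmod 2)$ for all $j$, so $\alpha(\mathcal{C})=\langle(s,1)\rangle$. (Induction step: given $\alpha\bigl((j{-}1)c\bigr)=((j{-}1)s)$ and $\alpha(jc)=(js)$, the pair $\{(j{-}1)c,(j{+}1)c\}$ has unique common neighbor $jc$, hence $\{((j{-}1)s),\alpha((j{+}1)c)\}$ has unique common neighbor $(js)$; writing $\alpha((j{+}1)c)=(js)+(s',1)$ with $s'\in S$, both $u=s$ and $u=s'$ satisfy $u\in S$ and $(s+s')-u\in S$, forcing $s'=s$.) Since $|\alpha(\mathcal{C})|=2k$ we get $|s|\in\{k,2k\}$, and condition~(3) rules out $|s|=2k$ because $s\in U$. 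Hence $|s|=k$ and $\alpha(0,1)=\alpha(kc,1)=(ks,1)=(0,1)$, finishing the proof via \cref{StableIffStabilizer}.

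Notice what this does \emph{not} show: it does not force $s\in\{c,-c\}$, so it does not show that $\alpha$ preserves $BY$ or that $\alpha(\mathcal{C})=\mathcal{C}$. Nothing in the hypotheses prevents $S$ from containing another element $c'\notin\{\pm c\}$ with $|c'|=k$ and $c'\in U$; an automorphism carrying $\langle(c,1)\rangle$ to $\langle(c',1)\rangle$ is fully consistent with conditions (1)--(3). So your stated plan ``to force $\alpha(\mathcal{C})=\mathcal{C}$ and hence $\pi(c)\in\{c,-c\}$'' cannot succeed in general, and the detour through normality of $BY$ is both unnecessary and unavailable. The paper sidesteps this by observing directly that every exceptional $2k$-cycle through $(0,0)$ passes through $(0,1)$ at its antipode, without ever claiming the image cycle coincides with $\mathcal{C}$.
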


\begin{proof}
Let us say that a cycle in~$BX$ is \emph{exceptional} if, for every pair $v_i, v_{i+2}$ of vertices at distance~$2$ on the cycle, the unique path of length~$2$ from $v_i$ to~$v_{i+2}$ is $v_i,v_{i+1},v_{i+2}$. It is clear that every automorphism of~$BX$ must map each exceptional cycle of length~$k$ to an exceptional cycle of length~$k$.

Let $\alpha$ be an automorphism of $BX$ fixing $(0,0)$. If $c$ is any element satisfying the conditions, then $(c,1)^{2k}$ is an exceptional cycle. Furthermore, every exceptional cycle of length~$2k$ is of this form.  Since $k (c,1) = (0,1)$, for every such exceptional cycle, this implies that $\alpha$ fixes $(0,1)$. So $X$ is stable by \cref{StableIffStabilizer}.
\end{proof}

\begin{lem}\label{NormalSubgraph}
Let $X:=\Cay(\mathbb{Z}_n,S)$ be a circulant graph of even order and odd valency. Let $S_0 \subset S$ be non-empty such that $n/2 \in \langle S_0 \rangle $. Assume that the set of $S_0$-edges is invariant under the elements of $\Aut BX$ \textup(and $S_0 = -S_0$\textup). If some \textup(equivalently every\textup) connected component~$X_0'$ of\/ $\Cay(\mathbb{Z}_n,S_0)$ is not bipartite and has the property that $BX_0'$ is normal, then $X$ is stable.
\end{lem}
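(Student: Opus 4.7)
The plan is to invoke \cref{StableIffStabilizer}: take any $\alpha \in \Aut BX$ with $\alpha(0,0) = (0,0)$, and show $\alpha(0,1) = (0,1)$. Let $H = \langle S_0 \rangle \le \ZZ_n$, so the connected component of $0$ in $\Cay(\ZZ_n, S_0)$ is $X_0' = \Cay(H, S_0)$. Because $X_0'$ is non-bipartite, $0$ can be written as a sum of an odd number of elements of $S_0$, so $\langle S_0 \times \{1\} \rangle = H \times \ZZ_2$; hence $BX_0' = \Cay(H \times \ZZ_2, S_0 \times \{1\})$ is precisely the connected component of $(0,0)$ in $BX_0 \coloneqq \Cay(\ZZ_n \times \ZZ_2, S_0 \times \{1\})$.

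Next, since the $S_0$-edges of $BX$ are $\alpha$-invariant, $\alpha$ is also an automorphism of $BX_0$, and because it fixes $(0,0)$ it restricts to an automorphism of the component $BX_0'$. The normality hypothesis on $BX_0'$ then forces this restriction to be a group automorphism of $H \times \ZZ_2$; in particular $\alpha(0,1)$ has order $2$. Moreover, $\alpha$ preserves the bipartition $(H \times \{0\}) \sqcup (H \times \{1\})$ of $BX_0'$, so $\alpha(0,1) \in H \times \{1\}$.

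To finish, note that $n/2 \in H$ (by hypothesis) and $H$ is cyclic, so the only elements of order $2$ in $H \times \{1\}$ are $(0,1)$ and $(n/2,1)$. The odd valency of $X$, combined with $S = -S$, forces $n/2 \in S$ (it is the only possible self-inverse nonzero element of $\ZZ_n$). Therefore $(n/2,1)$ is adjacent to $(0,0)$ in $BX$, whereas $(0,1)$ is not (no loops). If $\alpha(0,1) = (n/2,1)$, this would contradict $\alpha$ being a graph automorphism that fixes $(0,0)$. Hence $\alpha(0,1) = (0,1)$, and \cref{StableIffStabilizer} gives that $X$ is stable.

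The main obstacle is identifying the structure of the relevant component: verifying that $BX_0'$ has vertex set $H \times \ZZ_2$ (which crucially uses non-bipartiteness of $X_0'$ to promote $H \times \{0\}$ to $H \times \ZZ_2$), and then pinning down the two candidate images for $(0,1)$ using the bipartition together with preservation of element orders. Once that is in place, the odd-valency assumption, by placing $n/2 \in S$, cleanly separates $(0,1)$ from $(n/2,1)$ via adjacency with $(0,0)$ in the ambient graph $BX$.
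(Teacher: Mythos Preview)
Your proof is correct and follows essentially the same route as the paper's: restrict $\alpha$ to the connected component $BX_0'$, use normality to get a group automorphism of $H\times\ZZ_2$, and then use the bipartition together with the order-$2$ condition to reduce $\alpha(0,1)$ to $\{(0,1),(\nh,1)\}$. Your final step is slightly cleaner than the paper's: you separate the two candidates by a direct adjacency argument in $BX$ (since $\nh\in S$, the vertex $(\nh,1)$ is a neighbour of $(0,0)$ while $(0,1)$ is not), whereas the paper first argues that $\alpha$ fixes $(\nh,0)$ and then that $(\nh,1)$, being the unique order-$2$ element of $S\times\{1\}$, must also be fixed.
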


\begin{proof}
We can take $X_0' \coloneqq \Cay(\langle S_0\rangle, S_0)$. Because $X$ is of odd valency, we know $\nh \in S$. By our assumptions, $\nh$ is a vertex of $X_0'$. As $X_0'$ is connected and assumed to be non-bipartite, $BX_0'$ is connected. Let $\alpha \in \Aut(BX)_{(0,0)}$. Then by our assumptions, $\alpha \in \Aut(BX_0)$. Because $\alpha$ fixes $(0,0)$, it also fixes the connected component of $BX_0$ containing it, which is $BX_0'$. As $BX_0'$ is normal, the restriction of $\alpha$ onto $BX_0'$ is a group automorphism of $\langle S_0 \rangle \times \ZZ_2$. Note that as $(0,1),(\nh,0),(\nh,1)$ are the only elements of order $2$ in $\langle S_0 \rangle \times \ZZ_2$, it follows that $\alpha$ must permute them among themselves. As $\alpha$ fixes the colors of $BX_0'$, it follows that $\alpha$ fixes $(\nh,0)$ because this is the unique element of order $2$ in the set $\langle S_0 \rangle \times 0$. Because $X$ is loopless, $0\notin S$ and the only element of order $2$ in the connection set of $BX$, which is $S \times 1$, is $(\nh,1)$. Since $\alpha$ fixes $S\times 1$ set-wise, it must hold that $\alpha$ fixes $(\nh,1)$ and consequently it also fixes $(0,1)$. It follows that $X$ is stable.
\end{proof}

Proofs in later sections assume that the following circulant graphs are known to be stable.

\begin{lem} \label{small}
Each of the following circulant graphs is stable:
	\begin{enumerate}

	\item
	valency $3$:
		\begin{enumerate}
		\item \label{small-6-2-3}
		$\Cay(\ZZ_6, \{\pm2, 3\})$,
		\end{enumerate}

	\item \label{small-12}
	valency $4$:
		\begin{enumerate}
		\item \label{small-12-2-3}
		$\Cay(\ZZ_{12}, \{\pm2, \pm3\})$,
		\item \label{small-12-3-4}
		$\Cay(\ZZ_{12}, \{\pm3, \pm4\})$,
		\end{enumerate}

	\item valency $5$:
		\begin{enumerate}
		\item \label{small-10-2-4-5}
		$\Cay(\ZZ_{10}, \{\pm2, \pm4, 5\})$,
		\item \label{small-12-1-5-6}
		$\Cay(\ZZ_{12}, \{\pm1, \pm5, 6\})$,
		\item \label{small-12-3-4-6}
		$\Cay(\ZZ_{12},\{\pm3,\pm4,6\})$,
		\end{enumerate}

	\item \label{small-val6}
	valency $6$:
		\begin{enumerate}
		\item \label{small-20-4-5-8}
		$\Cay(\ZZ_{20},\{\pm 4, \pm 5, \pm8\})$,
		\item \label{small-20-2-5-6}
		$\Cay(\ZZ_{20},\{\pm 2, \pm 5, \pm 6\})$,
		\end{enumerate}

	\item valency $7$:
		\begin{enumerate}
		\item \label{small-12-2-3-4-6}
		$\Cay(\ZZ_{12},\{\pm2,\pm3,\pm4,6\})$,
		\item \label{small-14-2-4-6-7}
		$\Cay(\ZZ_{14}, \{\pm2, \pm4, \pm6, 7\})$,
		\item \label{small-20-4-5-8-10}
		$\Cay(\ZZ_{20}, \{\pm4, \pm5, \pm8, 10\})$,
		\item \label{small-24-3-8-9-12}
		$\Cay(\ZZ_{24},\{\pm 3,\pm 8,\pm 9, 12\})$,
		\item \label{small-30-5-6-12-15}
		$\Cay(\ZZ_{30}, \{\pm5, \pm6, \pm12, 15\})$,
		\item \label{small-30-3-9-10-15}
		$\Cay(\ZZ_{30},\{\pm3,\pm9,\pm10,15\})$,
		\item \label{small-30-6-10-12-15}
		$\Cay(\ZZ_{30}, \{\pm6, \pm10, \pm12, 15\})$,
		\end{enumerate}
	\end{enumerate}
\end{lem}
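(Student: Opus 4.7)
The plan is to verify stability for each listed graph $X = \Cay(\ZZ_n, S)$ by showing, via \cref{StableIffStabilizer}, that every $\alpha \in \Aut BX$ fixing $(0,0)$ also fixes $(0,1)$. The unifying tactic is to identify a distinguished nonempty subset $S_0 \subseteq S$ with $S_0 = -S_0$ whose edges in $BX$ are preserved setwise by $\Aut BX$, and then to transfer the desired conclusion back from a subgraph via one of the general reduction lemmas proved earlier in this section.

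For the odd-valency graphs (valencies~$3$, $5$, and~$7$), we have $\nh \in S$, and we will aim to choose $S_0 \subsetneq S$ so that $\nh \in \langle S_0 \rangle$ and \cref{NormalSubgraph} applies. Normality of $BX_0'$ for the relevant connected component $X_0'$ of $\Cay(\ZZ_n, S_0)$ will typically be obtained from the Baik--Feng--Sim--Xu criterion \cref{4cyclenormal}: in each case I would list the pairs $(s,t) \in S_0 \times S_0$ with $s+t \neq 0$ and check that their sums are distinct up to the unordered pair. When \cref{4cyclenormal} fails because several pairs of connection elements share a common sum (for instance $3 + 3 = (-3) + (-3) = 6$ in $\ZZ_{12}$), I would fall back either on \cref{StableSubgraph} with a smaller already-handled subgraph, or on the exceptional-cycles argument of \cref{2aneqs+t} applied directly to~$X$. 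To justify that the chosen~$S_0$ is invariant under $\Aut BX$, the most uniform tools are counts of $4$-cycles of $BX$ through each edge and the pair \cref{2S'-BX}, \cref{order2orbit}, the latter forcing $\alpha$ to preserve the set of generators of each cyclic subgroup $\langle s \rangle$ with $|s| \in \{2,3,4,6\}$. For the even-valency cases (\ref{small-12}, \ref{small-val6}) the same plan is followed, with the additional step of ruling out that $\alpha$ swaps the two bipartition classes of~$BX$: this is automatic because $X$ is non-bipartite, but for the very smallest graphs it is often cleanest to recognize $BX$ as a Cartesian product and invoke \cref{Aut(C4timesX)}, as happens for the $3$-prism $\Cay(\ZZ_6,\{\pm 2, 3\}) \cong K_3 \cartprod K_2$.

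I expect the main obstacle to be the valency-$7$ graphs on $\ZZ_{30}$ together with $\Cay(\ZZ_{24}, \{\pm 3, \pm 8, \pm 9, 12\})$, where $S$ mixes elements of several different orders and multiple candidate subsets $S_0$ need to be tested before one is found for which \cref{4cyclenormal} succeeds. In these cases, a preliminary step using \cref{2S'-BX} and \cref{order2orbit} will likely be needed to force $\alpha$ to preserve the partition of $S$ by element order, after which a suitable $S_0$ (typically the set of elements of a single order, or the preimage of a small subgroup) becomes visibly invariant and the reduction via \cref{NormalSubgraph} or \cref{2aneqs+t} goes through.
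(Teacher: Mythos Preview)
The paper's proof is a one-line computer verification: it supplies short \textsc{Magma} and \textsc{SageMath} scripts that, for each of the fifteen listed graphs, construct $BX$ and check $|\Aut BX| = 2\,|\Aut X|$. No theoretical argument is given or needed.

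Your proposal is therefore a genuinely different route: you aim to establish stability of each graph by hand, isolating an $\Aut BX$-invariant subset $S_0 \subseteq S$ and then invoking \cref{NormalSubgraph}, \cref{StableSubgraph}, \cref{2aneqs+t}, or a Cartesian-product description via \cref{Aut(C4timesX)}. This is a reasonable toolkit, and for several of the listed graphs (e.g.\ the $3$-prism, or the valency-$5$ graphs where \cref{2S'-BX} quickly pins down an invariant $S_0$) such an argument can indeed be pushed through. What a theoretical proof would buy is independence from software and some structural insight; what the paper's computational proof buys is brevity and certainty for a lemma whose content is literally a finite list of explicit small graphs.

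That said, what you have written is a plan, not a proof. For a lemma of this shape the entire burden is the fifteen individual verifications, and you have not carried any of them out: you say you ``would list the pairs'' to check \cref{4cyclenormal}, that a suitable $S_0$ ``becomes visibly invariant,'' and that you ``expect'' the $\ZZ_{24}$ and $\ZZ_{30}$ cases to be the hard ones. Until those checks are actually performed, there is no way to know whether each case closes; in particular, for graphs like $\Cay(\ZZ_{24},\{\pm3,\pm8,\pm9,12\})$ or $\Cay(\ZZ_{30},\{\pm5,\pm6,\pm12,15\})$ it is not at all obvious a priori that \cref{4cyclenormal} applies to any natural $S_0$, or that \cref{order2orbit} forces enough invariance to get started. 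A hand proof along your lines would likely run to several pages, and the paper reasonably opts for the sub-second computation instead.
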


\begin{proof}
This is easily verified by computer (in less than a second).
For example, the MAGMA code in \cref{MagmaCode} can be executed on \url{http://magma.maths.usyd.edu.au/calc/}, or the sagemath code in \cref{SagemathCode} can be executed on \url{https://cocalc.com}\,.
(Source code of these programs can also be found in the ``ancillary files'' directory of this paper on the \href{https://www.arxiv.org}{arxiv}.)
\end{proof}

\begin{figure}
\begin{verbatim}
IsStable := function(n, S)
    V := {0 .. (n - 1)};
    E := {{v, (v + s) mod n} : v in V, s in S};
    X := Graph<V|E>;
    K2 := CompleteGraph(2);
    BX := TensorProduct(X, K2);
    AutX := AutomorphismGroup(X);
    AutBX := AutomorphismGroup(BX);
    return  #AutBX  eq  2 * #AutX;
end function;

/*Below we give two examples of how to run this code.*/
/*Similarly, stability of any other circulant graph can*/
/*be tested by inputting its order and its connection set.*/

print IsStable(6,{2,-2,3});
print IsStable(12,{2,-2,3,-3});
\end{verbatim}
\caption{MAGMA code to verify \cref{small}}
\label{MagmaCode}
\end{figure}

\begin{figure}
\begin{verbatim}
def IsStable(n, S):
    V = [0 .. (n - 1)]
    E = [[v, (v + s) % n] for v in V for s in S]
    X = Graph([V, E])
    K2 = graphs.CompleteGraph(2)
    BX = X.categorical_product(K2)
    AutX = X.automorphism_group()
    AutBX = BX.automorphism_group()
    return  AutBX.order()  ==  2 * AutX.order()
    
# Below we give two examples of how to run this code.
# Similarly, stability of any other circulant graph can
# be tested by inputting its order and its connection set.

print(IsStable(6, [2, -2, 3]))
print(IsStable(12, [2, -2, 3, -3]))
\end{verbatim}
\caption{Sagemath code to verify \cref{small}}
\label{SagemathCode}
\end{figure}

\begin{lem}[Surowski {\cite[Prop.~2.1]{Surowski}}] \label{Surowski}
Let $X$ be a twin-free, connected, vertex-transitive graph of diameter at least~$4$. 
If every edge of~$X$ is contained in a triangle, then $X$ is stable.
\end{lem}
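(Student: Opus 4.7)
The plan is to show that for every $\alpha \in \Aut BX$ with $\alpha(0,0) = (0,0)$, we also have $\alpha(0,1) = (0,1)$; this implies stability by the standard reduction (the circulant case of which is \cref{StableIffStabilizer}). Since every edge of $X$ is in a triangle, $X$ is non-bipartite, so $BX$ is connected with a canonical bipartition that $\alpha$ preserves. Writing $\alpha(v,0) = (\pi(v),0)$ and $\alpha(v,1) = (\rho(v),1)$, the $BX$-adjacency relation translates to the \emph{two-fold automorphism} condition $u \sim v \iff \pi(u) \sim \rho(v)$ on $V(X)$, with $\pi(0) = 0$; the goal is $\rho(0) = 0$.

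The approach is to pin down $(0,1)$ intrinsically, via its relation to $(0,0)$, through two distance properties. First, $d_{BX}((0,0),(0,1)) = 3$: the closed odd walk of length~$3$ at~$0$ coming from any triangle through~$0$ in $X$ forces this. Second, each neighbor $(v,1)$ of $(0,0)$ in $BX$ lies at distance~$2$ from $(0,1)$, via the walk $(v,1) - (w,0) - (0,1)$ where $w$ is a common neighbor of $0$ and $v$ (which exists because $\{0,v\}$ lies in a triangle). Since $\alpha$ preserves $BX$-distance and fixes $(0,0)$, the vertex $(a,1) := \alpha(0,1)$ must inherit both properties. The first restricts $a$ to $0$ or to a vertex with $d_X(0,a) \in \{2,3\}$; the second yields the inclusion $N_X(0) \subseteq B_X^{\le 2}(a)$, the closed $X$-ball of radius $2$ about $a$.

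To conclude $a = 0$, I would argue by contradiction. Assuming $a \ne 0$, the plan is to use the inclusion $N_X(0) \subseteq B_X^{\le 2}(a)$ together with vertex-transitivity to iteratively produce pairs $(v_i, a_i)$ with $d_X(v_i, a_i) = d_X(0,a)$ and analogous inclusions $N_X(v_i) \subseteq B_X^{\le 2}(a_i)$; twin-freeness lets one track how neighborhoods propagate along this chain, while the triangle condition provides enough internal adjacencies in each BFS layer around $0$ to maintain the propagation. Ultimately the existence of a vertex at $X$-distance $4$ from $0$, guaranteed by the diameter hypothesis, should conflict with the accumulated local density and yield a contradiction.

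The main obstacle is precisely this last step: lifting the single pair-inclusion $N_X(0) \subseteq B_X^{\le 2}(a)$ to a diameter-bounding global property is not automatic from vertex-transitivity alone, because the stabilizer of a vertex need not act transitively on vertices at a given distance. Executing the iteration requires a careful choice of automorphisms, and the twin-freeness hypothesis enters in an essential way to identify the ``correct'' $a_i$ at each stage via the uniqueness of neighborhoods.
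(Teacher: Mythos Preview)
The paper does not prove this lemma; it is quoted from Surowski's paper with a citation and no argument, so there is no in-paper proof to compare against. I can only assess whether your outline constitutes a proof.

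Your setup is correct: reducing to $\alpha(0,1)=(0,1)$ for $\alpha$ fixing $(0,0)$, and the observation that the triangle hypothesis forces $d_{BX}\bigl((0,0),(0,1)\bigr)=3$ and puts every $BX$-neighbor of $(0,0)$ at $BX$-distance $2$ from $(0,1)$. But, as you yourself concede, these two properties do \emph{not} characterize $(0,1)$: they only yield $d_X(0,a)\in\{0,2,3\}$ together with $N_X(0)\subseteq B_2^X(a)$, and the latter inclusion can hold for many $a\neq 0$. Your proposed ``iteration'' is not a proof: you never specify which automorphisms are applied, how the pairs $(v_i,a_i)$ are produced, where twin-freeness is actually invoked, or why a vertex at distance~$4$ eventually contradicts anything. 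This is the substantive part of the argument, and it is missing.

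One concrete strengthening you overlooked: every $BX$-neighbor of $(0,1)$ lies at $BX$-distance exactly $2$ from $(0,0)$, so after applying $\alpha$ you get the dual inclusion $N_X(a)\subseteq B_2^X(0)\setminus\{0\}$. Writing $\alpha(v,0)=(\pi(v),0)$ and $\alpha(v,1)=(\rho(v),1)$, one then has $\pi(N_X(0))=N_X(a)$, $\rho(N_X(0))=N_X(0)$, and $\pi(N_X(w))=N_X(\rho(w))$ for every $w\in N_X(0)$; this interplay between $\pi$ and $\rho$ across successive distance spheres is much more rigid than the single inclusion you recorded, and is the kind of structure Surowski's argument exploits. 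Even so, making the diameter~$\ge 4$ hypothesis deliver a contradiction requires a genuine argument that your outline does not supply. You should consult Surowski's original paper for the details.
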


\begin{cor}\label{CycWr}
Let $k \geq 3$ be an integer. The graph $X_k := C_k \wr K_2$ is stable if and only if $k\neq 4$.
\end{cor}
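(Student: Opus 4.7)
Under the identification $(i,a)\leftrightarrow i+ak\pmod{2k}$ between $V(C_k\wr K_2)$ and $\ZZ_{2k}$, we have $X_k\cong\Cay(\ZZ_{2k},\{\pm 1,\pm(k-1),k\})$. The plan is to treat $k=3$, $k=4$, and $k\geq 5$ separately. For $k=3$ the identification gives $X_3\cong K_6$, which is stable by \cref{KnStable}. For $k=4$ one obtains $X_4\cong\Cay(\ZZ_8,\{\pm 1,\pm 3,4\})$, and instability follows by directly verifying Wilson type~\pref{Wilson-C3} with $H=2\ZZ_8$: here $R=\{s\in S:s+H\not\subseteq S\}=\{4\}$, $d=\gcd(R\cup\{n\})=4$, $n/d=2$ is even, $r/d=1$ is odd for $r\in R$, and $H\not\subseteq d\ZZ_n=\{0,4\}$.

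The heart of the argument is $k\geq 5$, handled via \cref{StableIffStabilizer}. Let $\alpha\in\Aut BX_k$ with $\alpha(0,0)=(0,0)$; the aim is to show $\alpha(0,1)=(0,1)$. Since $\alpha$ preserves the bipartition of $BX_k$, it maps layer~$0$ to itself. The number of common neighbors of $(0,0)$ and $(v,0)$ in $BX_k$ equals $|\{(s,t)\in S\times S:s+t=v\}|$, and a direct enumeration of all such sums (with $S=\{\pm 1,\pm(k-1),k\}$) yields: $v=0$ receives $5$ pairs (from $\{1,-1\}$, $\{k-1,-(k-1)\}$, and $\{k,k\}$); $v=k$ receives $4$ pairs (from $\{1,k-1\}$ and $\{-1,-(k-1)\}$, using $-k=k$); each $v\in\{\pm 1,\pm 2,\pm(k-1),\pm(k-2)\}$ receives $2$ pairs; every other $v$ receives $0$.

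The key observation is that for $k\geq 5$ the ten enumerated elements $\{0,k,\pm 1,\pm 2,\pm(k-1),\pm(k-2)\}$ are pairwise distinct in $\ZZ_{2k}$ (a routine check; the borderline case $k=4$ fails precisely because then $\pm(k-2)=\pm 2$ collide and $v=\pm 2$ also acquire count $4$, mirroring the actual instability of $X_4$). Thus $v=k$ is the unique nonzero element with four common neighbors, so $\alpha(k,0)=(k,0)$. To conclude, we compute $N((0,0))\triangle N((k,0))$: since $N((0,0))=S\times\{1\}$, and the identities $k+1=-(k-1)$, $k+(-1)=k-1$, $k+(k-1)=-1$, $k+(-(k-1))=1$, $k+k=0$ in $\ZZ_{2k}$ give $N((k,0))=\{\pm 1,\pm(k-1),0\}\times\{1\}$, the symmetric difference is $\{(k,1),(0,1)\}$. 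As $\alpha$ fixes $(0,0)$ and $(k,0)$, it permutes this two-element set; and since $(k,1)\in N((0,0))$ while $(0,1)\notin N((0,0))$, and $\alpha$ preserves $N((0,0))$ setwise, $\alpha$ must fix both $(k,1)$ and $(0,1)$ individually. By \cref{StableIffStabilizer}, $X_k$ is stable. The main step — and the place where $k\geq 5$ is essential — is the common-neighbor enumeration and the pairwise distinctness verification of those ten residues in $\ZZ_{2k}$.
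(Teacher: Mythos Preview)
Your proof is correct, and it takes a genuinely different route from the paper's. The paper invokes Surowski's triangle-and-diameter criterion (\cref{Surowski}) to handle all $k\geq 8$ at once, and then relies on a computer check for $k\in\{3,5,6,7\}$ (leaving the instability at $k=4$ implicit). By contrast, you give a uniform, elementary argument for all $k\geq 5$: a direct common-neighbor count in $BX_k$ singles out $(k,0)$ as the unique nonzero layer-$0$ vertex with four common neighbors with $(0,0)$, and the symmetric-difference trick then pins down $(0,1)$. This avoids both Surowski's lemma and computer verification, and it makes transparent exactly why $k=4$ is the exceptional case---the collision $\pm(k-2)=\pm 2$ destroys the uniqueness of the count~$4$. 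Your handling of $k=3$ (via $K_6$) and $k=4$ (via an explicit verification of Wilson type~\pref{Wilson-C3}) is also cleaner than the paper's treatment. Incidentally, your connection set $\{\pm 1,\pm(k-1),k\}$ is the correct one for the $5$-valent graph $C_k\wr K_2$; the paper's displayed set $\{\pm 1,k\pm 1\}$ omits~$k$.
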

\begin{proof}
It is clear that every edge of $X_k$ lies on a triangle. As the diameter of $X_k$ is equal to the diameter of $C_k$, for $k\geq 8$ it follows from \cref{Surowski}
that $X_k$ is stable. It is easily verified by computer that $X_k$ is stable for all $k \in \{3,..,7\}, k\neq 4$. (For example, we can note that $X_k \cong \Cay(\ZZ_{2k},\{\pm 1, k\pm 1\})$ for all $k\geq 3$ and apply the code in \cref{MagmaCode} or \cref{SagemathCode}.) The desired conclusion follows.
\end{proof}

\section{Unstable circulants of valency \texorpdfstring{$\le 4$}{at most 4}} \label{val4Sect}

\begin{thm} \label{4orless}
Every nontrivially unstable circulant graph of valency $\le 4$ has Wilson type~\pref{Wilson-C4}.
\end{thm}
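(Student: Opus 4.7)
My plan is to treat each valency separately, using \cref{4cyclenormal} and \cref{NormalCover} to reduce instability to structural cases.

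\textbf{Valencies 1 and 2.} A connected circulant of valency at most~2 is $K_2$, an even cycle (both bipartite), or an odd cycle (stable by direct check); no nontrivially unstable examples occur. \textbf{Valency 3.} Let $X = \Cay(\ZZ_n, \{\pm a, n/2\})$ be nontrivially unstable. Then $n$ is even by \cref{OddCirculant}, and $BX$ is not normal by \cref{OddValencyNotNormal}. The non-zero pairwise sums of the connection set of $BX$ are $\pm 2a$ and $\pm a + n/2$; by \cref{4cyclenormal}, non-normality forces either $|a| = 4$ (the coincidence $2a = -2a$) or $3a = \pm n/2$ (the coincidence $2a = -a + n/2$). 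Combined with the connectedness condition $\gcd(a, n/2) = 1$, the first forces $n = 4$ and $X \cong K_4$ (stable by \cref{KnStable}), while the second forces $n = 6$ and $X \cong K_{3,3}$ (bipartite). Neither is nontrivially unstable, so valency~3 contributes nothing.

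\textbf{Valency 4.} Let $X = \Cay(\ZZ_n, \{\pm a, \pm b\})$ be nontrivially unstable, so $n$ is even. By \cref{nottwinfree}\pref{nottwinfree-d=4}, twin-freeness excludes $K_{4,4}$ (bipartite) and $C_\ell \wr \overline{K_2}$ (which has twins). If $BX$ is a normal Cayley graph, \cref{NormalCover} gives Wilson type~\pref{Wilson-C4} directly. Otherwise, \cref{4cyclenormal} yields a sum coincidence; enumerating the non-zero pairwise sums $\pm 2a, \pm 2b, \pm(a \pm b)$ and excluding the degeneracies $a = \pm b$, $2a = 0$, $2b = 0$, we arrive at three pairwise disjoint cases: (B) $a \pm b = n/2$; (C) $b = \pm 3a$; (A) $|a| = 4$ (up to swapping $a$ and $b$). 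In case~(B), after $b \mapsto -b$ if needed we have $b = n/2 - a$, and a direct check shows $m = -1$ is a unit satisfying $(n/2) + mS = S$, yielding \pref{Wilson-C4}. In case~(C), $a$ is a unit by connectedness, and $X \cong \Cay(\ZZ_n, \{\pm 1, \pm 3\})$ has only odd generators, hence is bipartite, a contradiction.

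Case~(A) is the main obstacle. Write $a = n/4$ (after $a \mapsto -a$ if needed), $n = 4k$, with $\gcd(k, b) = 1$. If $k$ and $b$ are both odd, all of $S$ is odd and $X$ is bipartite. If $k$ is even (forcing $b$ odd), a routine calculation using $2kb \equiv 2k \pmod{4k}$ for $b$ odd shows that $m = 1 + n/2$ is a unit witnessing \pref{Wilson-C4}. The remaining subcase---$k$ odd, $b$ even---reduces, after multiplying by the inverse of the odd part of $b$, to either $\Cay(\ZZ_{4k}, \{\pm k, \pm 4\})$ (when $4 \mid b$), where the element $c = 4$ has odd order $k$ and the stability is given by \cref{2aneqs+t} (the sum $2c = 8$ is easily checked to avoid $s + t$ for all $s,t \in S \setminus \{c\}$, and $S$ contains no element of order $2k$), or to $\Cay(\ZZ_{4k}, \{\pm k, \pm 2\})$ (when $b \equiv 2 \pmod{4}$), where \pref{Wilson-C4} genuinely fails---as one checks by showing the parity constraint forces $mb$ even while $\pm k - n/2$ is odd. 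For this last family, I would apply \cref{order2orbit} to the order-$4$ elements $\pm k$ (whose $\langle k \rangle$-generators are already in $S$) to show that every automorphism of $BX$ preserves the set of $\pm k$-edges, which form $k$ vertex-disjoint 4-cycles in $X$; then a combinatorial argument on how the $\pm 2$-edges link these 4-cycles, supplemented by the direct verification of the base case $k = 3$ from \cref{small}\pref{small-12-2-3}, forces any $\alpha \in \Aut BX$ fixing $(0,0)$ to also fix $(0,1)$, proving stability via \cref{StableIffStabilizer} and contradicting nontrivial instability. The main obstacle is this last subcase, where \pref{Wilson-C4} fails, no single lemma from \cref{StableSect} directly applies, and the stability argument requires combining \cref{order2orbit} with a careful analysis of the $C_4$-block structure induced by the order-4 element.
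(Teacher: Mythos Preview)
Your overall architecture matches the paper's: split on whether $BX$ is normal (then \cref{NormalCover} gives \pref{Wilson-C4}), and otherwise invoke \cref{4cyclenormal} to produce a sum coincidence and analyze cases. Your valency~$\le 3$ argument is correct but different---you use the non-normality machinery, whereas the paper simply identifies cubic nonbipartite circulants as prisms or M\"obius ladders and counts automorphisms of the double cover directly.

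For valency~$4$, most of your case analysis is sound. A small remark: your case~(B), $a + b = \nh$, forces $S + \nh = S$, so $X$ has twins and the case is vacuous; your \pref{Wilson-C4} verification there is harmless but unnecessary. Your case~(C) and the $k$~even sub-case of~(A) are correct. In the $k$~odd, $4 \mid b$ sub-case, the reduction to $\Cay(\ZZ_{4k},\{\pm k,\pm 4\})$ is valid (though ``multiply by the inverse of the odd part of~$b$'' only lands on a power of~$2$, not necessarily~$4$; a further CRT step is needed), and your application of \cref{2aneqs+t} is correct.

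The genuine gap is the final sub-case: $k$~odd, $b \equiv 2 \pmod{4}$, $k \ge 5$. You correctly deduce from \cref{order2orbit} that $k$-edges are invariant (since $\phi(2k) > 2$ forces $S$ to miss some generator of $\langle 2\rangle$), but the promised ``combinatorial argument on how the $\pm 2$-edges link these 4-cycles'' is never given, and \cref{2aneqs+t} fails here because $S$ does contain elements of order~$2k$ whose doubled value is not a sum from $S \setminus \{b\}$. The paper avoids your $b \bmod 4$ split entirely: for $|a| = 4$ and $k$~odd it shows directly that $\langle (a,1)\rangle \cap \langle (b,1)\rangle = 0$, so $BX \cong C_4 \cartprod C_{n/2}$, whence $|{\Aut BX}| = 8n$ by \cref{Aut(C4timesX)}; combined with the observation that $|\langle a\rangle \cap \langle b\rangle| \le 2$ yields an automorphism of~$\ZZ_n$ fixing~$a$ and inverting~$b$, giving $|{\Aut X}| \ge 4n$, and stability follows. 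This uniform argument is what your sketch is reaching toward but does not supply.
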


The union of the following two results provides a more precise formulation of the above \lcnamecref{4orless}.

\begin{prop}
\label{val3}
There are no nontrivially unstable circulant graphs of valency $\le 3$.
\end{prop}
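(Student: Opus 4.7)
The plan is to reduce to the valency-$3$ case, and then to apply \cref{4cyclenormal} to the canonical bipartite double cover $BX$: once $BX$ is shown to be a normal Cayley graph, \cref{OddValencyNotNormal} immediately forces $X$ not to be nontrivially unstable.

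First I dispose of the low valencies. By \cref{OddCirculant} I may assume $n$ is even. If $|S| \le 2$, a short case analysis shows that $X$ is edgeless, a perfect matching, or a disjoint union of cycles, and since $n$ is even every such graph is either disconnected or bipartite, hence trivially unstable. So I may assume $|S| = 3$. Because $S = -S$ has odd cardinality and $0 \notin S$, the unique involution $n/2$ of $\ZZ_n$ must lie in $S$, so I may write $S = \{a, -a, n/2\}$ with $a \notin \{0, n/2\}$. I then assume that $X$ is connected, non-bipartite, and twin-free (otherwise $X$ is trivially unstable and there is nothing to prove).

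The heart of the argument is a short calculation in $\ZZ_n \times \ZZ_2$. The nonzero pairwise sums of elements of the connection set $\widetilde S$ of $BX$ (see \fullcref{CayleyDefn}{BX}) all have second coordinate~$0$, and their first coordinates are $2a$, $-2a$, $a + n/2$, and $-a + n/2$. Using that $2a \ne 0$ and $a \ne n/2$, these four values are pairwise distinct in $\ZZ_n$ unless $4a = 0$ or $3a = n/2$. When they are distinct, the hypothesis of \cref{4cyclenormal} is satisfied, so $BX$ is a normal Cayley graph on $\ZZ_n \times \ZZ_2$. Since $X$ has odd valency, \cref{OddValencyNotNormal} then forces $X$ to be stable.

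It remains to deal with the two exceptional configurations. If $4a = 0$ with $a \notin \{0, n/2\}$, then $a = \pm n/4$, so $\gcd(a, n/2) = n/4$, and connectedness forces $n = 4$, giving $X = K_4$, which is stable by \cref{KnStable}. If $3a = n/2$, the order of $a$ must be~$6$ (the cases $|a| \in \{1,2,3\}$ are incompatible with $3a = n/2 \ne 0$), so $a = \pm n/6$, and connectedness again forces $n = 6$ and $X = K_{3,3}$, which is bipartite. The main obstacle is precisely this bookkeeping: one must enumerate each way that the sum-distinctness in \cref{4cyclenormal} can fail and verify that every failure either violates connectedness or collapses $X$ to $K_4$ or $K_{3,3}$, both of which have already been accounted for. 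Once this check is made, the argument is complete.
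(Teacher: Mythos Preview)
Your proof is correct, but it takes a genuinely different route from the paper's. The paper argues structurally: for valency~$3$ it identifies every connected nonbipartite cubic circulant as either an odd prism or a nonbipartite M\"obius ladder, observes that in both cases $BX \cong K_2 \cartprod C_n$, and then computes $|\Aut BX|$ directly via \cref{Aut(C4timesX)} (handling $K_4$ separately). You instead work algebraically inside $\ZZ_n \times \ZZ_2$: you verify the sum-collision hypothesis of \cref{4cyclenormal} for $\widetilde S$, conclude that $BX$ is normal, and invoke \cref{OddValencyNotNormal}. Your approach is more in the spirit of the machinery the paper develops for valencies $5$--$7$, and it avoids appealing to the Cartesian-product automorphism theorem; the paper's approach is more elementary and self-contained, needing neither \cref{4cyclenormal} nor \cref{OddValencyNotNormal}. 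Both routes collapse the exceptional cases to $K_4$ and $K_{3,3}$, and your bookkeeping there (the cases $4a=0$ and $3a=n/2$) is accurate.
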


\begin{proof}
We consider twin-free, connected, nonbipartite, circulant graphs of each valency~$\le 3$.

(valency $0$) The one-vertex trivial graph~$K_1$ is stable, because $BK_1 = \overline{K_2}$, and $|{\Aut \overline{K_2}}| = 2 = 2 \, |{\Aut K_1}|$.

(valency $1$) $K_2$ is bipartite.

(valency $2$) If $C_n$ is a nonbipartite cycle, then $n$ is odd, so $BC_n \cong C_{2n}$, so 
	\[ |{\Aut BC_n}| = |{\Aut C_{2n}}| =  2 \cdot 2n = 2 \cdot |{\Aut C_n}| , \]
so $C_n$ is stable.

(valency $3$) A connected, nonbipartite, circulant graph~$X$ of valency~$3$ is either an odd prism or a nonbipartite M\"obius ladder. In either case, the canonical double cover is the even prism $K_2 \cartprod C_n$, where $n = |V(X)|$. It is easy to check that $K_4$ is stable (see \cref{KnStable}). And the following calculation (which uses \cref{Aut(C4timesX)}) shows that $X$ is also stable when $n > 4$:
	\[ |{\Aut BX}| = |{\Aut (K_2 \cartprod C_n)}| = |S_2 \times {\Aut C_n}| = 2 \cdot 2n \le 2 |{\Aut X|} 
	. \qedhere \]
\end{proof}

\begin{thm} \label{val4}
A circulant graph\/ $\Cay(\ZZ_n, \{\pm a, \pm b\})$ of valency~$4$ is unstable if and only if either it is trivially unstable, or one of the following conditions is satisfied\/ \textup(perhaps after interchanging $a$ and~$b$\textup)\textup:
\noprelistbreak
	\begin{enumerate}
	\item \label{val4-k2=1}
	$n \equiv 2 \pmod{4}$, $\gcd(a,n) = 1$, and $b = m a + (n/2)$, for some $m \in \ZZ_n^\times$, such that $m^2 \equiv \pm1 \pmod{n}$,
	or
	\item \label{val4-gcd}
	$n$ is divisible by~$8$ and $\gcd \bigl( |a|, |b| \bigr) = 4$.
	\end{enumerate}
In both of these cases, $X$ has Wilson type~\pref{Wilson-C4}.
\end{thm}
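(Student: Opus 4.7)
The plan is to prove both directions separately. For sufficiency we verify that each family yields Wilson type \pref{Wilson-C4}, hence instability. For family~\pref{val4-k2=1}, define $\sigma(s) = n/2 + ms$; since $m \in \ZZ_n^\times$ and $n$ is even, $m$ is odd, so $(1+m)(n/2) \equiv 0 \pmod n$. From $b = ma + n/2$ one reads $\sigma(a) = b$ directly, and $\sigma(b) = (1+m)(n/2) + m^2 a \equiv m^2 a \equiv \pm a \pmod n$, so $\sigma(S) = S$. For family~\pref{val4-gcd}, write $|a| = 4u$ and $|b| = 4v$ with $\gcd(u,v) = 1$, and $n = 4uvw$ with $2 \mid uvw$ (forced by $8 \mid n$); we construct $m \in \ZZ_n^\times$ via CRT so that $(m - \epsilon_a) a \equiv n/2$ and $(m - \epsilon_b) b \equiv n/2$ hold simultaneously for appropriate $\epsilon_a, \epsilon_b \in \{\pm 1\}$, with the compatibility of the two congruences modulo $4\gcd(u,v) = 4$ verified by a short parity analysis on $u$ and $v$.

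For necessity, assume $X$ is nontrivially unstable, so by \cref{OddCirculant}, $n$ is even. If $BX$ is normal, then by \cref{NormalCover}, $X$ has Wilson type \pref{Wilson-C4}, i.e., some $\sigma(s) = n/2 + ms$ with $m \in \ZZ_n^\times$ preserves $S$. Since $m$ is odd, $\sigma(-s) = -\sigma(s)$, so $\sigma$ permutes the partition $\bigl\{\{\pm a\}, \{\pm b\}\bigr\}$. In the \emph{swap} subcase, WLOG $\sigma(a) = b$, giving $b = ma + n/2$ and $\sigma(b) = m^2 a \in \{\pm a\}$. A parity check rules out $4 \mid n$ (else $a$ and $b$ share parity, so $X$ is bipartite or disconnected), hence $n \equiv 2 \pmod 4$; after possibly interchanging $a$ and $b$ (via $m \mapsto m^{-1}$), $\gcd(a,n) = 1$, so $m^2 \equiv \pm 1 \pmod n$, giving family~\pref{val4-k2=1}. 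In the \emph{preserve} subcase, $(m - \epsilon_a) a \equiv (m - \epsilon_b) b \equiv n/2 \pmod n$; writing $m - \epsilon = 2c$ and noting $ca \equiv n/4$ forces $4 \mid n$ and $4 \mid |a|, |b|$. A further parity check rules out $8 \nmid n$ (else $n/4$ is odd, forcing $a, b$ both odd and $X$ bipartite), so $8 \mid n$. The simultaneous existence of $m \in \ZZ_n^\times$ satisfying both congruences then yields $\gcd(|a|, |b|) = 4$, giving family~\pref{val4-gcd}.

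If $BX$ is not normal, by \cref{4cyclenormal} some coincidence occurs among $\pm 2a, \pm 2b, \pm(a+b), \pm(a-b)$. Enumerating (given $\pm a, \pm b$ are distinct and nonzero), the possibilities are: $|a| = 4$ or $|b| = 4$; $a \pm b = n/2$; or $b \equiv \pm 3a$ or $a \equiv \pm 3b$. If $a \pm b = n/2$: when $4 \mid n$, $a$ and $b$ share parity so $X$ is bipartite or disconnected, whereas when $n \equiv 2 \pmod 4$ the relation fits family~\pref{val4-k2=1} with $m = \mp 1$. If $b = \pm 3a$ (or symmetrically): connectivity gives $\gcd(a,n) = 1$, so up to scaling $X \cong \Cay(\ZZ_n, \{\pm 1, \pm 3\})$, whose connection set is entirely odd and hence $X$ is bipartite whenever $n$ is even. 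If $|a| = 4$ (so $a = n/4$, $4 \mid n$): nonbipartiteness forces $b$ of opposite parity to $n/4$, and a case analysis (using \cref{small} for small cases and an order-based argument otherwise) shows the nontrivially unstable cases satisfy $\gcd(|a|, |b|) = 4$ and $8 \mid n$, matching family~\pref{val4-gcd}.

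The main obstacle will be the preserve subcase of the normal case: extracting both $8 \mid n$ and $\gcd(|a|, |b|) = 4$ requires a delicate parity and order analysis, together with verifying that the two simultaneous congruences $(m - \epsilon_a) a \equiv n/2$ and $(m - \epsilon_b) b \equiv n/2$ admit a common solution in $\ZZ_n^\times$. The $|a| = 4$ subcase of the non-normal case also requires careful bookkeeping of several overlapping subcases to ensure each is either in family~\pref{val4-gcd} or provably stable.
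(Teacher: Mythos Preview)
Your overall strategy matches the paper's: split on whether $BX$ is normal (equivalently, whether the unstable automorphism is a group automorphism), use \cref{NormalCover} in the normal case to extract Wilson type~\pref{Wilson-C4} and analyse the resulting permutation of $\{\pm a,\pm b\}$, and use \cref{4cyclenormal} in the non-normal case to force a coincidence among the sums $s+t$. The sufficiency direction is also essentially the paper's. However, two steps in your necessity argument are genuine gaps.

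\textbf{The preserve subcase.} You write ``the simultaneous existence of $m\in\ZZ_n^\times$ satisfying both congruences then yields $\gcd(|a|,|b|)=4$'', but you do not say why. From $(m-\epsilon_a)a\equiv(m-\epsilon_b)b\equiv n/2$ you only get $4\mid\gcd(|a|,|b|)$. To obtain the reverse divisibility you must first rule out $\epsilon_a=\epsilon_b$: in that case both $|a|$ and $|b|$ divide $2(m-\epsilon_a)$, hence $n=\operatorname{lcm}(|a|,|b|)$ does, forcing $m\in\{\epsilon_a,\epsilon_a+n/2\}$; the first gives $0=n/2$, the second forces $a$ and $b$ both odd and hence $X$ bipartite. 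Only then, with $\epsilon_a=-\epsilon_b$, does $\gcd(|a|,|b|)\mid\gcd(2(m-1),2(m+1))\le 4$ follow. The paper achieves this directly by composing the automorphism with negation to force $\epsilon_a=1$, $\epsilon_b=-1$.

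\textbf{The case $|a|=4$ with $n\equiv 4\pmod 8$.} This is where your proof is most incomplete. You say only that ``a case analysis (using \cref{small} for small cases and an order-based argument otherwise)'' shows stability, but no such argument is indicated, and \cref{small} covers just one value of~$n$. The paper's argument is a specific structural computation: when $n=4k$ with $k$ odd and $|a|=4$, one checks that $\langle(a,1)\rangle\cap\langle(b,1)\rangle=\{(0,0)\}$, so $BX\cong C_4\mathbin{\square} C_{n/2}$, whence by \cref{Aut(C4timesX)} one has $|\Aut BX|=8n$; combined with $|\Aut X|\ge 4n$ (there is an automorphism of~$\ZZ_n$ fixing $a$ and inverting $b$), this contradicts instability. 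Nothing in your outline supplies this step, and it is the crux of the non-normal branch.

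A minor point: in the non-normal branch the cases $a\pm b=n/2$ always give $S+n/2=S$, so $X$ has twins and is trivially unstable; invoking family~\pref{val4-k2=1} with $m=\mp1$ is not wrong, but it obscures that these cases are vacuous under your standing assumption.
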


\begin{proof}
For convenience, let $\nh = n/2$ (see \cref{nh}).

\medbreak

($\Leftarrow$) Condition~\pref{val4-k2=1} clearly implies that $X$ has Wilson type~\pref{Wilson-C4}. (The assumption that $m^2 \equiv \pm1 \pmod{n}$ implies $mb + \nh = \pm a$.) So $X$ is unstable. 

We may now assume condition~\pref{val4-gcd} holds, which means $\gcd \bigl(|a|, |b| \bigr) = 4$.  
We may also assume that $X$ is connected (for otherwise it is trivially unstable). This implies that we may assume $a$ is odd (perhaps after interchanging $a$ and~$b$), which means $n/|a|$ is odd. Since $\gcd \bigl( |a|, |b| \bigr) = 4$, it also implies that $|b| = 4 n / |a|$.  Write $|a| = 2^r \ell$, where $\ell$ is odd. 

Now, since $\ell$ and $n/|a|$ are odd, we have $\gcd(2^r, \ell) = \gcd(2^r, n/|a|) = 1$. Also note that 
	\[ 4 = \gcd \bigl( |a|, |b| \bigr) = \gcd \bigl( 2^r \ell, 4 n / |a| \bigr) = 4 \gcd \bigl( 2^{r-2} \ell, n / |a| \bigr) . \]
Therefore $2^r$, $\ell$, and $n/|a|$ are pairwise relatively prime, so we may choose $m \in \ZZ_n^\times$, such that 
	\[ \text{$m \equiv 2^{r-1} + 1 \pmod{2^r}$, 
	\ $m \equiv 1 \pmod{\ell}$, 
	\ and 
	\ $m \equiv -1 \pmod{n/|a|}$.} \]
Then:
	\begin{itemize}
	\item[($a$)] $2(m-1)$ is divisible by $2^r \ell$, but $m-1$ is not divisible by $2^r \ell$, so $2(m-1)a = 0$, but $(m-1)a \neq 0$. Therefore $(m-1)a = \nh$, so $ma + \nh = a \in S$,
	and
	\item[($b$)] $2(m+1)$ is divisible by $4n/|a|$, but $m+1$ is not divisible by $4n/|a|$ (because $m + 1 \equiv 2 \pmod{4}$). Since $|b| = 4n/|a|$, this implies that $2(m+1)b = 0$, but $(m+1) b \neq 0$. Therefore $(m + 1)b = \nh$, so $mb + \nh = -b \in S$.
	\end{itemize}
So $m S + \nh = S$, which means that $X$ has Wilson type~\pref{Wilson-C4} (and is therefore unstable).

\medbreak

($\Rightarrow$)
Assuming that $X$ is nontrivially unstable, we will show that it satisfies the conditions of~\pref{val4-k2=1} or~\pref{val4-gcd}.
Note that $n$ must be even (see \cref{OddCirculant}). Since $X$ is connected, and not bipartite, the subgroup $2\ZZ_n$ must contain exactly one of the elements of $\{a,b\}$.

Let $\alpha$ be an automorphism of~$BX$ that fixes $(0,0)$, and is not in $\Aut X \times S_2$. Since $\ZZ_n \times \{0\}$ and $\ZZ_n \times \{1\}$ are the bipartition sets of $BX$, we know that each of these sets is $\alpha$-invariant.

\refstepcounter{caseholder} 

\begin{case} \label{val4pf-aut}
Assume $\alpha$ is a group automorphism.
\end{case}
Since $\ZZ_n \times \{0\}$ is $\alpha$-invariant, this implies there is some $m \in \ZZ_n^\times$, such that $\alpha(x,0) = (mx,0)$ for all $x \in \ZZ_n$. Since $\alpha(0,1)$ is an element of order~$2$ (and $(\nh,0)$ is fixed by~$\alpha$), we must have 
	$\alpha(0,1) \in \{(0,1), (\nh,1)\}$.
If $\alpha(0,1) = (0,1)$, then $\alpha(x,i) = (mx,i)$, which contradicts the assumption that $\alpha \notin \Aut X \times S_2$.
Therefore, we have $\alpha(0,1) = (\nh,1)$. So 
	\[ \text{$\alpha(x,i) = \bigl( mx + i\nh,i \bigr)$ \ for all $(x,i) \in BX$} . \]
Since $S \times \{1\}$ is $\alpha$-invariant, this implies that $mS + \nh = S$, so $X$ is of Wilson type~\pref{Wilson-C4}.

\begin{subcase}
Assume that $\nh$ is odd. 
\end{subcase}
Since $X$ is connected, we may assume, without loss of generality, that $a \notin 2\ZZ_n$.
Then $ma + \nh \in 2 \ZZ_n$, so we must have $\alpha(a,1) \in \{(\pm b,1)\}$. Therefore $b = ma + \nh$ (perhaps after composing $\alpha$ with the group automorphism $x \mapsto -x$, which replaces $m$ with~$-m$).

Now, we have $\alpha(a,1) = (ma + \nh, 1) = (b,1)$, so $\alpha(\pm a, 1) = (\pm b, 1)$. Since $\alpha$ is a group automorphism that preserves the set $S \times \{1\}$, this implies $\alpha(\pm b, 1) = (\pm a, 1)$, so we may write $\alpha(b,1) = (\epsilon a, 1)$ with $\epsilon \in \{\pm 1\}$. Then we have 
	$m^2 (a,1) = \alpha^2(a,1) = \epsilon(a,1)$ 
and 
	$m^2(b,1) = \alpha^2(b,1) = \epsilon(b,1)$, 
so $m^2 x = \epsilon x$ for all $x \in \ZZ_n \times \ZZ_2$. This implies $m^2 \equiv \epsilon \equiv \pm 1 \pmod{n}$. So $X$ is as described in~\pref{val4-k2=1}. 

\begin{subcase}
Assume that $\nh$ is even.
\end{subcase}
Then $m a + a$ has the same parity as~$a$ (and $m b + b$ has the same parity as~$b$), so we must have $\alpha(a,1) \in \{(\pm a, 1)\}$ and $\alpha(b,1) \in \{(\pm b, 1)\}$. There is no harm in assuming $\alpha(a,1) = (a,1)$ (by replacing $m$ with $-m$ if necessary). Then, since $\alpha$ is not the identity map, we must have $\alpha(b,1) = (-b,1)$. Therefore
	\[ \text{$\bigl( ma + \nh, 1 \bigr) = \alpha(a,1) = (a,1)$, so $(m-1)a = \nh$,} \]
and
	\[ \text{$\bigl( mb + \nh, 1 \bigr) = \alpha(b,1) = (-b,1)$, so $(m+1)b = \nh$.} \]
Since $m - 1$ and $m + 1$ are even (and $\nh$ has order~$2$), this implies that $|a|$ and $|b|$ are divisible by~$4$.

This also implies that $2(m - 1)a = 0$ and $2(m + 1)b = 0$, so $|a|$ is a divisor of $2(m - 1)$ and $|b|$ is a divisor of $2(m + 1)$. Therefore $\gcd \bigl(|a|, |b| \bigr)$ is a divisor of $\gcd \bigl( 2(m -1), 2(m + 1) \bigr) \le 4$. By combining this with the conclusion of the preceding paragraph, we conclude that $\gcd \bigl(|a|, |b| \bigr) = 4$. Then, since $X$ is not bipartite, we must have $n \equiv 0 \pmod{8}$. This establishes that conclusion~\pref{val4-gcd} holds.

\begin{case} \label{val4-neq}
Assume $2s \neq 2t$, for all $s,t \in S$, such that $s \neq t$.
\end{case}
We may assume $\alpha$ is not a group automorphism, for otherwise \cref{val4pf-aut} applies. So $BX$ is not normal. 
Therefore, \cref{4cyclenormal} implies there exist $s,t,u,v\in S$ such that  $s + t = u + v \neq 0$ and $\{s,t\} \neq \{u,v\}$. From the assumption of this \lcnamecref{val4-neq},  we see that this implies $3a = \pm b$ (perhaps after interchanging $a$ with~$b$). This implies that $a$ and $b$ have the same parity, which contradicts the assumption that $X$ is connected and nonbipartite.

\begin{case}
The remaining case.
\end{case}
Since \cref{val4-neq} does not apply, we have $2s = 2t$, for some $s,t \in S$, such that $s \neq t$. 
We may assume $s = a$.

\begin{subcase}
Assume that $t = -s = -a$. 
\end{subcase}
Then $|a| = 4$. If $n$ is divisible by~$8$, then condition~\pref{val4-gcd} is satisfied. (If $|a| = 4$, and $n$ is divisible by~$8$, then $|b|$ must be divisible by~$8$, so $\gcd\bigl( |a|, |b| \bigr) = 4$.) So we may assume $n = 4k$, where $k$ is odd. Since $X$ is nonbipartite, we know that $|b|$ is not divisible by~$4$, so the fact that $|a| = 4$ implies $|\langle a \rangle \cap \langle b \rangle| \le 2$. Hence, there is an automorphism of~$\ZZ_n$ that fixes~$a$, but inverts~$b$, so 
	\[ |{\Aut X}| \ge 4n . \]
Also, since $k$ is odd and $X$ is nonbipartite, we must have $kb \neq \pm a$. Since $4kb = nb = 0$, this implies $k(b,1) \in \{(0,1), (2a, 1)\}$. Since $(2a, 1) \notin \langle (a,1) \rangle$, this implies that $\langle (a,1) \rangle \cap \langle (b, 1) \rangle = \{(0,0)\}$, so $BX \cong C_4 \cartprod C_{n/2}$. Therefore (using \cref{Aut(C4timesX)}) we have
	\begin{align*}
	|{\Aut BX}| 
	&= |{\Aut(C_4 \cartprod C_{n/2})}|
	= |{\Aut C_4 \times \Aut C_{n/2}}|
	\\&= |{\Aut C_4}| \cdot |{\Aut C_{n/2}}|
	= 8 \cdot n
	= 2 \cdot 4n
	\le 2 \cdot |{\Aut X}|
	. \end{align*}
This contradicts the assumption that $X$ is unstable.

\begin{subcase}
Assume that $t \neq -s$. 
\end{subcase}
Therefore, we may assume $s = a$ and $t = b$, so $2a = 2b$. This means that $a - b$ has order~$2$, and must therefore be equal to $\nh$, so $S + \nh = S$. This contradicts the fact that $\Cay(\ZZ_n, S)$ is twin-free.
\end{proof}

The following observation can be verified by inspecting the list \cite{BaikFengSimXu4} of connected, non-normal Cayley graphs of valency~$4$ on abelian groups, and confirming that none of them are the canonical double cover of a nontrivially unstable circulant graph. (Recall that ``normal'' is defined in \fullcref{CayleyDefn}{Normal}.) For the reader's convenience, we provide a proof that avoids reliance on the entire classification, by extracting the relevant part of the proof in~\cite{BaikFengSimXu4} (and by using \cref{val4} to reduce the number of cases).

\begin{cor} \label{Val4CoverIsNormal}
If $X$ is a nontrivially unstable circulant graph of valency~$4$, then $BX$ is normal. 
\end{cor}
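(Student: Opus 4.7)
The plan is to invoke \cref{val4} to reduce to two families and then, in each family, show that every $\sigma \in \Aut(BX)$ fixing $(0,0)$ is a group automorphism of $\ZZ_n \times \ZZ_2$, by applying \cref{4cyclenormal} when possible and by a direct cycle-counting and rigidity argument otherwise.

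By \cref{val4}, $X = \Cay(\ZZ_n,\{\pm a,\pm b\})$ lies in one of two families: (i) $n \equiv 2 \pmod 4$, $\gcd(a,n) = 1$, and $b = ma + n/2$ for some $m \in \ZZ_n^\times$ with $m^2 \equiv \pm 1 \pmod{n}$; or (ii) $8 \mid n$ and $\gcd(|a|,|b|) = 4$. In family~(i), a parity argument shows $a$ is odd (since $\gcd(a,n)=1$ and $n$ is even) and $b$ is even (since $m$ must be odd as a unit modulo an even integer, while $n/2$ is odd in family~(i)), so $2a,2b$ are even while $a \pm b$ are odd. The only potential failures of \cref{4cyclenormal}'s hypothesis on $\widetilde S$ are $2a = \pm 2b$, which forces $a \mp b = n/2$ and hence $m = \pm 1$ (so $0$ and $n/2$ are twins in $X$, contradicting twin-freeness), or $2a = -2a$ and $2b = -2b$, impossible because $|a| = n \ge 6$ and $|b| \notin \{2,4\}$ when $n \equiv 2 \pmod 4$. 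Hence \cref{4cyclenormal} applies and $BX$ is normal.

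For family~(ii), note first that $|a| = |b| = 4$ is ruled out by distinctness of the four elements of $S$, because the only elements of order~$4$ in $\ZZ_n$ are $\pm n/4$. So either both $|a|,|b| > 4$ or exactly one equals~$4$. In the former sub-case the previous argument adapts: $2a \ne -2a$, $2b \ne -2b$, and $2a = \pm 2b$ would again force $a \mp b = n/2$ and hence twins in $X$, so \cref{4cyclenormal} applies. In the remaining sub-case, say $|b| = 4$, \cref{4cyclenormal} genuinely fails because both $\{b,b\}$ and $\{-b,-b\}$ sum to $n/2 \ne 0$. Here I would argue by counting $4$-cycles in $BX$: each $\pm a$-edge lies in exactly two $4$-cycles (both of the mixed form $a,b,-a,-b$), while each $\pm b$-edge lies in three (two mixed, plus the pure $b$-$4$-cycle arising because $|b|=4$). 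So any $\sigma \in \Aut(BX)_{(0,0)}$ preserves the partition of $\widetilde S$ into $\{(\pm a,1)\}$ and $\{(\pm b,1)\}$, leaving at most four possibilities for $\sigma|_{\widetilde S}$, each realized by a group automorphism of $\ZZ_n \times \ZZ_2$ (built from the Wilson-type~\pref{Wilson-C4} automorphism constructed in \cref{val4}, composed with $\pm\mathrm{id}$). A rigidity argument --- propagating fixed vertices outward from $\widetilde S$ via common-neighbor counts, and using the twin-freeness of $X$ to rule out ambiguity at each stage --- then shows that $\sigma$ is determined by its action on $\widetilde S$ and is therefore a group automorphism.

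The main obstacle is this last sub-case: verifying that the $4$-cycle counts genuinely distinguish the $a$- and $b$-edge classes (ruling out stray extra $4$-cycles that could equalize the counts), and carrying out the rigidity step that pins $\sigma$ down on all of $V(BX)$ from its values on $\widetilde S$ alone without invoking the full classification of non-normal valency-$4$ Cayley graphs on abelian groups in \cite{BaikFengSimXu4}.
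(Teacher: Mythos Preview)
Your proposal is correct and follows essentially the same approach as the paper. The paper organizes the argument contrapositively---assuming $BX$ is not normal, it uses \cref{4cyclenormal} plus the parity/twin-freeness observations (exactly your checks) to force $|a|=4$, and then invokes the ball-of-radius-$2$ rigidity from \cite[Lem.~3.4]{BaikFengSimXu4}---whereas you split first via \cref{val4} and then verify \cref{4cyclenormal} directly in the easy cases; but the substantive content (parity to rule out $3a=\pm b$, twin-freeness to rule out $2a=\pm 2b$, and a local rigidity argument in the $|b|=4$ case) is the same. Your $4$-cycle count is just another way to read off the edge-class separation that the paper extracts from the structure of the radius-$2$ ball, and your acknowledged ``rigidity step'' is precisely what the paper also leaves as a sketch.
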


\begin{proof} 
Write $X = \Cay(\ZZ_n, \{\pm a, \pm b\})$, and suppose $BX$ is not normal. (This will lead to a contradiction.) By using \cref{4cyclenormal} (and the fact that $X$ is not bipartite), as in \cref{val4-neq} of the proof of \cref{val4}, we see that $2s = 2t$, for some $s,t \in S$, such that $s \neq t$. Therefore, we may assume that either $2a = 2b$ or $|a| = 4$.
However, we cannot have $2a = 2b$, since $X$ is twin-free. (If $b = a + \nh$, then $S = S + \nh$.)

So we have $|a| = 4$. Then, since $X$ is nontrivially unstable, we see from \cref{val4} that $n$ is divisible by~$8$ and $\langle b \rangle = \ZZ_n$. (We are now in the situation of \cite[Lem.~3.4]{BaikFengSimXu4}, but will briefly sketch the proof.) Let $\alpha \in \Aut BX$, such that $\alpha(0,0) = (0,0)$. The subgraph induced by the ball of radius~$2$ around $(0,0)$ has only two automorphisms, so the restriction of~$\alpha$ to this ball is the same as the restriction of a group automorphism~$\beta$ (such that $\beta(a) \in \{\pm a\}$ and $\beta(b) \in \{\pm b\}$). It is then easy to see that $\alpha(x) = \beta(x)$ for all~$x$, so $\alpha$ is a group automorphism. This means that $BX$ is normal.
\end{proof}

The following technical result will be used in \cref{val5Sect,val7Sect}.

\begin{cor} \label{Val4Subgraph}
Let $X = \Cay(\mathbb{Z}_n,S)$ be a circulant graph of even order and odd valency, and let $S_0 \subset S$ with $|S_0| = 4$. Let $X_0 \coloneqq \Cay(\ZZ_n, S_0)$ and let $X_0'$ be a connected component of $X_0$. Assume that the set of $S_0$-edges is invariant under $\Aut(BX)$. If either
\begin{enumerate}
    \item \label{Val4Subgraph-odd}
    $|V(X_0')|$ is odd, 
    or
    \item \label{Val4Subgraph-even}
    $X_0$ is twin-free and nonbipartite,
\end{enumerate}
then $X$ is stable.
\end{cor}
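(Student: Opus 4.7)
The plan is to deduce stability of~$X$ by analyzing the quadrivalent Cayley subgraph~$X_0'$ and then invoking either \cref{StableSubgraph} or \cref{NormalSubgraph}. First I would observe that since $|S_0|=4$, $S_0=-S_0$, and $0\notin S_0$, negation has no fixed point in~$S_0$ (the only candidate, $n/2$, would force $|S_0|$ to be odd), so $S_0=\{\pm a,\pm b\}$ and $X_0'$ has valency exactly~$4$. I may also assume $X$ is connected, since a disconnected graph is never stable.

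I would next verify, in both cases~\pref{Val4Subgraph-odd} and~\pref{Val4Subgraph-even}, that $X_0'$ is connected, nonbipartite, and twin-free. Under~\pref{Val4Subgraph-odd}, nonbipartiteness holds because a connected vertex-transitive bipartite graph has even order, and twin-freeness follows from \fullcref{nottwinfree}{d=4}: otherwise $X_0'$ would be $K_{4,4}$ or $C_\ell\wr\overline{K_2}$, both of which have even order. Under~\pref{Val4Subgraph-even}, both properties descend from $X_0$ to~$X_0'$, since all components of~$X_0$ are isomorphic and two twins of~$X_0$ must share their neighborhood inside a single component. Because $X_0'$ is nonbipartite, $BX_0'$ is connected, and the hypothesis that $\Aut BX$ preserves the set of $S_0$-edges ensures that any automorphism of~$BX$ fixing~$(0,0)$ restricts to an automorphism of~$BX_0'$ fixing~$(0,0)$.

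Now I split on the parity of~$|V(X_0')|$. If $|V(X_0')|$ is odd (which covers all of case~\pref{Val4Subgraph-odd}), then \cref{OddCirculant} rules out nontrivial instability of~$X_0'$, and the structural properties above rule out trivial instability, so $X_0'$ is stable and \cref{StableSubgraph} immediately gives that $X$ is stable (using also that $X$ is nonbipartite, because $X_0'$ contains an odd cycle). If $|V(X_0')|$ is even and $X_0'$ happens to be stable, the same argument concludes. The remaining situation is that $|V(X_0')|$ is even and $X_0'$ is nontrivially unstable of valency~$4$; in that case \cref{Val4CoverIsNormal} tells us that $BX_0'$ is normal.

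To dispose of this last subcase I would appeal to \cref{NormalSubgraph}. The only nonobvious hypothesis to check is $n/2 \in \langle S_0\rangle$. Writing $\langle S_0\rangle = d\ZZ_n$ with $d=\gcd(S_0\cup\{n\})$, the equality $n/d = |V(X_0')|$ together with evenness of $|V(X_0')|$ forces $d\mid n/2$, so $n/2\in d\ZZ_n=\langle S_0\rangle$; \cref{NormalSubgraph} then yields that $X$ is stable. The main obstacle is precisely this last subcase, where $X_0'$ is itself one of the nontrivially unstable graphs classified in \cref{val4}: the clean link between the even order of $X_0'$ and membership of $n/2$ in $\langle S_0 \rangle$ is the key observation that makes \cref{NormalSubgraph} applicable.
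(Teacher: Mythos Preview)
Your proof is correct and follows essentially the same route as the paper: split on the parity of $|V(X_0')|$, invoke \cref{OddCirculant} together with \cref{StableSubgraph} in the odd case, and \cref{Val4CoverIsNormal} together with \cref{NormalSubgraph} in the even nontrivially-unstable case, with the same check that $n/2\in\langle S_0\rangle$ when $|V(X_0')|$ is even. One wording slip: ``a disconnected graph is never stable'' does not let you \emph{assume} $X$ is connected---that would make the conclusion false rather than harmless---but the paper likewise applies \cref{StableSubgraph} without explicitly verifying that $X$ is connected and nonbipartite, so this is really a missing hypothesis in the corollary's statement rather than a flaw in your argument.
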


\begin{proof}
Let us first assume that $|\langle S_0 \rangle| = |V(X_0')|$ is odd. Note that as $X_0'$ is $4$-valent, it is twin-free. (Otherwise, \cref{nottwinfree} tells us that $X_0' \cong Y \wr \overline{K_m}$, where $m \geq 2$ and $Y$ is $\delta$-regular. Then $|V(X_0')| = m \, |V(Y)|$, so $m$ is odd. But also $4 = \delta m$, so $m$ is even, a contradiction.) Therefore $X_0'$ is a connected, twin-free, circulant graph of odd order, so, by \cref{OddCirculant}, it must be stable. It follows by \cref{StableSubgraph} that $X$ is stable.

Let us now suppose that $|\langle S_0 \rangle| = |V(X_0')|$ is even. It then follows that $\nh\in \langle S_0 \rangle$. By assumption~\pref{Val4Subgraph-even}, $X_0$ must be twin-free and nonbipartite. As all of its connected components are isomorphic to $X_0'$, it follows that $X_0'$ is twin-free and nonbipartite. In particular, $X_0'$ is not trivially unstable. If it is stable, we conclude that $X$ is stable by \cref{StableSubgraph}. If it is not stable, it is nontrivially unstable so by \cref{Val4CoverIsNormal} it follows that $BX_0'$ is normal. Applying \cref{NormalSubgraph}, we conclude that $X$ is stable.
\end{proof}

\section{Unstable circulants of valency \texorpdfstring{$5$}{5}} \label{val5Sect}

\begin{thm} \label{val5}
A circulant graph\/ $\Cay(\ZZ_n, S)$ of valency~$5$ is unstable if and only if either it is trivially unstable, or it is one of the following:
\noprelistbreak
	\begin{enumerate}
	\item \label{val5-Z12t}
	$\Cay(\ZZ_{12k},\{ \pm s, \pm 2k, 6k \})$ with $s$~odd, 
	which has Wilson type~\pref{Wilson-C1},
	\item \label{val5-Z8}
	$\Cay(\ZZ_8,\{ \pm 1, \pm 3, 4 \})$, 
	which has Wilson type~\pref{Wilson-C3}.
	\end{enumerate}
\end{thm}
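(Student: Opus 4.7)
We begin with the $(\Leftarrow)$ direction, which is a direct computation: for family~(1) we take $h = 4k$ and check $h + \{\pm 2k, 6k\} = \{\pm 2k, 6k\} = S_e$, giving Wilson type~\pref{Wilson-C1}; for family~(2) we take $H = 2\ZZ_8$ and verify that $R = \{4\}$ and $d = 4$ satisfy the hypotheses of~\pref{Wilson-C3}.

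For $(\Rightarrow)$, suppose $X = \Cay(\ZZ_n, S)$ is nontrivially unstable of valency~$5$. By \cref{OddCirculant}, $n$ is even, and since the valency is odd, $\nh \in S$, so we write $S = \{\pm a, \pm b, \nh\}$, $S_0 = \{\pm a, \pm b\}$, and $X_0 = \Cay(\ZZ_n, S_0)$. The proof splits on whether every automorphism of $BX$ maps $\nh$-edges to $\nh$-edges.

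In Case~A ($\nh$-edges preserved), \cref{Val4Subgraph} will force $X_0$ either to have a twin or to be bipartite (odd-order components being ruled out). If $X_0$ has a twin, we invoke \fullcref{nottwinfree}{d=4} to write each component as $K_{4,4}$ or $C_\ell \wr \overline{K_2}$; connectedness of $X$ then pins the first possibility down to $X_0 = \Cay(\ZZ_8, \{\pm 1, \pm 3\})$, producing family~(2), while the second possibility yields $X \cong C_\ell \wr K_2$, which is stable by~\cref{CycWr}, a contradiction. If $X_0$ is bipartite, then $a, b$ are odd and $4 \mid n$; \cref{ReduceNonNormal} tells us $X_0$ is not a normal Cayley graph, and \cref{4cyclenormal} supplies $s, t, u, v \in S_0$ with $s + t = u + v \ne 0$ and $\{s, t\} \ne \{u, v\}$. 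Reducing this relation yields (up to interchanging $a$ and~$b$) one of $a \pm b = \nh$ or $b = \pm 3a$, and we examine each branch, matching family~(2) at $n = 8$ or else proving stability with the help of \cref{small}, \cref{Aut(C4timesX)}, \cref{StableSubgraph}, and~\cref{NormalSubgraph}.

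In Case~B (some $\nh$-edge is mapped to a $c$-edge with $c \in \{\pm a, \pm b\}$), \cref{order2orbit} applied with the order-$2$ element $\nh$ forces $S$ to contain every generator of $\langle c \rangle$. Since $\nh$ is not itself a generator (its order is~$2$ and $|c| > 2$), the generators lie in $\{\pm a, \pm b\}$, so either $\phi(|c|) = 2$ (hence $|c| \in \{3, 4, 6\}$) or $\phi(|c|) = 4$ (hence $|c| \in \{5, 8, 10, 12\}$, $\{\pm a, \pm b\}$ is the entire set of generators of $\langle c \rangle$, and $\langle a \rangle = \langle b \rangle$). Combining with connectedness and non-bipartiteness, the only surviving branches are $|c| = 6$, producing family~(1) with $c = \pm 2k$, and $|c| = 8$ with $n = 8$, recovering family~(2); every other parameter choice collapses to a graph on~\cref{small} or contradicts a standing hypothesis. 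The hardest part of the argument will be the bipartite subcase of Case~A and the $\phi(|c|) = 4$ branch of Case~B: each breeds several one-parameter families and a handful of sporadic small-$n$ candidates (such as the graph $\Cay(\ZZ_{12}, \{\pm 1, \pm 3, 6\}) \cong K_4 \cartprod K_3$) whose stability has to be established individually.
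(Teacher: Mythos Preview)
Your outline follows the paper's proof almost exactly: the paper also splits on whether $\nh$-edges are invariant (its \cref{val5nhInvt,val5nhNotInvt}), invokes \cref{Val4Subgraph} and \cref{ReduceNonNormal} in the first case, and \cref{order2orbit} in the second. Where you re-derive the bipartite subcase by hand, the paper simply packages the same argument as \cref{BipNonnormalVal4}.

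There are, however, a few genuine slips in your sketch. In the bipartite subcase of Case~A, your list of relations coming out of \cref{4cyclenormal} omits the possibility $|a|=4$ (from $2a=-2a$ with $\{a\}\neq\{-a\}$); the paper disposes of this in \cref{BipNonnormalVal4} by showing that $\Cay(\ZZ_n,\{\pm1,\pm n/4\})$ is normal, contradicting \cref{ReduceNonNormal}. In Case~B you say family~(2) survives at $|c|=8$, $n=8$, but \fullcref{val5Z8}{invt} shows that for $\Cay(\ZZ_8,\{\pm1,\pm3,4\})$ every automorphism of $BX$ preserves the $\nh$-edges, so this graph cannot occur in Case~B at all; the paper explicitly excludes it there for exactly this reason (the graph is produced only in Case~A). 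In your twin analysis, $C_\ell\wr K_2$ is \emph{not} stable when $\ell=4$ (see \cref{CycWr}), so the ``second possibility'' is not a contradiction there; rather $C_4\wr\overline{K_2}\cong K_{4,4}$ and one again lands in family~(2), as the paper notes. Finally, the parenthetical claim $\Cay(\ZZ_{12},\{\pm1,\pm3,6\})\cong K_4\cartprod K_3$ is false (a triangle count distinguishes them; the correct circulant model of $K_4\cartprod K_3$ is $\Cay(\ZZ_{12},\{\pm3,\pm4,6\})$), and in any event the paper handles the $|c|=4$ branch uniformly by applying \cref{StableSubgraph} to the $K_4$-components of $\Cay(\ZZ_n,\{\pm c,\nh\})$ rather than by identifying sporadic graphs.
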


\begin{rem} \label{val5Nontrivial}
It is easy to see that each connection set listed in \cref{val5} contains both even elements and odd elements, so none of the graphs are bipartite. Then it follows from \fullcref{nottwinfree}{prime} that the graphs are also twin-free. Therefore, a graph in the list is nontrivially unstable if and only if it is connected. And this is easy to check: 
	\begin{itemize}
	\item the graph in~\pref{val5-Z8} is connected (since~$1$ is in the connection set);
	\item a graph in \pref{val5-Z12t} is connected if and only if $s$ is relatively prime to~$k$.
	\end{itemize}
\end{rem}

The proof of \cref{val5} will use the following \lcnamecref{val5nhInvt}s.

The first \lcnamecref{BipNonnormalVal4} can be obtained by inspecting the list \cite[Cor.~1.3]{BaikFengSimXu4} of connected, non-normal, circulant graphs of valency~$4$:
	$K_5$, $C_m \wr \overline{K_2}$ (with $m\geq 3$) and $K_2 \wr \overline{K_5}-5K_2$. 
For the reader's convenience, we reproduce the relevant parts of the proof in~\cite{BaikFengSimXu4}.

\begin{lem}[cf.\ {\cite[Cor.~1.3]{BaikFengSimXu4}}] \label{BipNonnormalVal4}
If $X_0 = \Cay(\ZZ_n, \{\pm a, \pm b\})$ is a connected, bipartite, twin-free, non-normal, circulant graph of valency~$4$, then $X_0 = \Cay(\ZZ_{10}, \{\pm1, \pm 3\})$. 
\end{lem}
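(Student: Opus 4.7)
The plan is to either invoke or reprove the relevant case of the classification of connected non-normal $4$-valent Cayley graphs on abelian groups from \cite[Cor.~1.3]{BaikFengSimXu4}, which lists three families: $K_5$, $C_m\wr\overline{K_2}$ for $m\geq 3$, and $K_2\wr\overline{K_5}-5K_2$. I would immediately eliminate $K_5$ as non-bipartite, and $C_m\wr\overline{K_2}$ as the two vertices in each $\overline{K_2}$-fiber are twins. Thus only $K_2\wr\overline{K_5}-5K_2$, i.e., $K_{5,5}$ minus a perfect matching, survives, and I would identify this with $\Cay(\ZZ_{10},\{\pm 1,\pm 3\})$ via the even/odd bipartition of $\ZZ_{10}$, noting that $5$ is the unique odd element missing from $\{\pm 1,\pm 3\}\subset\ZZ_{10}$.

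To give a self-contained proof (reproducing the relevant part of \cite{BaikFengSimXu4}), I would first apply the contrapositive of \cref{4cyclenormal}: non-normality yields $s,t,u,v\in S=\{\pm a,\pm b\}$ with $s+t=u+v\neq 0$ and $\{s,t\}\neq\{u,v\}$. Enumerating multiset coincidences in the four-element $S$, the nontrivial possibilities fall into three types: (a) $2a=\pm 2b$, which forces $a\mp b=n/2$ and hence $S+n/2=S$, contradicting twin-freeness; (b) $2a=-2a$, i.e., $|a|=4$ (or the symmetric statement for $b$); (c) $2a=\pm(a\pm b)$, i.e., $b=\pm 3a$ or $a=\pm 3b$. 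In case (c), WLOG $b=-3a$; since connectedness forces $\gcd(a,n)=1$, rescaling by $x\mapsto a^{-1}x$ reduces to $X_0\cong\Cay(\ZZ_n,\{\pm 1,\pm 3\})$ with $n$ even (bipartiteness) and $n\geq 10$ (valency $4$ plus twin-freeness). Case (b) with $|a|=4$ forces $a=n/4$ odd, so $n\equiv 4\pmod 8$; further case-splits show this either reduces to case (c) up to a unit rescaling (as happens, e.g., for $\Cay(\ZZ_{12},\{\pm 3,\pm 5\})\cong \Cay(\ZZ_{12},\{\pm 1,\pm 3\})$) or yields a graph that one verifies is normal directly.

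The remaining step is to show that within the family $\Cay(\ZZ_n,\{\pm 1,\pm 3\})$, $n$ even, non-normality holds only for $n=10$. For $n=10$ the graph is $K_{5,5}$ minus a perfect matching, with $|\Aut|=240$, far larger than the $40$ expected in the normal case. For $n\geq 12$, I would compute the multiset of common-neighbor counts of the six pairs in $N(0)=\{\pm 1,\pm 3\}$: direct calculation yields $3$ for each of $\{1,3\}$, $\{-1,-3\}$, $\{1,-1\}$ (e.g., $|N(1)\cap N(3)|=|\{0,2,4\}|=3$), $2$ for $\{1,-3\}$ and $\{-1,3\}$, and $1$ for $\{3,-3\}$. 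Any $\alpha\in\Aut X_0$ fixing $0$ permutes $N(0)$ preserving these counts, and a brief case-check shows the only compatible permutations are the group automorphisms $\pm 1\in\ZZ_n^\times$. A standard induction on distance from $0$ then shows $\alpha$ is a group automorphism everywhere, establishing normality. The main obstacle is the bookkeeping in case (b), where one must carefully verify that $|a|=4$ without $b=\pm 3a$ does not yield a new non-normal example; this requires several sub-cases and is the most tedious part of the argument.
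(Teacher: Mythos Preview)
Your approach is essentially the same as the paper's: apply \cref{4cyclenormal} to obtain the three cases (twins, $|a|=4$, or $b=3a$), dispose of the first by twin-freeness, and show the family $\Cay(\ZZ_n,\{\pm1,\pm3\})$ is normal for $n\ge 12$ by a local analysis near~$0$ (your common-neighbor count is exactly the content of the paper's ``only nontrivial automorphism of the ball of radius~$2$ is $x\mapsto -x$'').

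The one place where the paper is cleaner is your case~(b). Rather than splitting into sub-cases that sometimes reduce to case~(c) and sometimes require ad~hoc normality checks, the paper argues uniformly: bipartiteness forces $|b|$ even, so $n/2\in\langle b\rangle$; writing $2a=n/2=\ell b$, bipartiteness forces $\ell$ even, hence $4\mid |b|$, hence $a\in\langle b\rangle$, hence $\langle b\rangle=\ZZ_n$ and $X_0\cong\Cay(\ZZ_n,\{\pm1,\pm n/4\})$. For this family the ball of radius~$2$ has only $\pm1$ with a pendant edge, so every automorphism preserves $1$-edges and is therefore an automorphism of the cycle $\Cay(\ZZ_n,\{\pm1\})$, giving normality. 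This avoids the ``tedious bookkeeping'' you flag; your example $\Cay(\ZZ_{12},\{\pm3,\pm5\})$ does reduce to case~(c), but e.g.\ $\Cay(\ZZ_{20},\{\pm1,\pm5\})$ does not, so the direct normality argument is genuinely needed.
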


\begin{proof}[Proof \normalfont(cf.\ {\cite[proof of Thm.~1.2]{BaikFengSimXu4}})]
From \cref{4cyclenormal}, we see that (perhaps after permuting the generators), we have either $2a = 2b$ or $b = 3a$ or $4a = 0$.  However, we cannot have $2a = 2b$, since $X$ is twin-free. So there are two cases to consider.

\refstepcounter{caseholder} 

\begin{case}
Assume $b = 3a$. 
\end{case}
Since $|b| \neq 2$, we have $n > 6$. 

For $n = 8$, we have $2b = 6a = -2a$, which contradicts the assumption that $X$ is twin-free. 

For $n = 10$, we have the Cayley graph that is specified in the statement of the \lcnamecref{BipNonnormalVal4}. 

For $n \ge 12$, we have $X \cong \Cay(\ZZ_n, \{\pm 1, \pm 3\})$. This is the situation of \cite[Lem.~3.5]{BaikFengSimXu4}, but, for completeness, we sketch the proof. 
The only nontrivial automorphism of the ball of radius~$2$ centered at~$0$ is $x \mapsto -x$. From this, it easily follows that $x \mapsto -x$ is the only nontrivial automorphism of~$X$, so $X$ is normal. This contradicts our hypothesis.

\begin{case}
Assume $4a = 0$.
\end{case}
This means $|a| = 4$. Since $X$ is bipartite, we know that $|b|$ is even, so $\langle b \rangle$ contains the unique element of order~$2$; this means $2a = \ell b$ for some $\ell \in \ZZ$. Since $X$ is bipartite, we know that $\ell$ is even. So $|b|$ is divisible by~$4$. Therefore, $\langle b \rangle$ contains~$a$. So $X \cong \Cay(n, \{\pm 1, \pm n/4\})$. If we consider the subgraph induced by the ball of radius $2$, we note that only the vertices $\pm 1$ have a pendant edge. In particular, every automorphism of $X$ maps $1$-edges to $1$-edges and is therefore an automorphism of the graph $X_0\coloneqq\Cay(\ZZ_n,\{\pm 1\}) \cong C_n$. Since this Cayley graph is normal, we can conclude the same about $X$, which is a contradiction.
\end{proof}

\begin{lem} \label{val5nhInvt}
Let $X = \Cay(\ZZ_n, S)$ be a nontrivially unstable, circulant graph of valency~$5$. 
If every automorphism of $BX$ maps $n/2$-edges to $n/2$-edges, then $X = \Cay(\ZZ_8,\{ \pm 1, \pm 3, 4 \})$.
\end{lem}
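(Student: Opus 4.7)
The plan is to apply \cref{Val4Subgraph} with $S_0 \coloneqq S \setminus \{n/2\} = \{\pm a, \pm b\}$ (noting that $n$ is even by \cref{OddCirculant}, so we may write $S = \{\pm a,\pm b, n/2\}$) and then analyze the two configurations that \cref{Val4Subgraph} cannot exclude. Let $X_0 \coloneqq \Cay(\ZZ_n, S_0)$ and let $X_0'$ be a connected component. By hypothesis, every automorphism of $BX$ preserves $n/2$-edges, hence preserves $S_0$-edges; since $X$ is nontrivially unstable, \cref{Val4Subgraph} forces $|V(X_0')|$ to be even and $X_0$ to be either not twin-free (Case A) or bipartite (Case B).

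In Case A, \cref{nottwinfree} implies each component of $X_0$ is isomorphic to $K_{4,4}$ or $C_\ell \wr \overline{K_2}$ with $\ell = |V(X_0')|/2$. If the components are $K_{4,4}$, then $|\langle a, b\rangle| = 8$, and $X$ connected forces $n \in \{8, 16\}$; the case $n = 16$ is excluded because then $a$, $b$, and $n/2 = 8$ would all be even, contradicting connectedness. For $n = 8$, the bipartition of $K_{4,4}$ is necessarily $2\ZZ_8$ versus $1 + 2\ZZ_8$, giving $\{\pm a, \pm b\} = \{\pm 1, \pm 3\}$ and hence $X = \Cay(\ZZ_8, \{\pm 1, \pm 3, 4\})$, as desired. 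If instead the components are $C_\ell \wr \overline{K_2}$, then by the ``moreover'' clause of \cref{nottwinfree} the unique twin of $0$ in $X_0'$ is the element of order $2$ in $\langle a, b\rangle$; since $|V(X_0')|$ is even, that element equals $n/2$, which forces $n/2 \in \langle a, b\rangle$, hence (using $X$ connected) $\langle a, b\rangle = \ZZ_n$, so $X_0$ is itself connected with $\ell = n/2$. The translation invariance $n/2 + S_0 = S_0$ then yields $b = \pm a + n/2$, and a direct check shows that $X \cong C_\ell \wr K_2$: the cosets $\{x, x + n/2\}$ serve as the fibers of the wreath product, the $n/2$-edges fill in exactly the twin-matching, and the quotient $\Cay(\ZZ_n/\langle n/2 \rangle, \{\pm a\})$ is the cycle $C_\ell$ (since $\gcd(a, n/2) = 1$). \Cref{CycWr} then forces $\ell = 4$, so $n = 8$; but then $X_0 = C_4 \wr \overline{K_2} \cong K_{4,4}$, reducing to the previous subcase and again giving $X = \Cay(\ZZ_8, \{\pm 1, \pm 3, 4\})$.

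In Case B, bipartiteness of $X_0$ forces $a$ and $b$ both odd, so $d \coloneqq \gcd(a, b, n)$ is an odd divisor of $n$, hence also a divisor of $n/2$. Therefore $\langle a, b, n/2\rangle = \langle d, n/2\rangle = d\ZZ_n$, and $X$ connected gives $d = 1$, so $X_0$ is connected. The ``moreover'' clause of \cref{ReduceNonNormal} then says $X_0$ is not a normal Cayley graph, so \cref{BipNonnormalVal4} identifies $X_0 \cong \Cay(\ZZ_{10}, \{\pm 1, \pm 3\})$. But then $n = 10$ and $S = \{\pm 1, \pm 3, 5\}$ consists entirely of odd elements, making $X$ bipartite and contradicting the assumption that $X$ is nontrivially unstable. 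So Case B does not occur, and we conclude $X = \Cay(\ZZ_8, \{\pm 1, \pm 3, 4\})$.

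The main obstacle is the $C_\ell \wr \overline{K_2}$ subcase of Case A: one must exploit the $n/2$-translation invariance of $S_0$ to identify $X$ explicitly as $C_\ell \wr K_2$ and then invoke \cref{CycWr} to pin down $\ell = 4$, thereby folding back into the $K_{4,4}$ subcase. All other steps are reasonably direct applications of the prior lemmas \cref{Val4Subgraph,nottwinfree,ReduceNonNormal,BipNonnormalVal4,CycWr}.
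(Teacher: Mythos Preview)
Your proof is essentially correct and follows the same overall strategy as the paper: strip off the $n/2$-edges, analyze $X_0 = \Cay(\ZZ_n,\{\pm a,\pm b\})$, and use the valency-$4$ machinery (\cref{nottwinfree}, \cref{BipNonnormalVal4}, \cref{ReduceNonNormal}, \cref{CycWr}). Your route is slightly more streamlined than the paper's, because you invoke \cref{Val4Subgraph} up front as a black box to reduce immediately to ``$|V(X_0')|$ even and ($X_0$ has twins or is bipartite)'', whereas the paper separately treats the cases ``$X_0$ nontrivially unstable'' (via \cref{Val4CoverIsNormal} and \cref{OddValencyNotNormal}) and ``$X_0$ disconnected'' before reaching the twin/bipartite dichotomy.

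There is, however, a small logical gap at the start of your Case~B. You write ``bipartiteness of $X_0$ forces $a$ and $b$ both odd'', and then use this to deduce that $X_0$ is connected. But for a \emph{disconnected} Cayley graph on $\ZZ_n$, bipartiteness does not force the connection set to lie in $1+2\ZZ_n$; the bipartition of a component is with respect to an index-$2$ subgroup of $\langle a,b\rangle$, not of $\ZZ_n$. The fix is to reverse the order: since $\langle a,b,n/2\rangle=\ZZ_n$ and $n/2$ has order~$2$, the index $[\ZZ_n:\langle a,b\rangle]$ is $1$ or~$2$. If it were~$2$ then $\langle a,b\rangle=2\ZZ_n$, and $|V(X_0')|=n/2$ even would force $4\mid n$, hence $n/2\in 2\ZZ_n=\langle a,b\rangle$, contradicting connectedness of~$X$. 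So $X_0$ is connected, and \emph{then} connected $+$ bipartite gives $a,b$ odd. (You should also note explicitly that in Case~B you may assume $X_0$ is twin-free --- otherwise Case~A applies --- so that the hypotheses of \cref{BipNonnormalVal4} are met.) With these adjustments your argument goes through.
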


\begin{proof}
Recall that \cref{nh} introduced $\nh$ as an abbreviation for $n/2$. Since $X$ has odd valency, we know that $n$ is even; indeed, we may write 
	\[ X = \Cay(\ZZ_n, \{\pm a, \pm b, \nh\}) .\]
Since every automorphism of $BX$ maps $\nh$-edges to $\nh$-edges, we know that every automorphism of $BX$ is an automorphism of $BX_0$, where 
	\[ X_0 = \Cay(\ZZ_n, \{\pm a, \pm b\}) .\] 
If $X_0$ is stable, then by \cref{StableSubgraph} it follows that $X$ is stable, a contradiction. So we may assume now that $X_0$ is unstable.

\refstepcounter{caseholder} 

\begin{case}
Assume $X_0$ is nontrivially unstable.
\end{case}
As $X_0$ is $4$-valent, by applying \cref{Val4CoverIsNormal} we conclude that $BX_0$ is a normal Cayley graph. Because every automorphism of $BX$ is an automorphism of $BX_0$, it then follows that $BX$ is normal as well. However, since $X$ is nontrivially unstable and of valency $5$, \cref{OddValencyNotNormal} implies that $BX$ is not normal, a contradiction.

\begin{case}
Assume $X_0$ is trivially unstable.
\end{case}
There are three possibilities to consider:

\begin{subcase}
Assume $X_0$ is not connected.
\end{subcase}
Then $a$ and $b$ generate a proper subgroup of $\mathbb{Z}_n$, while $a,b$ and $\nh$ generate the whole group. From here, $n = 2k$, where $k$ is odd, and $\langle a,b \rangle = 2 \ZZ_n$ has order~$k$. The connected components of $X_0$ then have order~$k$, and therefore have odd order. By applying \fullcref{Val4Subgraph}{odd} we conclude that $X$ is stable, a contradiction.

\begin{subcase}
Assume $X_0$ is connected, but is not twin-free.
\end{subcase}
Then  (by \fullcref{nottwinfree}{Y}) we can represent $X_0$ as a wreath product $Y \wr \overline{K_m}$, where $Y$ is a $\delta$-regular connected graph and $m > 1$ is an integer such that $\delta m = 4$. 

\begin{subsubcase}
Assume $m = 4$.
\end{subsubcase}
Then $\delta = 1$, so we get that $X_0 = K_2 \wr \overline{K_4} = K_{4,4}$. Hence, $X_0$ is a connected, $4$-valent Cayley graph on~$\ZZ_8$ and its connection set can only contain odd numbers, because it is also bipartite. This uniquely determines $X_0$ as $\Cay(\ZZ_8,\{\pm1, \pm3\})$. From here, $X = \Cay(\ZZ_8,\{\pm1,\pm3, 4\})$, so $X$ is the graph in the statement of the \lcnamecref{val5nhInvt}. 

\begin{subsubcase}
Assume $m = 2$.
\end{subsubcase}
Then $\delta = 2$ and 
	\[ |V(Y)| = |V(X)|/m = 2k/2 = k , \]
so $Y$ is a $k$-cycle, so $X_0 \cong C_k \wr \overline{K_2}$. Consequently, $X \cong C_k \wr K_2$. From \cref{CycWr}, and the fact that $X$ is not stable, we conclude that $k=4$,  i.e., $X \cong C_4 \wr K_2$. It is easy to see that this implies $X = \Cay(\ZZ_8, \{\pm1, \pm3,4\})$. So $X$ is again the graph in the statement of the \lcnamecref{val5nhInvt}.

\begin{subcase}
Assume $X_0$ is bipartite, connected and twin-free. 
\end{subcase}
By applying \cref{ReduceNonNormal}, we conclude that the Cayley graph~$X_0$ is not normal. Then \cref{BipNonnormalVal4} tells us that $X_0 = \Cay(\ZZ_{10},\{\pm1,\pm3\})$, meaning that $X = \Cay(\ZZ_{10},\{\pm1,\pm3,5\})$. But then $X$ is bipartite, a contradiction. 
\end{proof}

The following simple observation provides a converse to \cref{val5nhInvt}.

\begin{lem} \label{val5Z8}
Let $X = \Cay(\ZZ_8,\{ \pm 1, \pm 3, 4 \})$. Then:
	\begin{enumerate}
	\item \label{val5Z8-WilsonType}
	$X$ is nontrivially unstable, and has Wilson type~\pref{Wilson-C3}, 
	\item \label{val5Z8-invt}
	every automorphism of $BX$ maps $n/2$-edges to $n/2$-edges,
	and
	\item \label{val5Z8-orbits}
	all other edges of $BX$ are in a single orbit of $\Aut BX$.
	\end{enumerate}
\end{lem}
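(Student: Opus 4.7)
The plan is to address the three assertions in order, relying on direct computations that are feasible because $X$ has only eight vertices.

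For the first assertion (Wilson type~\pref{Wilson-C3} and nontrivial instability), I will exhibit the required data. Setting $H = 2\ZZ_8$, I compute: for each odd $s \in S$ the set $s + H = \{\pm 1, \pm 3\}$ is contained in $S$, while for $s = 4$ we have $s + H = H = \{0, 2, 4, 6\} \not\subseteq S$. Therefore $R = \{4\}$ and $d = \gcd(4, 8) = 4$. Since $n/d = 2$ is even, $r/d = 1$ is odd for the unique $r \in R$, and $H \not\subseteq 4\ZZ_8 = d\ZZ_n$, condition~\pref{Wilson-C3} is satisfied. For nontriviality, $X$ is connected (since $1 \in S$ generates $\ZZ_8$), nonbipartite (the vertices $0, 1, 5$ form a triangle, with pairwise differences $1, 4$ and $-5 \equiv 3$, all lying in $S$), and twin-free (a short case check shows that the stabilizer $\{\,g \in \ZZ_8 : g + S = S \,\}$ is trivial).

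For the second assertion, the plan is to distinguish $n/2$-edges from the other edges by an automorphism invariant. The cleanest choice is the number of cycles of length $4$ through a given edge. A direct enumeration in $BX$ (a 5-regular graph on 16 vertices) shows that each $n/2$-edge lies in exactly $4$ such cycles, while each non-$n/2$-edge lies in exactly $10$. Since this count is preserved by every automorphism and the two values differ, the set of $n/2$-edges is $\Aut BX$-invariant.

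For the third assertion, the plan is to exhibit a subgroup of $\Aut BX$ that already acts transitively on the non-$n/2$-edges. The affine group $\ZZ_8 \rtimes \ZZ_8^\times$ is contained in $\Aut X$ (one checks that $aS = S$ for every $a \in \ZZ_8^\times$), and hence embeds into $\Aut X \times S_2 \le \Aut BX$; concretely, it acts on $BX$ via $(v, i) \mapsto (av + b, i)$. A non-$n/2$-edge joins $(v, 0)$ and $(v + s, 1)$ for some $s \in \{\pm 1, \pm 3\} = \ZZ_8^\times$, and the affine action sends it to the edge joining $(av + b, 0)$ and $(av + b + as, 1)$, whose connection element is $as$. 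Since $\ZZ_8^\times$ acts regularly on itself by multiplication and $\ZZ_8$ acts regularly by translation, we can realize any target first vertex $v' \in \ZZ_8$ and any target connection element $s' \in \{\pm 1, \pm 3\}$. Hence all non-$n/2$-edges lie in a single $\Aut BX$-orbit.

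I do not anticipate any serious obstacle. The only mildly tedious step is the 4-cycle enumeration in the second assertion, but it reduces to computing intersections of $5$-element neighborhoods in a small graph and so can be carried out in a few lines.
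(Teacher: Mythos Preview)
Your proposal is correct, but your route differs from the paper's. For parts~(2) and~(3), the paper proceeds structurally: it observes that $X_0 = \Cay(\ZZ_8,\{\pm1,\pm3\}) \cong K_{4,4}$ is bipartite, so $BX_0$ is two disjoint copies of $K_{4,4}$, and then that the $(\nh,1)$-edges join these two copies via an isomorphism, yielding $BX \cong K_{4,4} \cartprod K_2$. From \cref{Aut(C4timesX)} one then reads off $\Aut BX = S_2 \times \Aut K_{4,4}$, which makes both (2) (the $K_2$-factor edges are preserved) and (3) ($K_{4,4}$ is edge-transitive) immediate. Your approach is more elementary and self-contained: the $4$-cycle count ($4$ versus $10$, which checks out in $K_{4,4}\cartprod K_2$) gives (2) without invoking the product decomposition, and your affine-group argument for (3) is a nice direct construction that already lives inside $\Aut X$. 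The paper's route is shorter and more conceptual once the product structure is spotted; yours avoids any appeal to the Cartesian-product automorphism theorem and would be easier to verify by hand or machine.
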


\begin{proof}
\pref{val5Z8-WilsonType}
$X$ has Wilson type \pref{Wilson-C3} with parameters $H = \langle  2\rangle = \{0,2,4,6\}$, $R=\{4\}$, and $d=4$. (Then $n/d = 2$ is even, $r/d = 1$ for the unique element~$r$ of~$R$, and $H = 2 \ZZ_8 \not\subset 4 \ZZ_8 = d\ZZ_8$.) See \cref{val5Nontrivial} for an explanation that $X$ is therefore nontrivially unstable.

(\ref{val5Z8-invt}~and~\ref{val5Z8-orbits})
Let 
	\[ X_0 = \Cay(\ZZ_8,\{ \pm 1, \pm 3\}) \cong K_{4,4} . \]
Since $X_0$ is bipartite, we know that $BX_0$ is isomorphic to the disjoint union of two copies of~$X_0$. But $BX$ is connected, and the element $(\nh, 1)$ of order~$2$ is the only element of its connection set that is not in the connection set of~$BX_0$. It follows that
	\[ BX \ \cong \ X_0 \cartprod K_2 \ \cong \ K_{4,4} \cartprod K_2 . \]
So we see from \cref{Aut(C4timesX)} that the set of $\nh$-edges is invariant under all automorphisms of $BX$. On the other hand,  $K_{4,4}$ is edge-transitive, so the other edges are all in a single orbit.
\end{proof}

\begin{lem} \label{val5nhNotInvt}
Let $X = \Cay(\ZZ_n, S)$ be a nontrivially unstable, circulant graph of valency~$5$. 
If the set of $n/2$-edges is not invariant under $\Aut BX$, then:
	\begin{enumerate}
	\item \label{val5nhNotInvt-Z12t}
	$X = \Cay(\ZZ_{12k},\{ \pm s, \pm 2k, 6k \})$, for some $s,k \in \ZZ^+$,  with $s$~odd,
	and
	\item \label{val5nhNotInvt-orbits}
	$\Aut BX$ has exactly two orbits on the set of edges of $BX$.
	\end{enumerate}
\end{lem}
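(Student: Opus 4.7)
The plan is to parametrise $S = \{\pm a, \pm b, \nh\}$, which is possible because $X$ has odd valency~$5$, so $n$ is even by \cref{OddCirculant} and $\nh \in S$. The hypothesis provides some $\alpha \in \Aut BX$ sending an $\nh$-edge to a non-$\nh$-edge; after swapping $a$ and $b$ if necessary, we may assume the image is a $\pm a$-edge. Applying \cref{order2orbit} with source $\nh$ of order~$2$ then forces $S$ to contain every generator of $\langle a \rangle$. Since $\nh$ has order~$2$ while $|a| > 2$, the generators lie entirely in $\{\pm a, \pm b\}$, giving two alternatives: either $\phi(|a|) = 2$ (so $|a| \in \{3, 4, 6\}$) or $\phi(|a|) = 4$ with $\langle a \rangle = \langle b \rangle$ (so $|a| \in \{5, 8, 10, 12\}$).

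The second alternative is dispatched by using $\ZZ_n = \langle a, \nh \rangle$ (from connectedness) to pin down $n$: one gets $n = |a|$ whenever $|a|$ is even and $n = 10$ when $|a| = 5$, yielding respectively $\Cay(\ZZ_{10}, \{\pm 2, \pm 4, 5\})$, $\Cay(\ZZ_8, \{\pm 1, \pm 3, 4\})$, $\Cay(\ZZ_{10}, \{\pm 1, \pm 3, 5\})$, or $\Cay(\ZZ_{12}, \{\pm 1, \pm 5, 6\})$; the first and fourth are stable by \cref{small}, the second has $\nh$-invariance by \cref{val5Z8}\pref{val5Z8-invt} (contradicting the hypothesis), and the third is bipartite. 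For $|a| \in \{3, 4\}$ in the first alternative, I will consider the subgraph $Y \coloneqq \Cay(\ZZ_n, \{\pm a, \nh\})$, whose connected components are $K_3 \cartprod K_2$ or $K_4$, both stable (by \cref{small}\pref{small-6-2-3} or \cref{KnStable}). If the $Y$-edges were $\Aut BX$-invariant, \cref{StableSubgraph} would force $X$ stable; otherwise some automorphism mixes them with $\pm b$-edges, and \cref{order2orbit} (applied to $\pm a$- or $\nh$-edges mapping to $\pm b$-edges, all of source-orders in $\{2,3,4,6\}$) forces $|b| \in \{3, 4, 6\}$. A short enumeration of pairs $(|a|, |b|)$, together with $b \ne \pm a$, connectedness, and \cref{small}, leaves only stable graphs or $X = \Cay(\ZZ_{12}, \{\pm 2, \pm 3, 6\})$, which already lies in the target family with $k = 1$ and $s = 3$.

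The substantive case is $|a| = 6$. Write $n = 6j$, $a = j$ (WLOG), and $\nh = 3j$. The key claim is that $j$ is even. If $j$ were odd, then $a, \nh$ are odd, so non-bipartiteness forces $b$ even; a direct order computation shows every even $b \in \ZZ_{6j}$ has odd order $|b|$, making each connected component of $\Cay(\ZZ_n, \{\pm b\})$ an odd cycle, hence stable by \cref{val3}. If the $\pm b$-edges were $\Aut BX$-invariant, \cref{StableSubgraph} would give $X$ stable; otherwise some automorphism mixes them with $Y$-edges, and \cref{order2orbit} again forces $|b| \in \{3, 4, 6\}$. Since $|b|$ is odd, only $|b| = 3$ survives, which collapses to $X = K_6$ by connectedness, again stable. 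Hence $j$ is even; writing $j = 2k$ gives $n = 12k$, $a = 2k$, and $\nh = 6k$, all even, so connectedness forces $b$ to be odd. Writing $b = s$ establishes part \pref{val5nhNotInvt-Z12t}.

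For part \pref{val5nhNotInvt-orbits}, suppose for contradiction that some automorphism merges $\pm s$-edges with the $\{\pm 2k, 6k\}$-edges; applying \cref{order2orbit} in the reverse direction (using $|\nh| = 2$ and $|2k| = 6$, both in $\{2,3,4,6\}$) forces $S$ to contain every generator of $\langle s \rangle$. Because $s$ is odd while $\pm 2k, 6k$ are even with orders ($6$ and $2$) incompatible with $|s|$, only $\pm s$ can serve as generators, so $\phi(|s|) \le 2$; combined with $4 \mid |s|$ (which follows from $\gcd(s, 12k) = \gcd(s, 3k)$ being odd), only $|s| = 4$ remains, forcing $s = 3k$ with $k$ odd, and then connectedness forces $k = 1$. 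The only candidate for a merged orbit is thus the single graph $\Cay(\ZZ_{12}, \{\pm 2, \pm 3, 6\})$, for which the two-orbit structure is verified directly (e.g., by computer, or by observing that its only Wilson type is \pref{Wilson-C1}, whose associated instability automorphism preserves the parity-based partition of the connection set). The main obstacle throughout is the $|a| = 6$ case, because the $K_{3,3}$ components of $Y$ are bipartite and do not immediately yield stability via \cref{StableSubgraph}; the parity analysis of $j$ and the subsequent determination of $b$'s parity are the key subtle steps.
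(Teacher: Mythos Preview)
Your proof of part~\pref{val5nhNotInvt-Z12t} is correct and follows essentially the same strategy as the paper: apply \cref{order2orbit} to the $\nh$-edge image, bound $\phi(|a|)$, and eliminate the small cases. Your organization differs slightly---you treat $|a|\in\{3,4\}$ together via stability of the $Y$-components and then handle $|a|=6$ by a parity argument on $j=n/6$, whereas the paper splits on whether $|b|\in\{3,4,6\}$ and uses arc-transitivity of $BX_1$ to kill $|a|=3$---but the substance is the same. (Incidentally, you correctly include $|a|=10$ in the $\phi(|a|)=4$ list, which the paper's enumeration omits; it is of course immediately eliminated as bipartite.)

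Your treatment of part~\pref{val5nhNotInvt-orbits} differs more noticeably from the paper and has two small issues. First, your argument establishes that $BX$ is not edge-transitive (at least two orbits) by showing that merging forces $k=1$; but you do not explicitly argue that there are \emph{at most} two orbits. This is easy to fill in: for $k\ge 2$ your analysis of part~(1) went through the $|a|=6$ branch with $a=2k$, so $\{\pm 2k,6k\}$-edges already form one orbit by hypothesis, and $\pm s$-edges form one orbit by negation. For $k=1$ (which can also be reached via the $|a|=4$ branch, where it is $\{\pm 3,6\}$-edges that are known to lie together) you defer to direct verification anyway. Second, your alternative to computer verification for $k=1$---``its only Wilson type is~\pref{Wilson-C1}, whose associated instability automorphism preserves the parity-based partition''---is not a valid argument: exhibiting one parity-preserving element of $\Aut BX$ says nothing about the full group. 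The paper instead observes $BX\cong C_4\cartprod M_6$ and invokes \cref{Aut(C4timesX)}; either that or the computer check is needed. With these two points patched, your approach to part~(2) is cleaner than the paper's, which proves the orbit claim separately inside each subcase.
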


\begin{proof}
Write 
	\[ X = \Cay(\ZZ_n, \{\pm a, \pm b, \nh\}) .\]
By assumption, some automorphism of~$BX$ maps an $\nh$-edge to an $a$-edge (perhaps after interchanging $a$ and~$b$).
Then \cref{order2orbit} shows that every generator of $\langle a \rangle$ is in $S \setminus \{\nh\}$. So the number of generators of~$\langle a \rangle$ is $\le 4$ (and $|a| > 2$), so $|a| \in \{3, 4, 5, 6, 8, 12\}$.

\refstepcounter{caseholder} 

\begin{case} \label{val5nhNotInvtPf-a5812}
Assume $|a| \in \{5, 8, 12\}$.
\end{case}
The four generators of $\langle a \rangle$ are in~$S$, so they must coincide with $\pm a$ and~$\pm b$. Therefore, $\langle a, \nh \rangle = \langle a, b, \nh\rangle = \ZZ_n$, so $n \in \{10, 8, 12\}$. Therefore, $X$ is one of the following Cayley graphs: 
	\[ \text{$\Cay(\mathbb{Z}_{10},\{\pm2, \pm4, 5\})$,
	$\Cay(\mathbb{Z}_8,\{\pm1, \pm 3, 4 \})$,
	or
	$\Cay(\mathbb{Z}_{12},\{\pm1, \pm5, 6 \})$.} \] 
Note that the first and third graphs appear in \cref{small} under \pref{small-10-2-4-5} and \pref{small-12-1-5-6} respectively, and are therefore both stable. \fullCref{val5Z8}{invt} implies that the second graph is also not possible (since the statement of the \lcnamecref{val5nhNotInvt} requires that the set of $\nh$-edges is not invariant under $\Aut BX$).
So this \lcnamecref{val5nhNotInvtPf-a5812} cannot occur.

\begin{case} \label{val5nhNotInvtPf-a346}
Assume $|a| \in \{3,4,6\}$.
\end{case}
Note that then $\pm a$ are the only generators of $\langle a \rangle$. Because all elements of $S$ are pairwise distinct, it follows that $\langle a \rangle \neq \langle b \rangle$. Therefore, $|a|\neq |b|$.

\begin{subcase}
Assume $|b|\in \{3,4,6\}$.
\end{subcase}
We consider each of the three possibilities for $\{ |a|, |b| \}$:

\begin{subsubcase}
Assume $\{ |a|, |b| \} = \{3, 4\}$. 
\end{subsubcase}
Then 
	\[ X = \Cay(\mathbb{Z}_{12},\{\pm3, \pm4, 6\}) . \]
This graph is stable by \fullcref{small}{12-3-4-6}.

\begin{subsubcase}
Assume $\{ |a|, |b| \} = \{3, 6\}$. 
\end{subsubcase}
Then 
	\[ X = \Cay(\mathbb{Z}_6,\{\pm1, \pm2,3\}) \cong K_6 . \]
This graph is stable by \cref{KnStable}.

\begin{subsubcase}
Assume $\{ |a|, |b| \} = \{4, 6\}$. 
\end{subsubcase}
Then 
	\[ X = \Cay(\mathbb{Z}_{12},\{\pm2, \pm 3, 6\}) . \]
This graph appears in part~\pref{val5nhNotInvt-Z12t} of the statement of the \lcnamecref{val5nhNotInvt} with parameters $s = 3$ and $k = 1$.

Also note that 
	\[ BX = \Cay( \ZZ_{12} \times \ZZ_2 , \{ \pm(2,1), \pm(3,1), (6,1) \}) .\]
Since $\langle (3,1) \rangle \cap \langle (2,1) , (6,1) \rangle = \{0,0\}$, this implies
	\[ BX
	\cong C_4 \cartprod \Cay( 2\ZZ_{12} \times \ZZ_2 , \{ \pm(2,1), (6,1) \})
	\cong C_4 \cartprod M_6
	, \]
where $M_6$ is the M\"obius ladder with 6~vertices.
Then \cref{Aut(C4timesX)} implies that $BX$ is not edge-transitive. Since $\nh$-edges are in the same orbit as $a$-edges, this establishes part~\pref{val5nhNotInvt-orbits} of the statement of the \lcnamecref{val5nhNotInvt} for this graph.

\begin{subcase} \label{val5nhNotInvtPf-a346-bNot346}
Assume $|b|\not\in \{3,4,6\}$.
\end{subcase}
From here, we see from \cref{order2orbit} that no automorphism of $BX$ can map an $\nh$-edge to a $b$-edge (because $S$ cannot contain more than $2$ generators of $\langle b \rangle$, in addition to $\pm a$). Hence, the set of $b$-edges is invariant under all automorphisms of $BX$. (Note that this establishes part~\pref{val5nhNotInvt-orbits} of the statement of the \lcnamecref{val5nhNotInvt} for this \lcnamecref{val5nhNotInvtPf-a346-bNot346}.) Now, we see that every automorphism of $BX$ is also an automorphism of the graphs 
	\begin{align*}
	BX_1 &\coloneqq\Cay \bigl(\ZZ_n \times \ZZ_2,\{(\pm a,1),(\nh,1)\}\bigr) \\
	\intertext{and}
	BX_2 &\coloneqq\Cay \bigl(\ZZ_n \times \ZZ_2, \{(\pm b,1)\}) ,
	\end{align*}
which are the canonical double covers of 
	\[ \text{$X_1\coloneqq\Cay \bigl(\ZZ_n, \{\pm a, \nh\})$ 
	\ and \ 
	$X_2\coloneqq\Cay \bigl(\ZZ_n, \{\pm b\})$,} \]
respectively. Note that $BX_1$ is arc-transitive (because $a$-edges and $\nh$-edges are in the same orbit of $\Aut BX$).

If $|a|=3$, then every connected component of $BX_1$ is isomorphic to a $6$-prism, which is not arc-transitive. 
If $|a|=4$, then every connected component of $X_1$ is isomorphic to $K_4$, which is a stable graph by \cref{KnStable}, so it follows from \cref{StableSubgraph} that $X$ is stable, a contradiction.
Therefore, we must have $|a|=6$. 

The connected components of $X_2$ are $|b|$-cycles. If $|b|$ is odd, then these are stable (by \cref{OddCirculant}, for example). By another application of \cref{StableSubgraph}, it follows that $X$ is stable, a contradiction.

So we can now assume that $|a| = 6$ and $|b|$ is even. Write $n=6\ell$. From here $\nh = 3\ell$ and $\{\pm a\} = \{\ell,5\ell\}$. 

Note that if $\ell$ is odd, then $a$ and $b$ must both be odd (since $|a|$ and $|b|$ are even). Since $\nh = 3\ell$ is also odd, this means that all elements of~$S$ are odd, so $X$ is bipartite, a contradiction. 

Therefore, we know that $\ell$ is even, so we may write $\ell = 2k$. Then $n = 12k$, $\nh = 6k$ and $\{\pm a\} = \{\pm 2k\}$. In particular, $\pm a$ and~$\nh$ are all even. So $c$ must be odd (since $X$ is connected). This means that $X$ appears in part~\pref{val5nhNotInvt-Z12t} of the statement of the \lcnamecref{val5nhNotInvt} with parameter $s = c$.
\end{proof}

\begin{proof}[Proof of \cref{val5}] 
$(\Leftarrow)$ It suffices to show that each of the graphs in \pref{val5-Z12t} and \pref{val5-Z8} has the specified Wilson type. 
For any member of \pref{val5-Z12t}, it holds that $S_e = \{2k,6k,10k\}$; therefore $4k+S_e = S_e$, so the graph has Wilson type~\pref{Wilson-C1}.
For the graph in~\pref{val5-Z8}, see \fullcref{val5Z8}{WilsonType}.

$(\Rightarrow)$ Assume $X = \Cay(\ZZ_n, S)$ has valency~$5$, and is nontrivially unstable. Then either \cref{val5nhInvt} or \cref{val5nhNotInvt} must apply (depending on whether the set of $\nh$-edges is invariant or not). So $X$ is one of the graphs listed in these two \lcnamecref{val5nhInvt}s, and is therefore listed in the statement of the \lcnamecref{val5}.
\end{proof}

Combining \cref{val5nhInvt,val5Z8,val5nhNotInvt} also yields the following observation that will be used in \cref{val7Sect}:

\begin{cor} \label{val5orbits}
If $X$ is a nontrivially unstable, $5$-valent, circulant graph, then $\Aut BX$ has exactly two orbits on the edges of $BX$.
\end{cor}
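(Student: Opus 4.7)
The plan is a clean two-case analysis driven by whether the set of $n/2$-edges of $BX$ is setwise preserved by $\Aut BX$, since this is precisely the dichotomy organizing \cref{val5nhInvt,val5nhNotInvt}.

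First I would assume every automorphism of $BX$ preserves the set of $n/2$-edges; then \cref{val5nhInvt} forces $X = \Cay(\ZZ_8,\{\pm 1,\pm 3, 4\})$, and parts~\pref{val5Z8-invt} and~\pref{val5Z8-orbits} of \cref{val5Z8} immediately supply exactly two edge orbits for $\Aut BX$: the $n/2$-edges, and everything else (which lies in a single orbit because $BX \cong K_{4,4} \cartprod K_2$ with $K_{4,4}$ edge-transitive).

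In the complementary case, the hypothesis of \cref{val5nhNotInvt} is satisfied, and part~\pref{val5nhNotInvt-orbits} of that \lcnamecref{val5nhNotInvt} yields the conclusion directly.

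There is essentially no obstacle here: the three preceding lemmas were formulated precisely to record, for each nontrivially unstable $5$-valent circulant graph, not only which graph arises but also the $\Aut BX$-orbit structure on its edges. The corollary is then the observation that in both branches of the classification underlying \cref{val5}, exactly two edge orbits occur.
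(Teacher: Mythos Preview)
Your proposal is correct and matches the paper's approach exactly: the paper simply states that combining \cref{val5nhInvt,val5Z8,val5nhNotInvt} yields the corollary, and your two-case argument is precisely how that combination is carried out.
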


\section{Unstable circulants of valency \texorpdfstring{$6$}{6}} \label{val6Sect}

See \cref{explicit6} for a more explicit formulation of the following \lcnamecref{val6}.

\begin{thm} \label{val6}
Every nontrivially unstable, circulant graph of valency\/~$6$ has Wilson type~\pref{Wilson-C1}, \pref{Wilson-C2}, \pref{Wilson-C3}, or~\pref{Wilson-C4}.
\end{thm}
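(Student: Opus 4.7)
The plan is as follows. Let $X = \Cay(\ZZ_n, S)$ be a nontrivially unstable circulant of valency~$6$. By \cref{OddCirculant}, $n$ is even; since the valency is even, $\nh \notin S$, and writing $S_e = S \cap 2\ZZ_n$ and $S_o = S \setminus S_e$, both sets have even cardinality. Non-bipartiteness yields $|S_e| \ge 2$, while $n$ even together with connectedness gives $|S_o| \ge 2$, so $(|S_e|,|S_o|) \in \{(2,4),(4,2)\}$. If $BX$ is normal, then \cref{NormalCover} immediately yields Wilson type~\pref{Wilson-C4}. Otherwise, \cref{4cyclenormal} supplies $s,t,u,v \in S$ with $s+t = u+v \neq 0$ and $\{s,t\} \neq \{u,v\}$, which sharply restricts the additive structure of~$S$ and usually pins down a candidate Wilson parameter $h = s - u = v - t$.

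The proof then splits according to $(|S_e|,|S_o|)$. In each case one of the subgraphs $X_e = \Cay(\ZZ_n, S_e)$ and $X_o = \Cay(\ZZ_n, S_o)$ is a disjoint union of cycles and the other has valency~$4$. The key dichotomy is whether the set of $S_e$-edges (equivalently, $S_o$-edges) is invariant under $\Aut BX$. In the invariant case, I can restrict automorphisms of $BX$ to $BX_e$ and $BX_o$ separately, and invoke \cref{val4,Val4CoverIsNormal,BipNonnormalVal4} to determine the structure of the valency-$4$ piece; the Wilson parameter for the piece then lifts to $S$ using the cycle structure of the other piece, yielding Wilson type~\pref{Wilson-C1} or~\pref{Wilson-C4}. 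When $n \equiv 2 \pmod{4}$ and $2\ZZ_n \times \{0\}$ is a block for $\Aut BX$, \cref{InterchangeCI-valency} applies directly since the valency of $X_e$ is at most $4 \le 5$.

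In the non-invariant case I mimic the strategy of \cref{val5nhInvt,val5nhNotInvt}: some automorphism of $BX$ moves an $S_e$-edge to an $S_o$-edge, so \cref{order2orbit} forces $S$ to contain every generator of $\langle s \rangle$ for the corresponding element~$s$. This drastically restricts $|s| \in \{3,4,5,6,8,10,12\}$ and thereby confines $n$ to a short list. For each surviving configuration I either exhibit explicit parameters realizing Wilson type~\pref{Wilson-C2} (an element $h \in 1+2\ZZ_n$ with the prescribed behaviour on $S_o$ and on $s \equiv 0, -h \pmod 4$) or Wilson type~\pref{Wilson-C3} (a subgroup $H$ together with the set $R$), or eliminate the configuration by one of \cref{StableSubgraph,NormalSubgraph,2aneqs+t}, or show it is already covered by the computer verification of \fullcref{small}{val6}, which records $\Cay(\ZZ_{20},\{\pm 4,\pm 5,\pm 8\})$ and $\Cay(\ZZ_{20},\{\pm 2,\pm 5,\pm 6\})$ as stable precisely for this reduction.

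The main obstacle will be the non-invariant case with $n \equiv 0 \pmod{4}$, where neither a block argument nor a clean subgraph reduction is available. Here $\Aut BX$ may genuinely mix edges of different parities, and the delicate step is to show that this mixing forces enough symmetry on $S$ to realize one of the more subtle Wilson types~\pref{Wilson-C2} or~\pref{Wilson-C3}, rather than just~\pref{Wilson-C1} or~\pref{Wilson-C4}; the explicit families listed in \cref{explicit6} (which the present \lcnamecref{val6} summarizes) will guide which Wilson parameter to try in each of the finitely many subcases that survive the edge-orbit analysis.
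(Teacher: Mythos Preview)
Your high-level dichotomy (invariant vs.\ non-invariant under $\Aut BX$ for the $S_e$/$S_o$ edge partition) is reasonable, but the non-invariant branch has a genuine gap. You propose to mimic the valency-$5$ argument via \cref{order2orbit}, but that argument worked because $\nh \in S$ has order~$2$, which automatically satisfies the hypothesis ``$S$ contains every generator of $\langle s \rangle$'' in \cref{order2orbit}. In valency~$6$ we have $\nh \notin S$, so an $S_e$-edge being sent to an $S_o$-edge gives you no anchor: neither the $\gcd$ condition nor the ``all generators'' condition of \cref{order2orbit} is available a priori. Consequently your claim that this ``drastically restricts $|s|$'' and ``confines $n$ to a short list'' is unjustified---even if you did know $\phi(|s|) \le 4$ for one generator, the remaining two elements of $S$ are unconstrained, so $n$ is not bounded.

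The paper's route is different. It first observes (via \cref{2S'-BX} with $S' = S$, plus $|S_e| \neq |S_o|$) that the $S_e$/$S_o$ partition \emph{is} $\Aut BX$-invariant whenever $2s \neq 2t$ for all distinct $s,t \in S$. So non-invariance can only occur under a structural coincidence: some element has order~$4$, or some pair in $S$ differs by~$\nh$. These two possibilities become Cases~1 and~2 of the paper's proof, and each requires substantial ad hoc analysis (Case~1 alone invokes the valency-$5$ classification and a block argument on cosets of $\langle (k,0) \rangle$). Your plan does not anticipate this structural reduction and would not reach these cases. In the invariant situation (the paper's Case~3), the paper also does not simply ``lift a Wilson parameter from the valency-$4$ piece'': instead it applies \cref{4cyclenormal} to $BX$ to force $3a=b$ or $2a=b+c$, and then argues directly (via color-preserving automorphism techniques) that $BX$ is normal, yielding Wilson type~\pref{Wilson-C4}. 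Finally, note that invoking \cref{explicit6} to ``guide'' the argument risks circularity, since that corollary is derived from the present theorem.
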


\begin{proof}
Let $X = \Cay(\ZZ_n, S)$ be a nontrivially unstable, circulant graph of valency~$6$, and write $S = \{\pm a, \pm b, \pm c\}$. The proof is by contradiction, so assume that $X$ does not have any of the four listed Wilson types.
As usual, we let $\nh = n/2$, for convenience.
The proof considers several \lcnamecref{val6pf-2a=2b}s.

\refstepcounter{caseholder} 

\begin{case} \label{val6pf-2a=2b}
Assume $2a = 2b$.
\end{case}
This means $b = a + \nh$ (and $-b = -a + \nh$). Since $X$ does not have Wilson type~\pref{Wilson-C1}, we also know that 
	\[ (S \cap 2\ZZ_n) \neq \nh + (S \cap 2\ZZ_n) . \]
Therefore, we must have $S \cap 2\ZZ_n \neq \{\pm a, \pm b\}$. Since $S \not\subseteq 2 \ZZ_n$, this implies 
	\begin{align} \label{NotBothEven}
	\{a,b\} \not\subseteq 2\ZZ_n 
	. \end{align}

We claim that $|c|$ is not divisible by~$4$. To see this, note that otherwise $n/{\gcd(c,n)} = |c|$ is even, so $c/{\gcd(c,n)}$ is odd, and we also know that $|2c|$ is even, so $\nh \in \langle 2c \rangle$. This contradicts the fact that $X$ does not have Wilson type~\pref{Wilson-C3} (with $H = \langle \nh \rangle$, $R = \{\pm c\}$, and $d = \gcd(c, n)$). This completes the proof of the claim.

Also note that $2c \notin \{\pm 2a, \pm 2b\}$. (For example, if $2c = 2a$, then, since $c \neq a$, we must have $c = a + \nh = b$, which contradicts the fact that $a$, $b$, and~$c$ must be distinct, because $|S| = 6$.) Furthermore, since $|c|$ is not divisible by~$4$, we also know that $|c| \neq 4$, so $2c \neq -2c$. Thus, we have
        \begin{align} \label{val6pf-2c}
        2c \notin \{\pm 2a, \pm 2b, -2c \}
        . \end{align}
Therefore, we see from \cref{2S'-BX} (with $S' = \{\pm c\}$) that
        \begin{align} \label{val6pf-Aut(2S)}
        \begin{matrix}
        \text{every automorphism of $BX$ is also an} \\
        \text{automorphism of $\Cay(\ZZ_n \times \ZZ_2 , \{(\pm 2c, 0)\} \bigr)$.}
        \end{matrix}
        \end{align}

We now consider two \lcnamecref{val6pf-only}s.

\begin{subcase} \label{val6pf-only}
Assume $(c,1)$ is the only common neighbor of $(0,0)$ and $(2c,0)$ in~$BX$.
\end{subcase}
Let $\alpha$ be an automorphism of~$BX$ that fixes $(0,0)$, but does not fix $(0,1)$.
Combining~\pref{val6pf-Aut(2S)} with the assumption of this \lcnamecref{val6pf-only} implies that $\alpha$ must preserve the set of $c$-edges.

If $|c|$ is odd, then letting $S_0 = \{\pm c\}$ in \cref{StableSubgraph} implies that $X$ is stable (because cycles of odd length are stable), which is a contradiction.

Therefore, since $|c|$ is not divisible by~$4$, we must have $|c| \equiv 2 \pmod{4}$, so $(|c|/2) \cdot (c,1) = (\nh,1)$, so this implies that
        \[ \text{$\alpha \bigl( v + (\nh,1) \bigr) = \alpha(v) + (\nh,1)$ for every vertex $v$ of~$BX$.} \]
Also note that, since $\alpha$ preserves the set of $c$-edges in $BX$, it must also preserve the complement, which consists of the $a$-edges and $b$-edges. Hence, $\alpha$ is an automorphism of the canonical double cover of the $5$-valent circulant graph $X' = \Cay(\ZZ_n, S')$, where $S' = \{\pm a, \pm b, \nh\}$.

Let $X'_0$ be the connected component of~$X'$ that contains~$0$.
Note that $X'_0$ is not stable (since $\alpha$ does not fix $(0,1)$). Also note that $X'_0$ is connected, by definition. Furthermore, it is not bipartite, because $X$ is not bipartite and $\nh = kc$ where $k$ is odd. We therefore see from \cref{nottwinfree} that it is also twin-free. So $X'_0$ is nontrivially unstable, and must therefore be one of the graphs listed in \cref{val5} (after identifying the cyclic group $V(X_0')$ with some~$\ZZ_m$ by a group isomorphism). Since $2a = 2b$, it follows that $X'_0$ is the graph in part~\pref{val5-Z8} of the \lcnamecref{val5}, so
        \[ X' = \Cay(\ZZ_n,\{\pm n/8, \pm 3n/8, \nh\}) . \]
Therefore, if we write $|c| = 2k$, where $k$ is odd, then
        \[ X = \Cay(\ZZ_{8k}, \{ \pm k, \pm 3k, \pm c \} ) . \]
So $X$ has Wilson type~\pref{Wilson-C3}, with $H = \langle 2k \rangle$, $R = \{\pm c\}$,
        \[ d = \gcd(c, 8k) = \frac{8k}{|c|} = \frac{8k}{2k} = 4 , \]
and $H \nsubseteq d \ZZ_{8k}$.

\begin{subcase}\label{val6pf-notonly}
Assume $(c,1)$ is not the only common neighbor of $(0,0)$ and $(2c,0)$ in~$BX$.
\end{subcase}
This implies that
        \[ \text{$2c$ is equal to either $-2c$ or $a - c$ or $2a$ or $a + b$ or $a-b$} \]
(perhaps after interchanging $a$ with~$b$ and/or replacing both of them with their negatives).

However, we know from~\pref{val6pf-2c} that $2c \neq -2c$ and $2c \neq 2a$. Also, if $2c=a-b=\nh$, then $|c| = 4$, which contradicts the fact that $|c|$ is not divisible by~$4$. Thus, only two possibilities need to be considered.

\begin{subsubcase} \label{val6pf-notonly-a+b}
Assume $2c = a + b = 2a + \nh$.
\end{subsubcase}
Since $2a$ and $2c$ are even, this implies $\nh$ is even, which means $n \equiv 0 \pmod{4}$. It also implies that $a$ and $b$ have the same parity, so we conclude from~\pref{NotBothEven} that $a$ and~$b$ are odd. Since $X$ is not bipartite, then $c$ must be even. Therefore,
        \[ 0 \equiv 2c = 2a + \nh \equiv 2 + \nh \pmod{4}, \]
so $\nh \equiv 2 \pmod{4}$, which means $n/4$ is odd. Also note that $4c = 4a = 4b$. Therefore, setting $k \coloneqq n/4$, we get that
        \[ X = \Cay(\ZZ_{4k},\{\pm c, \pm (c + k), \pm (c - k) \}) . \]
If $c \equiv 0 \pmod{4}$, this graph has Wilson type \pref{Wilson-C2} (with $h = k$).

So we may assume $c \equiv 2 \pmod{4}$.
We will show that this implies $X$ is stable (which is a contradiction).
Any two vertices of $BX$ that are in the same coset of the subgroup $\langle (k, 0) \rangle$ have $4$ common neighbors, \refnote{val6pf-same-coset} but no two vertices of~$BX$ that are in different cosets of $\langle (k, 0) \rangle$ have more than~$3$ common neighbors. \refnote{val6pf-diff-cosets} Therefore, each coset of $\langle (k, 0) \rangle$ is a block for $\Aut BX$.
Also note that $(c + 2k,1)$ is the only element of the coset $(c,1) + \langle (k, 0) \rangle$ that is not adjacent to $(0,0)$. So every automorphism of~$BX$ must preserve the set of $(c + 2k)$-edges.
Since $c + 2k$ is an element of odd order (because $c + 2k \equiv 2 + 2 \equiv 0 \pmod 4$ and $n = 4k$ is not divisible by~$8$), we conclude from \cref{StableSubgraph} that $X$ is stable.

\begin{subsubcase}
Assume $2c = a - c$.
\end{subsubcase}
This implies $a = 3c$, so
        \[ \ZZ_n = \langle a, b, c \rangle = \langle 3c, 3c + \nh, c \rangle = \langle c, \nh \rangle . \]

If $|c|$ is even, we know that $\nh \in \langle c \rangle$, so $\langle c \rangle = \ZZ_n$.
This implies that $c$ is odd, so $\{a,b\} \cap 2\ZZ_n \neq \emptyset$ (because $X$ is not bipartite). Since $\{a,b\} \not\subseteq 2\ZZ_n$, this implies $\nh$ is odd (i.e., $n \equiv 2 \pmod{4}$). And \cref{2S'-BX} (together with~\pref{val6pf-2c}) implies that $\langle 2(c,1) \rangle = 2 \ZZ_n \times \{0\}$ is a block for the action of $\Aut BX$. Then, since $|S \cap 2\ZZ_n| = 2 \le 5$, we see from \cref{InterchangeCI-valency} that $X$ has Wilson type~\pref{Wilson-C1} or~\pref{Wilson-C4}.

We may now assume that $|c|$ is odd.
Since $2a=2b$ and we may assume that \cref{val6pf-notonly-a+b} does not apply, it is easy to see that $X$ is stable by \cref{2aneqs+t}, which is a contradiction.

\begin{case} \label{val6pf-a=4}
Assume $|a| = 4$, and the previous \lcnamecref{val6pf-2a=2b} does not apply.
\end{case}

\begin{subcase}
Assume some automorphism of $BX$ maps an $a$-edge to a $b$-edge.
\end{subcase}
Then \cref{order2orbit} tells us that $S$ contains every generator of $\langle b \rangle$. Since $|S| = 6$ (and the only generators of $\langle a \rangle$ are $\pm a$), it follows that $\phi(|b|) \leq 4$. We also know $|b|\geq 3$, so we conclude that 
	\[ |b|\in\{3,4,5,6,8,10,12\} . \]

 Also note that if $\phi(|b|) = 4$, then $\{\pm b, \pm c\}$ consists of the 4 generators of~$\langle b \rangle$, so $n = \lcm\bigl( |a|, |b| \bigr) = \lcm\bigl( 4, |b| \bigr)$.

\smallbreak

$\bullet$ If $|b|=4$, then $a$ and $b$ are generators of the same cyclic subgroup of order~$4$, but then $\{\pm a\} = \{\pm b\}$, a contradiction. 

\smallbreak

$\bullet$ If $|b| \in \{5,10\}$, then $\phi(|b|) = 4$, so $\{\pm b, \pm c\}$ consists of the 4 generators of~$\langle b \rangle$, and we have $n = \gcd\bigl( 4, |b| \bigr) = 20$. If $|b| = 5$, then $X$ is isomorphic to 
	$ \Cay(\ZZ_{20},\{ \pm 4, \pm 5, \pm 8 \}) $;
if $|b|=10$, then $X$ is isomorphic to 
	$\Cay(\ZZ_{20},\{ \pm 2, \pm 5, \pm 6 \}) $.
Both of these graphs are stable by \fullcref{small}{val6}.

\smallbreak

$\bullet$ If $|b| \in \{8,12\}$, then $\{\pm a \} \subseteq \langle b \rangle$, and $a$ is not a generator of this subgroup. So the other two generators of $\langle b \rangle$ must be $\pm c$. This implies that $b$ and $c$ each generate the whole group $\ZZ_n$. If $|b| = 8$, then $X$ is isomorphic to 
	$\Cay(\ZZ_8,\{ \pm 1, \pm 2, \pm 3 \})$, 
which is not twin-free since $4 + S = S$. If $|b| = 12$, then $X$ is isomorphic to 
	$\Cay(\ZZ_{12},\{ \pm 1, \pm 3, \pm 5 \})$,
which is bipartite. 

\smallbreak

$\bullet$ We may now assume $|b| \in \{3,6\}$. 
We consider two \lcnamecref{val6pf-a=4-atob-36}s.

\begin{subsubcase} \label{val6pf-a=4-atob-36}
Assume $|b| \in \{3,6\}$ and $|c| \in \{3,6\}$. 
\end{subsubcase}
If $|c|=|b|$, then $b$ and~$c$ generate the same cyclic subgroup, which has only two generators, which contradicts the fact that $\{\pm b\} \neq \{\pm c\}$. So we may assume $|b|=3$ and $|c|=6$. Then $X$ is isomorphic to 
	$\Cay(\ZZ_{12},\{ \pm 2, \pm 3, \pm 4 \})$,
which is not twin-free since $6 + S = S$. 

\begin{subsubcase}
Assume $|b| \in \{3,6\}$ and $|c| \notin \{3,6\}$. 
\end{subsubcase}
Then we can assume that no isomorphism of $BX$ maps an $a$-edge to a $c$-edge (for otherwise an earlier argument would apply after interchanging $b$ and~$c$). Since $a$-edges can be mapped to $b$-edges, this implies that no $b$-edge can be mapped to a $c$-edge. Therefore, every automorphism of $BX$ is an automorphism of $BX_0$, where $X_0 = \Cay(\ZZ_n,\{\pm a, \pm b\})$. Let $X_0'$ be the connected component of~$X_0$ that contains~$0$. Recalling that $|a|=4$, we see that if $|b|=3$ then 
	$ X_0'$ is isomorphic to $\Cay(\ZZ_{12},\{ \pm 3, \pm 4 \}) $,
and if $|b|=6$, then
	$ X_0'$ is isomorphic to $\Cay(\ZZ_{12},\{ \pm 2, \pm 3 \})$.
Both of these graphs are stable (by \fullcref{small}{12} or \cref{val4}). So \cref{StableSubgraph} tells us that $X$ is stable as well, a contradiction.

\begin{subcase}
Assume every automorphism of $BX$ maps $a$-edges to $a$-edges.
\end{subcase}
Then every automorphism of~$BX$ is also an automorphism of $BX_0$, where $X_0 = \Cay(\ZZ_n, \{\pm b, \pm c\})$. As usual, let $X_0'$ be the connected component of $X_0$ that contains~$0$. 

\begin{subsubcase}
Assume $X_0$ is not twin-free.
\end{subsubcase}
Then $b = c + \nh$ (perhaps after replacing $c$ with~$-c$). Since $a + \nh = -a$, this implies $S + \nh = S$, which contradicts the fact that $X$ is twin-free.

\begin{subsubcase}
Assume  $X_0$ is not connected \textup(but is twin-free\textup).
\end{subsubcase}
We know that $X_0'$ is not stable (for otherwise \cref{StableSubgraph} contradicts the fact that $X$ is not stable). Since $X_0'$ is connected (by definition) and twin-free (by assumption), this implies that it has even order (see \cref{OddCirculant}). Since $\langle a, b, c \rangle = \ZZ_n$ and $|a| = 4$, we conclude that  $\langle b, c \rangle = 2 \ZZ_n$ and $n \equiv 4 \pmod{8}$.

\smallbreak

$\bullet$ If $X_0'$ is bipartite, then $BX_0$ is isomorphic to the union of four disjoint copies of~$X_0'$. Since $|a| = 4$ and $BX$ is connected, this implies that $BX \cong C_4 \cartprod X_0'$. 
Since $X_0'$ has even valency, and $|V(X_0')|/2$ is odd, it it is easy to see that $X_0'$ does not have $K_2$ as a Cartesian factor. (If $X_0' \cong K_2 \cartprod Y$, then $Y$ is a regular graph of odd valency and odd order, which is impossible.) This implies (by \cref{Aut(C4timesX)}) that
	\[ |{\Aut BX}| = |{\Aut (C_4 \cartprod X_0')}|= |{\Aut C_4}| \cdot |{\Aut X_0' }|  = 8 \, |{\Aut X_0' }| . \]
Also note that, since $X_0'$ is a bipartite circulant graph whose order is congruent to~$2$ modulo~$4$, it is (isomorphic to) the canonical double cover of a circulant graph of odd order. Since connected, twin-free, circulant graphs of odd order are stable (see \cref{OddCirculant}) and $2a$ is the element of order~$2$ in~$\ZZ_n$, we conclude that if $\beta$ is any automorphism of~$X_0'$, then $\beta(v + 2a) = \beta(v) + 2a$ for every $v \in V(X_0')$. This implies that we can extend~$\beta$ to an automorphism~$\beta'$ of~$X$ by defining $\beta'(a + v) = a + \beta(v)$ for $v \in V(X_0')$; so $\Aut X$ contains a copy of $\Aut X_0'$. Since $\Aut X$ also contains the translation $v \mapsto v + a$ and the negation automorphism $v \mapsto -v$, we conclude that 
	$|{\Aut X}| \ge 4 \, |{\Aut X_0' }|$.
Combining this with the above calculation of $|{\Aut BX}|$ contradicts the fact that $X$ is not stable.

\smallbreak

$\bullet$ If $X_0'$ is not bipartite, then $X_0'$ is nontrivially unstable, and therefore must be described by \cref{val4}. Since $|2\ZZ_n| \equiv 2 \pmod{4}$, we must be in the situation of \fullref{val4}{k2=1}: $\langle c \rangle = 2 \ZZ_n$, and $b = mc + \nh$, for some $m \in \ZZ_n^\times$, such that $m^2 \equiv \pm 1 \pmod{n}$. (Technically, \cref{val4} only tells us that $m^2 \equiv 1 \pmod{n/2}$. However, we know that $m$ is odd, so $m^2 \equiv 1 \pmod{4}$. Therefore $m^2 \equiv 1 \pmod{n}$.) We also have $ma + \nh = \pm a$ (since $|a| = 4$). Therefore $m S + \nh = S$, which means that $X$ has Wilson type~\pref{Wilson-C4}.

\begin{subsubcase}
Assume $X_0$ is connected and bipartite.
\end{subsubcase}
 This means that $b$ and~$c$ are odd, so $S \cap 2\ZZ_n = \{\pm a\}$. Since $|a| = 4$, we conclude that $(S \cap 2\ZZ_n) + \nh = (S \cap 2\ZZ_n)$, so $X$ has Wilson type~\pref{Wilson-C1}.

\begin{subsubcase}
Assume $X_0$ is nontrivially unstable.
\end{subsubcase}
 Then \cref{val4} tells us that $X_0$ has Wilson type~\pref{Wilson-C4}. Then $X$ also has Wilson type~\pref{Wilson-C4}, with the same value of~$m$.

\begin{case} \label{val6pf-neither}
Assume neither of the previous \lcnamecref{val6pf-2a=2b}s apply \textup(even after permuting and/or negating some of the generators\textup).
\end{case}
Let $\alpha$ be an automorphism of $BX$ that fixes $(0,0)$.
The assumption of this \lcnamecref{val6pf-neither} implies that $2s \neq 2t$ for all $s,t \in S$, such that $s \neq t$. Therefore, \cref{2S'-BX} implies that the cosets of $2\ZZ_n \times \{0\}$ are blocks for the action of $\Aut BX$. So $\alpha$ must fix the two cosets that are in $\ZZ_n \times \{0\}$, and either fixes or interchanges the other two.  However, also note that $S$ is the disjoint union of 
	\[ \text{$S_e \coloneqq S \cap 2\ZZ_n$ and $S_o \coloneqq S \cap (1 + 2 \ZZ_n)$.} \]
Each of these two sets has even cardinality (since it is closed under inverses), and $|S_e| + |S_o| = 6$, so it is easy to see that $|S_e| \neq |S_o|$. Therefore, $\alpha$ cannot interchange $S_e$ and~$S_o$, which means that $\alpha$ must fix all four cosets of $2\ZZ_n \times \{0\}$. So 
	\begin{align} \label{val6pf-neither-edges}
	\text{$\alpha$ maps $S_e$-edges to $S_e$-edges, and maps $S_o$-edges to $S_o$-edges.}
	\end{align}
Hence, by \cref{StableSubgraph}, we know that 
	\[ \text{the connected components of $\Cay(\ZZ_n, S_e)$ are unstable.} \]
(The connected components of $\Cay(\ZZ_n, S_o)$ are always unstable, since they are bipartite.) We also know that $\Cay(\ZZ_n, S_e)$ is twin-free (since $X$ does not have Wilson type~\pref{Wilson-C1}). Therefore, we see from \cref{OddCirculant} that $|\langle S_e \rangle|$ is even, so
	\[ n \equiv 0 \pmod{4} . \]

From \cref{NormalCover}, we see that $BX$ is not normal. So applying \cref{4cyclenormal} to $BX$ implies that  
	either \ $|a| = 4$ \ or \ $3a = b$ \ or \ $2a = 2b$ \ or \ $2a = b + c$
 (perhaps after permuting and/or negating some of the generators).
Since neither of the previous cases apply, this implies that 
	\[ \text{either $3a = b$ or $2a = b + c$.} \]
We will consider each of these two possibilities as a separate \lcnamecref{val6pf-neither-3a=3b}.

Recall that \fullcref{CayleyDefn}{S1} introduced $\tilde s$ as an abbreviation for $(s,1)$ with $s\in S$.

\begin{subcase} \label{val6pf-neither-3a=3b}
Assume $3a = b$.
\end{subcase}
Then also $3\tilde a = \tilde b$. 

We claim that $|\tilde a| \ge 10$. First of all, we have $|\tilde a| \neq 2$, because $|a| \notin \{1,2\}$. 
We also have $|\tilde a| \neq 4$, because $|a| \neq 4$ (since \cref{val6pf-a=4} does not apply). 
Now, note that if $|\tilde a|=6$, then $|a| = 3$ or $|a| = 6$; however, the fact that $b = 3a$ would then imply that $|a| = 3|b|$, so $|b| \in \{1,2\}$, which is a contradiction. 
Finally, note that if $|a|=8$, then $2(-a) = -2a = 6a = 2(3a) = 2b$, which contradicts the assumption that \cref{val6pf-2a=2b} does not apply.
This completes the proof of the claim.

\begin{subsubcase}
Assume $|\tilde a|=10$. 
\end{subsubcase}
Then $|a|=5$ or $|a|=10$. However, if $|a|=5$, then $|b|=5$ as well and they generate the same cyclic subgroup of $\ZZ_n$. In particular, the connected components of $\Cay(\ZZ_n,\{\pm a, \pm b\})$ are isomorphic to $K_5$, which is stable, so by \cref{StableSubgraph}, we get that $X$ is stable, a contradiction.

Therefore, we must have $|a|=10$. (Then $|b|=10$ and they generate the same cyclic subgroup.)
Since $n \equiv 0 \pmod{4}$, we may write 
	\[ \text{$n = 20k$ for some $k \in \ZZ$.} \]
Since $|a| = |b| = 10 \not\equiv 0 \pmod{4}$ and $n \equiv 0 \pmod{4}$, it is clear that $|c|$ is even (in fact, it is divisible by~$4$). It follows that $|c|$ is either $n$ (if $|c|$ is divisible by~$5$) or $n/5$ (otherwise). So we see that (up to isomorphism) $X$ is
	\[ \text{$\Cay(\ZZ_{20k},\{\pm c,2k,6k,14k,18k\})$ \ with $c = 1$ or $c = 5$.} \]
From~\pref{val6pf-neither-edges}, we know that $\alpha$ is also an automorphism of the graphs 
	\[ Y_1 = \Cay(\ZZ_{20k}\times \ZZ_2,\{(\pm c, 1)\}) \] 
and 
	\[ Y_2 = \Cay(\ZZ_{20k}\times \ZZ_2,\{(2k,1),(6k,1),(14k,1),(18k,1)\}) . \]

Note that $(0,0)$ and $(10k,1)$ lie in the same connected component of $Y_2$, which is isomorphic to $K_{5,5}-5K_2$. In this component, $(10k,1)$ is the unique vertex at distance $3$ from $(0,0)$, so $\alpha$ fixes $(10k,1)$. 

Also note that the vertices $(0,1)$ and $(10k,1)$ lie in the same connected component of~$Y_1$, which is a cycle (of length $20k$ or $4k$), and that these two vertices are diametrically opposite on this cycle. Since we already know that $\alpha$ fixes $(10k,1)$, it must also fix $(0,1)$. We conclude that $X$ is stable, a contradiction.

\begin{subsubcase}
Assume $|\tilde a|=12$.
\end{subsubcase}
Then $|a|=12$. Since $b = 3a$, we have $|b|=|3a|=4$, which contradicts the fact that \cref{val6pf-a=4} does not apply.

\begin{subsubcase} \label{val6pf-neither-3a=3b-a>14}
Assume $|\tilde a| \ge 14$.
\end{subsubcase}
Let $\mathsf{B}_2$ be the subgraph induced by the ball of radius~$2$ centered at~$0$ in $\Cay(\ZZ_n\times \ZZ_2, \{\pm \tilde a, \pm \tilde b\})$. 
This graph is drawn in \cref{B2}, under the assumption of this \lcnamecref{val6pf-neither-3a=3b-a>14} that $|\tilde a| \ge 14$. From this drawing, it can be seen that $\pm \tilde b$ are the only vertices in~$\mathsf{B}_2$ that have a pendant edge. (These edges are colored white in the figure.) So $\{\pm \tilde b\}$ is $\alpha$-invariant. This means that $\alpha$ maps $b$-edges to $b$-edges. Since we already know that $\alpha$ maps $c$-edges to $c$-edges, it must also map $a$-edges to $a$-edges. 

\begin{figure}[htbp]
\centering
\includegraphics[scale=1.1]{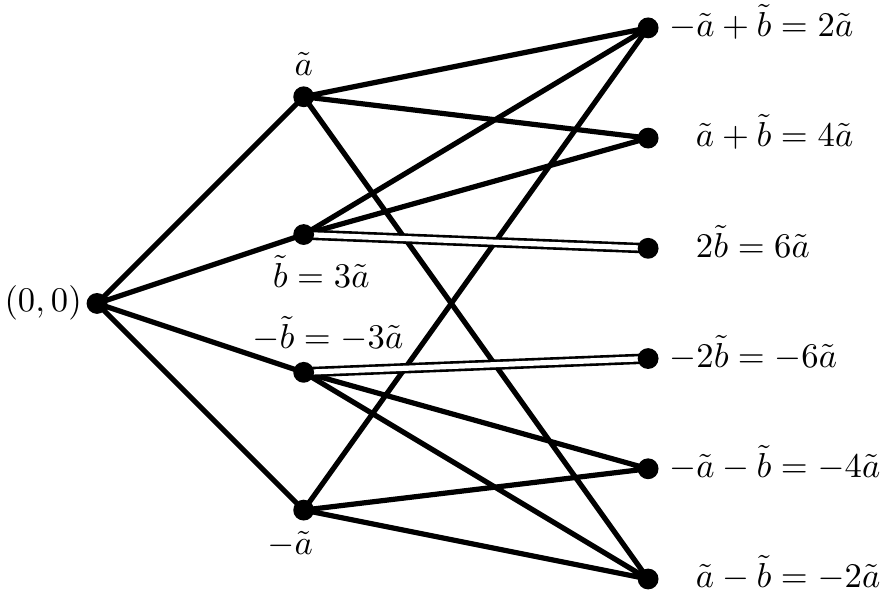}
\caption{The subgraph $\mathsf{B}_2$ induced by the ball of radius $2$ centered at $(0,0)$ in $\Cay(\ZZ_n \times \ZZ_2, \pm \tilde a,\pm \tilde b)$.}
\label{B2}
\end{figure}
%
%
%
%
%
%
%
%
%

In the terminology of~\cite{HujdurovicEtal-CCA}, this means that $\alpha$~is a \emph{color-preserving} graph automorphism. We will now use a simple argument from \cite[\S4]{HujdurovicEtal-CCA} to establish that $\alpha$ is a group automorphism (so $BX$ is normal, and then it follows from \cref{NormalCover} that $X$ has Wilson type~\pref{Wilson-C4}).

We provide only a sketch of the proof.
By composing with negation, if necessary, we may assume that $\alpha(\tilde a) = \tilde a$. This implies $\alpha(k \tilde a) = k \tilde a$ for all $k \in \ZZ$. Let $m \in \ZZ^+$, such that $m \tilde c \in \langle \tilde a \rangle$. Then $\alpha(k m \tilde c) = k m \tilde c$ for all $k \in \ZZ$. If $2 m \tilde c \neq 0$, this implies that $\alpha(\ell \tilde c) = \ell \tilde c$ for all $\ell \in \ZZ$; in fact, $\alpha(k \tilde a + \ell \tilde c) = k \tilde a + \ell \tilde c$, for all $k,\ell \in \ZZ$, which means that $\alpha$ is the identity map, contradicting the fact that $\alpha \notin \Aut X \times \ZZ_2$.

Therefore, we may assume $2 m \tilde c = 0$, for all $m \in \ZZ$, such that $m \tilde c \in \langle \tilde a \rangle$. This means that $|\langle \tilde c \rangle \cap \langle \tilde a \rangle| \le 2$, so there is a group automorphism of $\ZZ_n$ that fixes~$\tilde a$ and negates~$\tilde c$. So we may assume that $\alpha(\tilde c) = \tilde c$. Since $\tilde a + \tilde c$ is the only common neighbor of $\tilde a$ and~$\tilde c$, we must have $\alpha(\tilde a + \tilde c) = \tilde a + \tilde c$. Similarly, we must have $\alpha(2 \tilde a + \tilde c) = 2\tilde a + \tilde c$ and $\alpha(\tilde a + 2\tilde c) = \tilde a + 2\tilde c$. Repeating the argument shows that $\alpha(k \tilde a + \ell \tilde c) = k \tilde a + \ell \tilde c$, for all $k,\ell \in \ZZ$, so, once again, $\alpha$ is the identity map.

\begin{subcase} \label{val6pf-neither-2a=b+c}
Assume $2a = b + c$.
\end{subcase}
(Note that this implies $2\tilde a = \tilde b + \tilde c$.)
We will show that $\alpha(k \tilde b + \ell \tilde a) = k \alpha(\tilde b) + \ell \alpha(\tilde a)$, for all $k, \ell \in \ZZ^{\ge 0}$. (This implies that $\alpha$ is a group automorphism of~$\ZZ_n$, so $BX$ is normal, so \cref{NormalCover} implies that $X$ has Wilson type~\pref{Wilson-C4}.)

Since $b$ and~$c$ have the same parity, we see from \pref{val6pf-neither-edges} that $\alpha$ maps $\{b,c\}$-edges to $\{b,c\}$-edges, and maps $a$-edges to $a$-edges. In particular, we may assume (by composing with negation if necessary) that 
	\begin{align} \label{val6pf-neither-2a=b+c-fixa}
	\text{$\alpha$ fixes every element of~$\langle \tilde a \rangle$.}
	\end{align}

Since $\alpha$ maps $\{b,c\}$-edges to $\{b,c\}$-edges, $\alpha$ is an automorphism of the graph $\Cay(\ZZ_n\times\ZZ_2,\{\pm \tilde{b},\pm \tilde{c}\})$, which has at most two connected components. Let $X_0'$ be its connected component containing $(0,0)$ with the vertex set $\langle\tilde b,\tilde c\rangle$. Since $\alpha$ fixes $(0,0)$, it restricts to an automorphism of $X_0'$. Because we are assuming \cref{val6pf-2a=2b} and \cref{val6pf-a=4} do not hold and we can additionally assume \cref{val6pf-neither-3a=3b} does not hold either, \cref{4cyclenormal} applies to $X_0'$. It follows that the restriction of $\alpha$ to $V(X_0')$ is a group automorphism of $\langle \tilde b, \tilde c \rangle$. We let $b', c' \in \{ \pm b, \pm c \}$, such that $\alpha(\tilde b) = \tilde{b'}$ and $\alpha(\tilde c) = \tilde{c'}$. It follows that:
\begin{align}\label{val6pf-2a=b+c-normal}
\alpha(k\tilde b + \ell\tilde c) = k\tilde{b'}+\ell\tilde{c'} \qquad \forall k,\ell\in\ZZ
\end{align}

Notice that:
	\begin{align*}
	\tilde b + \tilde c &= 2\tilde a
		&& (\text{assumption of \cref{val6pf-neither-2a=b+c}})
	 \\&= \alpha(2\tilde a) 
	 	&& \pref{val6pf-neither-2a=b+c-fixa}
	\\&= \alpha(\tilde b+ \tilde c)
		&& \text{($2\tilde a = \tilde b+ \tilde c$)}
	\\&= \tilde b' + \tilde c' 
		&& \pref{val6pf-2a=b+c-normal}
	. \end{align*}

As we have already established that the hypothesis of \cref{4cyclenormal} holds, we obtain that $\{b', c'\} = \{b,c\}$.

To complete the proof of this \lcnamecref{val6pf-neither-2a=b+c}, we now prove by induction on~$k$ that, for all $k,\ell \in \ZZ^{\ge 0}$, we have
	\[ \alpha(k \tilde b + \ell \tilde a) = k \tilde b' + \ell \tilde a. \]
The base case is provided by~\pref{val6pf-neither-2a=b+c-fixa}, so assume $k > 0$. 
Since $\alpha$ maps $a$-edges to $a$-edges, there exists $\epsilon \in \{\pm 1\}$, such that $\alpha(k \tilde b + \ell \tilde a) = k \tilde b' + \epsilon \ell \tilde a$ for all $\ell \in \ZZ$. We wish to show $\epsilon = 1$, so suppose $\epsilon = -1$. (This will lead to a contradiction.) Letting $\ell = -2$ tells us 
	\begin{align*}
	(k -1) \tilde b' - \tilde c' 
	&= \alpha \bigl( (k -1) \tilde b - \tilde c \bigr) 
		&& \text{\pref{val6pf-2a=b+c-normal}}
	 \\&= \alpha(k \tilde b - 2\tilde a) 
	 	&& \text{($2 \tilde a = \tilde b + \tilde c$)}
	\\&= k \tilde b' + 2 \tilde a
		&& (\epsilon = -1)
	\\&= k \tilde b' + (\tilde b + \tilde c) 
	 	&& \text{($2 \tilde a = \tilde b + \tilde c$)}
	\\&= k \tilde b' + (\tilde b' + \tilde c') 
	 	&& (\{b, c\} = \{b', c'\})
	. \end{align*}
This implies $-2\tilde b' = 2 \tilde c'$, which contradicts the fact that \cref{val6pf-2a=2b} does not apply. 
\end{proof}

The following result provides a more explicit version of \cref{val6}. \Cref{val6nontrivial} explains which of these graphs are nontrivially unstable.

\begin{cor} \label{explicit6}
A circulant graph $X = \Cay(\ZZ_n, \{\pm a, \pm b, \pm c \})$ of valency\/~$6$ is unstable if and only if either it is trivially unstable, or it is one of the following:
\noprelistbreak
	\begin{enumerate}
	
	\setcounter{enumi}{0}
	
		\item \label{explicit6-C1Se2}
		$\Cay(\ZZ_{8k},\{\pm a, \pm b, \pm 2k\})$, where $a$ and~$b$ are odd,  which is of Wilson type~\pref{Wilson-C1}.
	
		\item \label{explicit6-C1Se4}
		$\Cay(\ZZ_{4k},\{\pm a, \pm b, \pm b + 2k\})$, where $a$ is odd and $b$ is even, which is of Wilson type~\pref{Wilson-C1}.
		
		\item \label{explicit6-C2}
		$\Cay \bigl( \ZZ_{4k}, \bigl\{ \pm a, \pm (a + k), \pm(a - k) \bigr\} \bigr)$, where $a \equiv 0 \pmod{4}$ and $k$ is odd,  which is of Wilson type~\pref{Wilson-C2}.
		
 		\item \label{explicit6-C3order2}
                $\Cay(\ZZ_{8k},\{\pm a, \pm b, \pm b + 4k\})$, where $a$ is even and $|a|$ is divisible by~$4$, which is of Wilson type~\pref{Wilson-C3}.

        \item \label{explicit6-C3order4}
                $\Cay(\ZZ_{8k},\{\pm a, \pm k, \pm 3k\})$, where $a \equiv 0 \pmod{4}$ and $k$ is odd, which is of Wilson type~\pref{Wilson-C3}.

		\item \label{explicit6-C4}
		$\Cay(\ZZ_{4k},\{\pm a, \pm b, \pm m b + 2k \})$, where 
		\[ \text{$\gcd(m, 4k) = 1$, \quad $(m-1)a \equiv 2k \pmod{4k}$, \quad and} \]
		\[ \text{either \ 
			$m^2 \equiv 1 \pmod{4k}$
			\ or \ 
			$(m^2 + 1) b \equiv 0 \pmod{4k}$,} \]
		  which is of Wilson type~\pref{Wilson-C4}.

		 \item \label{explicit6-C4more}
           $\Cay(\ZZ_{8k},\{\pm a, \pm b, \pm c \})$, where there exists $m \in \ZZ$, such that
           \[ \text{$\gcd(m, 8k) = 1$, \quad $m^2 \equiv 1 \pmod{8k}$, \quad and} \]
        \[ (m-1)a \equiv (m + 1)b \equiv (m + 1)c \equiv 4k \pmod{8k} , \]
     which is of Wilson type~\pref{Wilson-C4}.
		  
	\end{enumerate}
\end{cor}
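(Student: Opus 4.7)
The plan is to prove both directions, building on Theorem~\ref{val6}. For $(\Leftarrow)$, I would verify each of the seven families directly: for families (1) and (2), taking $h = 4k$ and $h = 2k$ respectively shows that $h + S_e = S_e$, giving Wilson type~\pref{Wilson-C1}; for family (3), $h = k$ satisfies the two conditions defining~\pref{Wilson-C2} (using $a \equiv 0 \pmod{4}$ and $k$ odd); for families (4) and (5), the subgroup $H = \langle 4k \rangle$ together with $R = \{\pm a\}$ exhibits Wilson type~\pref{Wilson-C3}, with $d = \gcd(a, 8k)$; and for families (6) and (7), the element $m$ provided in the hypothesis satisfies $m S + n/2 = S$, i.e., Wilson type~\pref{Wilson-C4}. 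In each case one also confirms that $X$ is connected, nonbipartite, and twin-free so that the instability is nontrivial.

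For $(\Rightarrow)$, Theorem~\ref{val6} lets me assume $X = \Cay(\ZZ_n, \{\pm a, \pm b, \pm c\})$ has Wilson type \pref{Wilson-C1}, \pref{Wilson-C2}, \pref{Wilson-C3}, or~\pref{Wilson-C4}, and I case-split accordingly. In case~\pref{Wilson-C1}, since $n/2 \notin S$, we have $|S_e| \in \{2, 4\}$; the value $|S_e| = 2$ forces the unique pair in $S_e$ to have order~$4$ (so $n = 8k$ and $S_e = \{\pm 2k\}$), yielding family (1), while $|S_e| = 4$ unfolds into family (2) after analyzing the action of $\langle h \rangle$ on $S_e$. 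In case~\pref{Wilson-C2}, the conditions on $h$ (together with the mod-$4$ closure requirement on $S$) force $n = 4k$ with $k$ odd, $a \equiv 0 \pmod{4}$, and $S = \{\pm a, \pm(a + k), \pm(a - k)\}$, which is family (3). In case~\pref{Wilson-C3}, a split on $|H|$ produces families (4) and (5), with the constraints on $n/d$, $r/d$, and the inclusion relations between $H$, $d\ZZ_n$, and $2d\ZZ_n$ serving to limit the possibilities. Finally, in case~\pref{Wilson-C4}, analyzing the involution $\sigma(s) = m s + n/2$ of $S$ and its interaction with negation determines the explicit parameters, producing families (6) and (7).

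The principal obstacle is the case analysis under~\pref{Wilson-C3} and~\pref{Wilson-C4}. Under~\pref{Wilson-C3}, the freedom in choosing $H$ and the induced $R$ inside a six-element set admits several configurations; one must carefully rule out subcases (for example, larger $H$ or $|R| > 2$) that would violate the arithmetic constraints, and consolidate the surviving cases into families (4) and (5). Under~\pref{Wilson-C4}, the subtlety is that $\sigma$ may either fix or invert each of the three pairs $\{\pm a\}$, $\{\pm b\}$, $\{\pm c\}$, and a further subdivision by whether $m^2 \equiv 1$ or $(m^2 + 1) b \equiv 0$ modulo the group order yields the two parameter ranges of families (6) and (7). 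Throughout, one also keeps in mind (as noted in the Remark following Theorem~\ref{Wilson}) that a single graph may simultaneously satisfy several Wilson conditions, so one needs to show only that each nontrivially unstable valency-$6$ circulant lies in at least one of the listed families, not that the families are disjoint.
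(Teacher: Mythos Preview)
Your overall strategy matches the paper's: invoke Theorem~\ref{val6} and then case-split on the Wilson type. A few points need adjustment.

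First, in $(\Leftarrow)$ you should \emph{not} verify that the listed graphs are connected, nonbipartite, and twin-free. The statement only claims these graphs are unstable (the disjunction ``trivially unstable \emph{or} in one of the families'' already covers any members that happen to be trivially unstable), and indeed the paper handles the question of which listed graphs are nontrivially unstable separately in Remark~\ref{val6nontrivial}.

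Second, your choice $H = \langle 4k\rangle$ for family~(5) is wrong: with $R = \{\pm a\}$ one gets $d = \gcd(a,8k)$, and when $a \equiv 0 \pmod 8$ this makes $n/d$ odd, so \pref{Wilson-C3} fails. The paper's $(\Rightarrow)$ analysis of the~\pref{Wilson-C3} case derives family~(5) from the subcase $|H| = 4$, so the intended subgroup is $H = \langle 2k\rangle$; then $\{\pm k, \pm 3k\} + H \subseteq S$ and $H \not\subseteq d\ZZ_{8k}$ because $k$ is odd.

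Third, your description of the \pref{Wilson-C4} analysis is muddled. The map $\sigma(s) = ms + n/2$ need not send each pair $\{\pm s\}$ to itself; the paper's argument first uses $|S_e| \neq |S_o|$ to see that $\sigma$ preserves parity (hence $4 \mid n$), then arranges $(m-1)a = n/2$, and the decisive split is whether $\sigma$ stabilizes each of $\{\pm b\}$, $\{\pm c\}$ (giving family~(7), with the parity argument forcing $(m+1)b = (m+1)c = n/2$ and hence $8 \mid n$) or interchanges them (giving family~(6), where examining $\sigma^2(b) = \pm b$ yields the dichotomy $m^2 \equiv 1$ versus $(m^2+1)b \equiv 0$). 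Your sketch has the right ingredients but conflates the two branches.
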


\begin{proof}
($\Leftarrow$) It is easy to see that each graph has the specified Wilson type, and is therefore unstable.
(The arguments in the other direction of the proof can provide some hints, if required.)

($\Rightarrow$) If $X$ is unstable, then we know from \cref{val6} that $X$ has Wilson type~\pref{Wilson-C1}, \pref{Wilson-C2}, \pref{Wilson-C3}, or~\pref{Wilson-C4}. We treat each of these possibilities as a separate case.

\refstepcounter{caseholder} 

\begin{case}
Assume $X$ has Wilson type~\pref{Wilson-C1}.
\end{case}

\begin{subcase}
Assume $|S_e| = 2$.
\end{subcase}
Then we may assume $S_e = \{\pm c \}$. Since $X$ has Wilson type~\pref{Wilson-C1}, we must have $-c = c + \nh$, so $|c| = 4$. (Since $c \in S_e$, this implies that $n$ is divisible by~$8$.) Therefore $X$ is as described in part~\pref{explicit6-C1Se2} of the \lcnamecref{explicit6}.

\begin{subcase}
Assume $|S_e| = 4$.
\end{subcase}
Then we may assume $S_e = \{\pm b, \pm c\}$. Since $X$ has Wilson type~\pref{Wilson-C1} (and $|S_e|$ is a power of~$2$), we must have $S_e + \nh = S_e$. Therefore, we may assume $c = b + \nh$ (because we cannot have $b + \nh = -b$ \emph{and} $c + \nh = -c$). Since $b$ and~$c$ are even, this implies $\nh$ is even, so $n$ is divisible by~$4$. Hence, $X$ is as described in part~\pref{explicit6-C1Se4} of the \lcnamecref{explicit6}.

\begin{case}
Assume $X$ has Wilson type~\pref{Wilson-C2}.
\end{case}
Let $h \in 1 + 2 \ZZ_n$, such that \pref{Wilson-C2-So} and~\pref{Wilson-C2-s+b} of condition~\pref{Wilson-C2} hold.

\begin{subcase}
Assume $|S_o| = 2$.
\end{subcase}
Then we may assume $S_o = \{\pm c\}$, so part~\pref{Wilson-C2-So} of~\pref{Wilson-C2} tells us $-c = c + 2h$ and $c = -c + 2h$. This implies that $|c| = 4$ and $|h| = 4$. So we may assume $c = h = n/4$. Then it is obvious that $-c \equiv -h \pmod{4}$, so part~\pref{Wilson-C2-s+b} of~\pref{Wilson-C2} implies that $0 = -c + h \in S$. This contradicts the fact that graphs in this paper do not have loops (see~\cref{GraphsAreSimple}).

\begin{subcase}
Assume $|S_o| = 4$.
\end{subcase}
Then we may assume $S_o = \{\pm b, \pm c\}$, so part~\pref{Wilson-C2-So} of~\pref{Wilson-C2} implies that we may assume $c = b + 2h$ (by replacing $b$ with~$-b$ if necessary), and we must have $|2h| \in \{2,4\}$.

\begin{subsubcase}
Assume $|2h| = 2$.
\end{subsubcase}
Then we have $c = b + \nh$. We may assume $b \equiv -h \pmod{4}$ (by interchanging $b$ and~$c$ if necessary). So part~\pref{Wilson-C2-So} of condition~\pref{Wilson-C2} implies $b + h \in S$, which means $a = b + h$ (perhaps after replacing $a$ with~$-a$). In other words, we have $b = a - h$; then $c = b + 2h = a + h$.  Therefore, since $|h| = 4$, we have 
	\[ S = \bigl\{ \pm a, \pm \bigl( a + (n/4) \bigr), \pm \bigl( a - (n/4) \bigr) \bigr\} . \] 
We see from part~\pref{Wilson-C2-s+b} of condition~\pref{Wilson-C2} that $S$ contains an element that is divisible by~$4$, so we must have $a \equiv 0 \pmod{4}$.
Also, since $a + (n/4) = b$ is odd, we know that $n/4$ is odd.
Hence, $X$ is as described in part~\pref{explicit6-C2} of the \lcnamecref{explicit6}.

\begin{subsubcase} \label{explicit6pf-C2-4-4}
Assume $|2h| = 4$.
\end{subsubcase}
Then 
	\[  \{\pm b, \pm c\} = \{b , b + 2h, b + 4h, b + 6h \}, \]
so $-c = b + 4h$ (and $-b = b + 6h$). Since $c = b + 2h$, this implies $2c2= 6h$ has order~$4$, so $|b| = 8$. So $S_o = \{\pm n/8, \pm 3n/8 \}$ consists of all of the elements of order~$8$. Since $-h$ has order~$8$, we conclude that $-h \in S_o$, so part~\pref{Wilson-C2-s+b} of condition~\pref{Wilson-C2} implies $0 \in S$, which (again) contradicts the fact that graphs in this paper do not have loops.

\begin{case}
Assume $X$ has Wilson type~\pref{Wilson-C3}.
\end{case}
Let $H$, $R$, and~$d$ be as in the definition of Wilson type~\pref{Wilson-C3}. We may assume that $X$ does not have Wilson type~\pref{Wilson-C1}, so $R$ contains at least one element of~$S_e$; for definiteness, let us say that $a$ is in $R \cap S_e$. Since $r/d$ is odd for every $r \in R$, we know that all elements of~$R$ have the same parity, so this implies $R \subseteq S_e$. 

\begin{subcase}
Assume $|R| = 2$.
\end{subcase}
This means $R = \{\pm a\}$, $d = \gcd(n,a)$, and $\{\pm b, \pm c\} + H = \{\pm b, \pm c\}$. 
Let $h$ be a generator of~$H$, so $b + h = c$ (perhaps after replacing $c$ with~$-c$). 

\begin{subsubcase}
Assume $|h| = 2$ \textup(so $h = \nh$\textup).
\end{subsubcase}
Since $n/d$ is even, we know that $\nh \in d \ZZ_n$. Therefore, the last condition in~\pref{Wilson-C3} implies that $|a|$ is divisible by~$4$.
(Since $a$ is even, this implies that $n$ is divisible by~$8$.)
Hence, $X$ is as described in part~\pref{explicit6-C3order2} of the \lcnamecref{explicit6}.

\begin{subsubcase}
Assume $|h| = 4$.
\end{subsubcase}
The argument of \cref{explicit6pf-C2-4-4} shows $\{\pm b, \pm c\} = \{\pm n/8, \pm 3n/8\}$. Since the elements of~$S$ cannot all be even, we know that $n/8$ is odd. Then, since $a$ is even, we know that $2a \ZZ_n$ does not contain an element of order~$4$, so we conclude from the last sentence of condition~\pref{Wilson-C3} that $a \ZZ_n$ does not contain~$H$. This means that $a$ is divisible by~$4$.
Hence, $X$ is as described in part~\pref{explicit6-C3order4} of the \lcnamecref{explicit6}.

\begin{subcase}
Assume $|R| = 4$.
\end{subcase}
This means we may assume $R = \{\pm a, \pm b\}$, $H = \langle \nh \rangle$, and $|c| = 4$.
Since $n/d$ is even, we know that the elements of~$R$ have even order, so $\nh \in \langle R \rangle = d \ZZ_n$. Therefore, the last sentence of condition~\pref{Wilson-C3} implies $\nh \in 2 d \ZZ_n$, which means $n/d$ is divisible by~$4$. Since $r/d$ is odd for every $r \in R$, this implies $|a| \equiv |b| \equiv 0 \pmod{4}$. Since $a \in R \subseteq S_e$, we conclude that $n$ is divisible by~$8$. So $c = \pm n/4$ is even. This contradicts the fact that at least one element of~$S$ must be odd (since $S$ generates~$\ZZ_n$).

\begin{case}
Assume $X$ has Wilson type~\pref{Wilson-C4}.
\end{case}
This means there exists $m \in \ZZ$ such that $\nh + m S = S$ and $\gcd(m, n) = 1$. Let $\alpha(x) \coloneqq \nh + m x$, so $\alpha(S) = S$.
Since $|S_e| \neq |S_o|$, we know that $\alpha(S_e) \neq S_o$. Therefore, we must have $\alpha(S_e) = S_e$ and $\alpha(S_o) = S_o$. This means that $\nh$ is even (so $n$ is divisible by~$4$).  

Assume, without loss of generality, that $b$ and~$c$ have the same parity (and $a$ has the opposite parity). 

We must have $\nh + m  a \in \{\pm a\}$. So we may assume $\nh + m  a = a$ (by replacing $m $ with~$-m $ if necessary), so $(m -1)a = \nh$. 

We also have $\alpha(\{ \pm b, \pm c \}) = \{ \pm b, \pm c \}$.
Therefore, either $\alpha$ fixes $\{\pm b \}$ and~$\{\pm c\}$,
    or $\alpha$ interchanges these two sets.

    \begin{subcase}
    Assume $\alpha(b) \in \{ \pm b \}$ and $\alpha(c) \in \{ \pm c \}$.
    \end{subcase}
    This means $\nh + m b = \pm b$ and $\nh + m c = \pm c$, so
    $(m \pm 1) b = \nh$ and $(m \pm 1) c = \nh$. Since
    $(m - 1) a = \nh$, and $a$ has the opposite parity from $b$
    and~$c$, we cannot have $(m - 1) b = \nh$ or $(m - 1)c = \nh$.
    Therefore, we must have $(m + 1) b = \nh$ and $(m + 1) c = \nh$.
    
    We know that $m$ is odd, so either $m - 1$ or $m + 1$ is divisible by~$4$. Since
    $(m - 1)a = \nh$ and $(m + 1)b = \nh$, this implies that $\nh$ is divisible by~$4$.
    So $n$ is divisible by~$8$

    Note that, since $m + 1$ is even, we have
        \[ (m^2 - 1)a = (m + 1)(m - 1)a = (m + 1) \nh = 0 . \]
    Similarly, we also have $(m^2 - 1)b = 0$ and $(m^2 - 1)c = 0$.
    Since $\langle a, b, c \rangle = \ZZ_n$, this implies that
    $m^2 \equiv 1 \pmod{n}$. Hence, $X$ is as described in
    part~\pref{explicit6-C4more} of the \lcnamecref{explicit6}.

    \begin{subcase}
    Assume $\alpha(b) \in \{ \pm c \}$ and $\alpha(c) \in \{ \pm b \}$.
    \end{subcase}
    We have $\alpha^2(b) \in \{\pm b\}$, which means
	$\pm b = \nh + m (\nh + m b) = m ^2 b$,
so there exists $\epsilon \in \{\pm 1\}$, such that $(m ^2 + \epsilon) b = 0$.

\begin{subsubcase}
Assume $\epsilon = -1$.
\end{subsubcase}
This means $(m ^2 - 1) b = 0$. Since $m $ must be odd, we also have 
	\[(m ^2 - 1)a = (m  + 1) \cdot (m -1) a = (m  + 1) \nh  = 0 \]
and $(m ^2 - 1) \nh = 0$. Since 
	\[ \ZZ_n = \langle a, b, c \rangle = \langle a, b, \nh + m b \rangle = \langle a, b, \nh \rangle ,\]
we conclude that $(m ^2 - 1) \ZZ_n = \{0\}$, so $m ^2 \equiv 1 \pmod{n}$.
Hence, $X$ is as described in part~\pref{explicit6-C4} of the \lcnamecref{explicit6} (with $m^2 \equiv 1 \pmod{n}$).

\begin{subsubcase}
Assume $\epsilon = 1$.
\end{subsubcase}
This means $(m ^2 + 1) b = 0$. 
Hence, $X$ is as described in part~\pref{explicit6-C4} of the \lcnamecref{explicit6} (with $(m^2 + 1) b = 0$).
\end{proof}

\begin{rem} \label{val6nontrivial}
It is easy to determine whether a graph in \cref{explicit6} is nontrivially unstable. Indeed, here are quite simple necessary and sufficient conditions for each of the lists in the statement of the \lcnamecref{explicit6}:
\begin{enumerate}
        \item[\fullref{explicit6}{C1Se2}:]
        $\gcd(a, b, k) = 1$ and $b \notin \{\pm a + 4k\}$.

        \item[\fullref{explicit6}{C1Se4}:]
        $\gcd(a, b, k) = 1$ and $a \notin \{\pm k\}$.

        \item[\fullref{explicit6}{C2}:]
        $\gcd(a, k) = 1$.

        \item[\fullref{explicit6}{C3order2}:]
        $\gcd(a, b, 4k) = 1$ and $a \notin \{\pm 2k\}$.

        \item[\fullref{explicit6}{C3order4}:]
        $\gcd(a, k) = 1$.
        
        \item[\fullref{explicit6}{C4}:]
        $\gcd(a, b, 2k) = 1$, either $a$ or~$b$ is even,
                        and either $a \notin \{\pm k\}$ or $mb \notin \{\pm b\}$.

        \item[\fullref{explicit6}{C4more}:]
        $\gcd(a, b, c, 4k) = 1$, either $a$ or~$b$ is even,
                        and either $a \notin \{\pm 2k\}$ or $c \notin \{\pm b + 4k\}$.

\end{enumerate}
\end{rem}

\begin{proof}
For convenience, let $S = \{\pm a, \pm b, \pm c\}$.

It is clear that $X$ is connected if and only if $\gcd(S \cup \{n\}) = 1$. Therefore, the first condition in each part of the \lcnamecref{val6nontrivial} is precisely the condition for~$X$ to be connected.

Knowing that $\gcd(S \cup \{n\}) = 1$ implies that at least one element of~$S$ is odd (since $n$ is even). Therefore, $X$ is nonbipartite if and only if at least one element of~$S$ is even. It is obvious that $S$ has an even element in all parts of \cref{explicit6} other than~\pref{explicit6-C4} and~\pref{explicit6-C4more}, so the statement of the \lcnamecref{val6nontrivial} only adds this as an explicit condition for parts~\pref{explicit6-C4} and~\pref{explicit6-C4more}. (In part~\pref{explicit6-C4more}, the fact that $(m + 1)b \equiv (m + 1)c \equiv 4k \pmod{8k}$ implies that $b$ and~$c$ have the same parity, so there is no need to mention the possibility that $c$ is even.)

Now, let us suppose that $X$ is not twin-free (but is connected and nonbipartite). Then by \fullcref{nottwinfree}{Y}, it follows that $X\cong Y\wr \overline{K_m}$ with $Y$ being a connected circulant of valency $\delta$ and $m\geq 2$ an integer. Clearly, $6=\delta m$, so $m\in \{2,3,6\}$.
        \begin{itemize}
        \item If $m=6$, then $\delta=1$ and $Y\cong K_2$. Consequently, $X\cong K_2\wr \overline{K_6}\cong K_{6,6}$, which contradicts the fact that $X$ is not bipartite.
        \item If $m=3$, then $\delta=2$ and $Y$ is a cycle of even length, which again contradicts the fact that $X$ is not bipartite.
        \item If $m=2$, then from the proof of \fullcref{nottwinfree}{d=4}, it can be concluded that the unique twin of $0$ is~$\nh$. Therefore, it must hold that $\nh + S = S$.
        \end{itemize}
Thus, we see that $X$ is twin-free if and only if $\nh + S \neq S$.

In parts~\pref{explicit6-C2} and~\pref{explicit6-C3order4}, it is clear that if $\nh + S = S$, then $a + \nh = -a$, which means $a = \pm n/4$.  Since $a \equiv 0 \pmod{4}$, this implies $n$ is divisible by~$16$, which contradicts the fact that $k$ is odd. So all of the graphs of these two types are twin-free. All other parts of the \lcnamecref{val6nontrivial} add a final condition that specifically rules out the possibility that $\nh + S = S$.
\end{proof}

\section{Unstable circulants of valency \texorpdfstring{$7$}{7}} \label{val7Sect}

\begin{thm} \label{val7}
A circulant graph\/ $\Cay(\ZZ_n, S)$ of valency\/~$7$ is unstable if and only if either it is trivially unstable, or it is one of the following:
\noprelistbreak
\begin{enumerate}

\item \label{val7-Z6}
$\Cay(\ZZ_{6k},\{\pm 2t, \pm2(k - t), \pm2(k + t), 3k\})$, with $k$~odd, which has Wilson type \pref{Wilson-C1}.

\item \label{val7-bc-odd}
$\Cay(\ZZ_{12k},\{\pm 2k,\pm b,\pm c, 6k\})$, with $b$ and $c$ odd, which has Wilson type \pref{Wilson-C1}.

\item \label{val7-Z20}
$\Cay(\ZZ_{20k},\{\pm t,\pm 2k,\pm 6k,10k\})$, with $t$~odd, which has Wilson type \pref{Wilson-C1}.

\item \label{val7-Z4}
$\Cay(\ZZ_{4k},\{\pm t,  \pm(k - t), 2k\pm t, 2k\})$, with $k$~odd and $t \equiv k \pmod{4}$, which has Wilson type \pref{Wilson-C2}.

\item \label{val7-Z8}
$\Cay(\ZZ_{8k},\{\pm 4t,\pm k,\pm 3k, 4k\})$, with $k$ and~$t$~odd, which has Wilson type \pref{Wilson-C3}.

\item \label{val7-Z12-12k}
$\Cay(\ZZ_{12k},\{\pm t, \pm(4k - t), \pm(4k + t), 6k\})$, with $t$~odd, which has Wilson type \pref{Wilson-C3}.

\end{enumerate}
\end{thm}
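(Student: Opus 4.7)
The plan is to prove both directions of the biconditional separately. For the ``if'' direction, we verify each of the six families satisfies the claimed Wilson type by exhibiting parameters explicitly: for instance, in family~\pref{val7-Z6}, taking $h = 2k$ gives $h \in 2\ZZ_n \setminus \{0\}$ with $h + S_e = S_e$, establishing Wilson type~\pref{Wilson-C1}; analogous direct checks work for the other five families (the witness $h$ being respectively $6k$, $10k$, $k$, and $4k$ for families~\pref{val7-bc-odd}, \pref{val7-Z20}, \pref{val7-Z4}, and \pref{val7-Z8}, while family~\pref{val7-Z12-12k} uses $H = \langle 4k\rangle$, $R = \{6k\}$, $d = 6k$ for Wilson type~\pref{Wilson-C3}). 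Nontrivial instability of each is then confirmed by verifying connectedness (a gcd condition on the parameters), nonbipartiteness (some element of $S$ is even), and twin-freeness (via \cref{nottwinfree} or by checking $n/2 + S \neq S$). For the ``only if'' direction, we write $X = \Cay(\ZZ_n, \{\pm a, \pm b, \pm c, n/2\})$ (using \cref{OddCirculant} to ensure $n$ is even and $n/2 \in S$) and split into two main cases according to whether the set of $n/2$-edges is invariant under $\Aut BX$, mirroring the structure of the proof of \cref{val5}.

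If the $n/2$-edges are invariant under $\Aut BX$, then every automorphism of $BX$ restricts to an automorphism of $BX_0$, where $X_0 = \Cay(\ZZ_n, \{\pm a, \pm b, \pm c\})$ has valency~$6$, and by \cref{StableSubgraph} the graph $X_0$ must itself be unstable. We then treat separately the trivially unstable sub-cases for~$X_0$ (disconnected, bipartite, or not twin-free) using \cref{Val4Subgraph}, \cref{ReduceNonNormal}, \cref{NormalSubgraph}, and the auxiliary stability results of \cref{small} to rule out most configurations; in the nontrivially unstable sub-case we invoke \cref{explicit6} to enumerate the possible structures of $X_0$ and, for each of the seven families listed there, determine which admit an extension to an unstable valency-$7$ graph via the addition of $n/2$ to the connection set. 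The surviving configurations correspond, after suitable reparametrization, to a portion of the families in the statement.

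If the $n/2$-edges are not invariant, then some automorphism of $BX$ sends an $n/2$-edge to an $s$-edge with $s \in \{\pm a, \pm b, \pm c\}$. By \cref{order2orbit}, $S \setminus \{n/2\}$ contains every generator of $\langle s \rangle$, so $\phi(|s|) \le 6$, restricting $|s|$ to the finite list $\{3,4,5,6,7,8,9,10,12,14,18\}$. When $\phi(|s|) \in \{4, 6\}$ the set $\{\pm a, \pm b, \pm c\} \setminus \{\pm s\}$ is essentially forced to consist of the remaining generators of $\langle s \rangle$, pinning down $n$; when $\phi(|s|) \le 2$ one of the other two generators is free, and further bifurcation according to its order is required, following the pattern of \cref{val5nhNotInvt}. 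Using \cref{small}, \cref{Val4Subgraph}, and \cref{StableSubgraph} to discard configurations that produce stable graphs yields the remaining families. The principal obstacle is the sheer breadth of case analysis in the invariant case when $X_0$ is itself nontrivially unstable: the seven families of \cref{explicit6} interact in subtle ways with the addition of an $n/2$-edge, and the same valency-$7$ graph may be realized by several different labelings of $\{a,b,c\}$ (and sometimes by both branches of the main case split), so careful bookkeeping is required to avoid double-counting or omission and to match each surviving configuration to the corresponding entry of the statement.
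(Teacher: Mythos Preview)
Your high-level bifurcation (whether the $n/2$-edges are $\Aut BX$-invariant) matches the paper, and your treatment of the non-invariant branch via \cref{order2orbit} and the list $\phi(|s|)\le 6$ is also what the paper does. However, there are concrete problems in both directions.

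\textbf{The ``if'' direction contains errors.} Your claimed witnesses $h=6k$ for family~\pref{val7-bc-odd} and $h=10k$ for family~\pref{val7-Z20} do not work: for instance, in~\pref{val7-bc-odd} we have $S_e=\{\pm 2k,6k\}$, and $6k+2k=8k\notin S_e$. The correct witness in both cases is $h=4k$. Also, family~\pref{val7-Z8} has Wilson type~\pref{Wilson-C3}, which is specified by a triple $(H,R,d)$, not a single element~$h$; the paper uses $H=\langle 2k\rangle$, $R=\{\pm 4t,4k\}$, $d=4$. Your phrase ``the witness $h$ being $4k$'' is not meaningful here.

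\textbf{The invariant branch of the ``only if'' direction is underspecified, and the proposed route via \cref{explicit6} is not the paper's and is not obviously viable.} You propose: when $X_0=\Cay(\ZZ_n,\{\pm a,\pm b,\pm c\})$ is nontrivially unstable, classify it via \cref{explicit6}, then for each of the seven families there check which extensions by $n/2$ land in the theorem's list. But you do not carry out this check, and it is far from routine. For example, families \fullref{explicit6}{C4} and \fullref{explicit6}{C4more} impose multiplicative congruence conditions on $m$ that have no visible counterpart in the six target families, so matching them would require substantial further argument (or, in some cases, showing the resulting $X$ is actually stable, contradicting the hypothesis). The paper avoids this entirely: after disposing of the cases where $X_0$ is disconnected or where $X_0$ is bipartite with $BX_0$ arc-transitive, it does \emph{not} classify $X_0$. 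Instead it analyzes the $\Aut BX$-orbit structure on the edges of $BX_0$ directly (is some single $s$-edge class invariant? are all non-$\nh$ edges in one orbit?), reducing in most subcases to $4$- and $5$-valent subgraphs handled by \cref{Val4Subgraph}, \cref{val5nhInvt}, and \cref{val5orbits}. The one place \cref{explicit6} is invoked is at the very end, and only negatively: to show that a particular $6$-valent graph is \emph{not} on that list, hence stable. Your plan treats \cref{explicit6} as the main engine of the invariant case; the paper uses it only as a final exclusion device.
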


\begin{rem}
It is easy to see that each connection set listed in \cref{val7} contains both even elements and odd elements, so none of the graphs are bipartite. Then it follows from \fullcref{nottwinfree}{prime} that the graphs are also twin-free. Therefore, a graph in the list is nontrivially unstable if and only if it is connected. And this is easy to check: a graph listed in \cref{val7} is connected if and only if $\gcd(t,k) = 1$ (except that the condition for part~\pref{val7-bc-odd} is $\gcd(b, c, k) = 1$). 
\end{rem}

\begin{rem}
In the statement of \cref{val7}, it is implicitly assumed that $k,t \in \ZZ^+$. In order for the graphs to have valency~$7$, the parameters $k$ and~$t$ (or $k$, $b$, and~$c$) must be chosen so that all of the listed elements of the connection set are distinct in the cyclic group. (Note that this implies $k > 1$ in parts \pref{val7-Z4}, \pref{val7-Z6}, and~\pref{val7-Z8}.)
\end{rem}

\begin{proof}[\bf Proof of \cref{val7}]
$(\Leftarrow)$ 
This is the easy direction.
For each family of graphs in the statement of the \lcnamecref{val7}, we briefly justify the specified Wilson type (which implies that the graphs are unstable):
	\begin{enumerate}

	\item[\pref{val7-Z6}] Type~\pref{Wilson-C1} with $S_e=\{\pm2t, \pm2(k - t), \pm2(k + t)\}$ and $2k + S_e = S_e$.

	\item[\pref{val7-bc-odd}] Type~\pref{Wilson-C1} with $S_e=\{\pm 2k,6k\}$ and $4k+S_e=S_e$.

	\item[\pref{val7-Z20}] Type~\pref{Wilson-C1} with $S_e=\{\pm2k,\pm6k,\pm10k\}$ and $4k+S_e=S_e$.

	\item[\pref{val7-Z4}] Type~\pref{Wilson-C2} with $h = k$ and $S_o = \{\pm t, 2k \pm t\}$. Note that $s \equiv 0$ or~$-k \pmod{4}$ if and only if 
		\[ s \in \{ -t, k - t, -(k - t), 2k + t \}, \]
in which case 
		\begin{align*}
		 s + k &\in \{ k - t, 2k - t, t, -k + t\} 
		. \end{align*}
	(Note that $-k + t = -(k - t)$ is in the connection set.)

	\item[\pref{val7-Z8}] Type~\pref{Wilson-C3} with $H = \langle 2k\rangle$, $R=\{\pm 4t, 4k\}$, $d = 4$, $n/d = 2k$, $\{r/d\} = \{t, k\}$, and $H \not\subseteq d\ZZ_{8k}$.

	\item[\pref{val7-Z12-12k}] Type~\pref{Wilson-C3} with $H=\langle 4k\rangle$, $R=\{6k\}$, $d = 6k$, $n/d = 2$, $\{r/d\} = \{1\}$, and $H \not\subseteq d\ZZ_{12k}$.
\end{enumerate}

\bigbreak

($\Rightarrow$) Let $X = \Cay(\ZZ_n, S)$ be a nontrivially unstable circulant graph of valency $7$. Since the graph has odd valency (or by \cref{OddCirculant}), $n$ must be even. We will write $S=\{\pm a, \pm b, \pm c, \nh\}$ for its connection set.

\refstepcounter{caseholder} 

\begin{case} \label{val7pf-NotInvt}
Assume there exists an automorphism of $BX$ that maps an $\nh$-edge to an $a$-edge. 
\end{case}
By \cref{order2orbit}, $S$ must contain every generator of $\langle a \rangle$. It follows that $\phi(|a|) \leq 6$ (and we also know $|a| \neq 2$). In particular, we obtain that 
	\[ \text{$|a|$ is an element of the set $\{3,4,5,6,7,8,9,10,12,14,18\}$.} \]
We consider the following cases.

\begin{subcase}
Assume $|a| \in \{7,9\}$.
\end{subcase}
Then $S$ must contain the $6$ generators of $\langle a \rangle$. Since $|a|$ is odd, we know $\nh \notin \langle a \rangle$, so we conclude that $\ZZ_n = \langle a \rangle \times \langle \nh \rangle$. In particular, $X$ is one of the following: 
	\[ \text{$\Cay(\ZZ_{14},\{\pm 2, \pm 4, \pm 6, 7\})$ or $\Cay(\ZZ_{18},\{\pm 2, \pm 4, \pm 8, 9\})$.} \]
Note that the first graph appears in \fullcref{small}{14-2-4-6-7}, which contradicts the assumption that $X$ is (nontrivially) unstable. The second graph is listed in part~\pref{val7-Z6} of the statement of the \lcnamecref{val7} (with parameters $k=3$ and $t = 1$).

\begin{subcase}
Assume $|a| \in \{14,18\}$.
\end{subcase}
Then $S$ again contains the $6$ generators of $\langle a \rangle$. Additionally, since $|a|$ is even, we have $\nh \in \langle a \rangle$. We conclude that $a$ generates $\ZZ_n$ and it follows that $X$ is one of the following: 
	\[ \text{$\Cay(\ZZ_{14},\{\pm 1, \pm 3, \pm 5, 7\})$ or $\Cay(\ZZ_{18},\{\pm 1, \pm 5, \pm 7, 9\})$.} \]
It is clear that both of these graphs are bipartite, which contradicts the assumption that $X$ is nontrivially unstable.

\begin{subcase} \label{val7Pf-NotInvt-a=5}
Assume $|a| = 5$.
\end{subcase}
Then $S$ contains the $4$ generators of $\langle a \rangle$. We may suppose without loss of generality that besides $\pm a$, the remaining two generators of $\langle a \rangle$ are $\pm b$.

\begin{subsubcase}
Assume $|c| \in \{3,4,6\}$.
\end{subsubcase}
Then $X$ is one of the following graphs, all of which are stable by \cref{small}.
\begin{enumerate}
\item $|c| = 3 \implies X = \Cay(\ZZ_{30},\{\pm 6,\pm 10,\pm 12, 15\})$. See \fullcref{small}{30-6-10-12-15}.
\item $|c| = 4 \implies X = \Cay(\ZZ_{20},\{\pm4,\pm5,\pm8,10\})$. See \fullcref{small}{20-4-5-8-10}.
\item $|c| = 6 \implies X = \Cay(\ZZ_{30},\{\pm5,\pm6,\pm12,15\})$. See \fullcref{small}{30-5-6-12-15}.
\end{enumerate}

\begin{subsubcase} \label{cinv}
Assume $|c| \not \in \{3,4,6\}$.
\end{subsubcase}
From here, $\langle c \rangle$ has more than two generators, while $\langle a\rangle = \langle b\rangle$ has no generators besides $\pm a,\pm b$. Hence, there cannot exists an automorphism of $BX$ mapping an $s$-edge, with $s \in S\setminus \{\pm c\}$, onto a $c$-edge, since \cref{order2orbit} would then imply that $S$ contains all generators of $\langle c \rangle$.
It follows the set of $c$-edges is invariant under $\Aut BX$. Hence, every automorphism of $BX$ maps $S_0$-edges to $S_0$-edges, where we define $S_0\coloneqq \{\pm a, \pm b, \nh\}$. The connected components of $\Cay(\ZZ_n,S_0)$ are isomorphic to 
	\[ \Cay(\ZZ_{10},\{2,4,5,6,8\}) . \]
By \fullcref{small}{10-2-4-5}, this graph is stable. It then follows from \cref{StableSubgraph} that $X$ is stable as well, a contradiction.

\begin{subcase}
Assume $|a| = 8$.
\end{subcase}
We may assume (as in \cref{val7Pf-NotInvt-a=5}) that $\pm a$ and $\pm b$ are the four generators of~$\langle a \rangle$ (which contains $\nh$, because $|a|$ is even).

\begin{subsubcase}
Assume $|c|\in \{3,4,6\}$.
\end{subsubcase}
Then $X$ is one of the following graphs
\begin{enumerate}
\item $|c| = 3$ $\implies$ $X  = \Cay(\ZZ_{24},\{\pm3,\pm8,\pm9,12\})$. This is stable, by \fullcref{small}{24-3-8-9-12}.
\item $|c| = 4$ $\implies$ $X = \Cay(\ZZ_{8},\{\pm 1,\pm2,\pm3,4\})\cong K_8$. This is stable, by \cref{KnStable}.
\item $|c| = 6$ $\implies$ $X = \Cay(\ZZ_{24},\{\pm3,\pm4,\pm9,12\})$. This graph appears in part~\pref{val7-Z8} of the statement of the \lcnamecref{val7} (with parameters $k=3$ and $t = 1$). 
\end{enumerate}

\begin{subsubcase}
Assume $|c|\notin \{3,4,6\}$.
\end{subsubcase}
By the assumption of \cref{val7pf-NotInvt}, there is an automorphism~$\alpha$ of~$X$ that maps an $\nh$-edge to an $a$-edge. By composing with translations on the left and right, we may assume that $\alpha$ fixes the vertex $(0,0)$, and maps an $\nh$-edge that is adjacent to $(0,0)$ to an $a$-edge that is adjacent to $(0,0)$.

We see from \cref{order2orbit} (by the same argument as in \cref{cinv}) that the set of $c$-edges is invariant under all automorphisms of $BX$. So $\alpha$ restricts to an automorphism of $BX_0'$, where $X_0'$ is the connected component of $\Cay(\ZZ_n,\{\pm a, \pm b, \nh\})$ that contains~$0$. However, since $X_0' \cong \Cay(\ZZ_8,\{\pm1,\pm3,4\})$, we know from \fullcref{val5Z8}{invt} that the set of $\nh$-edges is invariant under all automorphisms of $BX_0'$. This contradicts the choice of~$\alpha$. 

\begin{subcase}
Assume $|a| = 10$.
\end{subcase}
Once again, we may assume that $\pm a$ and $\pm b$ are the four generators of~$\langle a \rangle$ (which contains $\nh$, because $|a|$ is even).

\begin{subsubcase}
Assume $|c|\in \{3,4,6\}$.
\end{subsubcase}
Then $X$ is one of the following graphs:
\begin{enumerate}
\item $|c| = 3$ $\implies$ $X = \Cay(\ZZ_{30},\{\pm 3,\pm9,\pm10,15\})$. By \fullcref{small}{30-3-9-10-15}, this graph is stable.
\item $|c| = 4$ $\implies$ $X = \Cay(\ZZ_{20},\{\pm 2,\pm5,\pm 6,10\})$. This graph appears in part~\pref{val7-Z20} of the statement of the \lcnamecref{val7} (with parameters $k=1$ and $t = 5$).
\item $|c| = 6$ $\implies$ $X = \Cay(\ZZ_{30},\{\pm3,\pm5,\pm9,15\})$. This is a bipartite graph, so trivially unstable.
\end{enumerate}

\begin{subsubcase}
Assume $|c|\notin \{3,4,6\}$.
\end{subsubcase}
Since $|a| = 10$, we may write $n = 10m$. 

As before, we see from \cref{order2orbit} that the set of $c$-edges is invariant under all automorphisms of $BX$. Then \cref{StableSubgraph} implies that $|c|$ is even (since cycles of odd length are stable). 

If $m$ is odd, then $c$ must also be odd (since $|c|$ is odd and $n = 10m$ is not divisible by~$4$), so all elements of~$S$ are odd. Then $X$ is bipartite, which contradicts the assumption that $X$ is nontrivially unstable.

So $m$ must be even, which means we may write $m = 2k$. In this notation, we have $X = \Cay(\ZZ_{20k},\{\pm c, \pm 2k, \pm 6k,10k\})$. Note that $c$ must be odd, since $X$ is not bipartite, so this is listed in part~\pref{val7-Z20} of the \lcnamecref{val7} (with parameter $t = c$).

\begin{subcase}
Assume $|a| = 12$. 
\end{subcase}
Then $S$ contains the $4$ generators of $\langle a \rangle$, which are without loss of generality $\pm a$ and $\pm b$.

\begin{subsubcase}
Assume $|c| \in \{3,4,6\}$.
\end{subsubcase}
Then $X$ is one of the following graphs:
\begin{enumerate}
\item $|c| = 3$ $\implies$ $X = \Cay(\ZZ_{12},\{\pm1,\pm4,\pm5,6\})$. This is listed in part~\pref{val7-Z4} of the \lcnamecref{val7} (with parameters $k=3$ and $t = -1$).
\item $|c| = 4$ $\implies$ $X = \Cay(\ZZ_{12},\{\pm1,\pm3,\pm5,6\})$. This is listed in part~\pref{val7-Z12-12k} of the \lcnamecref{val7} (with parameters $k = t = 1$).
\item $|c| = 6$ $\implies$ $X = \Cay(\ZZ_{12},\{\pm1, \pm2, \pm5, 6\})$. This is listed in part~\pref{val7-bc-odd} of the \lcnamecref{val7} (with parameters $k = 1$, $b = 1$, and $c = 5$).
\end{enumerate}

\begin{subsubcase}
Assume $|c| \notin \{3,4,6\}$.
\end{subsubcase}
As usual, we see from \cref{order2orbit} (by the same argument as in \cref{cinv}) that the set of $c$-edges is invariant under all automorphisms of $BX$. Therefore, if we let $S_0\coloneqq\{\pm a, \pm b, \nh\}$, then the set of $S_0$-edges is also invariant under every automorphism of~$BX$. Since the connected components of $\Cay(\ZZ_n,S_0)$ are isomorphic to $\Cay(\ZZ_{12},\{\pm1,\pm5,6\})$, we see from \fullcref{small}{12-1-5-6} that these connected components are stable. It therefore follows from \cref{StableSubgraph} that $X$ is stable.

\begin{subcase}
Assume $|a|\in \{3,4,6\}$.
\end{subcase}

\begin{subsubcase}\label{val7pf-case1-order-ab-346}
Assume $|b|\in \{3,4,6\}$.
\end{subsubcase}
Note that no two of $a$, $b$, and~$c$ can have the same order, as they cannot generate the same subgroup, since a cyclic group of order $3$, $4$, or~$6$ has only $2$ generators. 

If $|c| \in \{3,4,6\}$, then $\{|a|, |b|, |c|\} = \{3,4,6\}$, so 
	\[ X = \Cay(\ZZ_{12},\{\pm2,\pm3,\pm4,6\}). \]
By \fullcref{small}{12-2-3-4-6}, this graph is stable.

So we must have $|c|\notin \{3,4,6\}$. Then, yet again, \cref{order2orbit} implies that the set of $c$-edges is invariant under every automorphism of $BX$. Therefore, if we let $S_0 \coloneqq \{\pm a, \pm b, \nh \}$, then the set of $S_0$-edges is also invariant.

\begin{itemize}
	\item If $\{|a|,|b|\} = \{3,4\}$, then the connected components of $\Cay(\ZZ_n,S_0)$ are isomorphic to $\Cay(\ZZ_{12},\{\pm3,\pm4,6\})$. By \fullcref{small}{12-3-4-6}, this graph is stable and consequently, by \cref{StableSubgraph}, so is $X$.

	\item If $\{|a|,|b|\} = \{3,6\}$, then the connected components of $\Cay(\ZZ_n,S_0)$ are isomorphic to $\Cay(\ZZ_6,\{\pm1,\pm2,3\})$, which, in turn, is isomorphic to $K_6$. By \cref{KnStable}, this is stable. Then, by \cref{StableSubgraph}, so is $X$.
\end{itemize}
Therefore, we must have $\{|a|,|b|\} = \{4,6\}$. 

In this situation, a connected component $X_0'$ of $\Cay(\ZZ_n, S_0)$ is isomorphic to $\Cay(\ZZ_{12},\{\pm 2,\pm 3,6\})$. This graph is nontrivially unstable, as it is listed in \fullcref{val5}{Z12t} with parameters $k=1$ and $s=3$ (and \cref{val5Nontrivial} tells us that it is nontrivially unstable, not merely unstable).
So \cref{val5orbits} tells us that the group $\Aut BX_0'$ has precisely two orbits on the edges of $BX_0'$. We know from the assumption of \cref{val7pf-NotInvt} that there exists an automorphism of $BX$ that maps an $\nh$-edge to an $a$-edge. Since the set of $S_0$-edges is invariant, this implies there is an automorphism of $BX_0'$ that maps an $\nh$-edge to an $a$-edge. (So the $\nh$-edges are in the same orbit as the $a$-edges.) It follows that the set of $b$-edges is invariant. 

If $|a|=4$, then the invariant subgraph $\Cay(\ZZ_n,\{\pm a,\nh\})$ of $X$ has connected components isomorphic to $K_4$, which is stable by \cref{KnStable}. It then follows from \cref{StableSubgraph} that $X$ is stable.

So we must have $|a|=6$. Then $|b|=4$. Write $n=12k$. Then, since $|a|$ is not divisible by~$4$, we see that $a$ must be even. 

If $b$ and $c$ are of opposite parity, then the connected components of $\Cay(\ZZ_n,\{\pm b,\pm c\})$ are nonbipartite. It is not difficult to see that they are also twin-free (since $|b| = 4$ and the valency of the graph is so small).\refnote{val7pf-case1-order-ab-346-aid} Since the set of $b$-edges is invariant under $\Aut BX$ and the set of $c$-edges is also invariant, we know that the set of $\{b,c\}$-edges is invariant. It therefore follows from \fullcref{Val4Subgraph}{even} that $X$ is stable. 

So $b$ and $c$ must have the same parity. However, they cannot both be even, since $a$ is known to be even, and $X$ is not connected if every element of~$S$ is even. So $b$ and~$c$ are odd. Since $|a| = 6$, we now see that $X$ is listed in part~\pref{val7-bc-odd} of the \lcnamecref{val7}.

\begin{subsubcase} \label{val7pf-case1-order-b-not-346}
Assume $|b|\notin \{3,4,6\}$.
\end{subsubcase}
By symmetry, we may additionally suppose that $|c|\notin \{3,4,6\}$. Then, by applying \cref{order2orbit} one last time, we see that no automorphism of $BX$ maps an $\nh$-edge to a $b$-edge or a $c$-edge. This implies that the set of $\{a, \nh\}$-edges is invariant under all automorphisms of $BX$, and the set of $\{b,c\}$-edges is also invariant.

If $|a| = 3$, then the connected components of $\Cay(\ZZ_n,S_0)$ are isomorphic to $\Cay(\ZZ_6,\{\pm 2,3\})$, which is stable by \fullcref{small}{6-2-3}. It follows by \cref{StableSubgraph} that $X$ is also stable. 

If $|a| = 4$, then the connected components of $\Cay(\ZZ_n,S_0)$ are isomorphic to $\Cay(\ZZ_4,\{1,2,3\})$, which is further isomorphic to $K_4$, which is stable by \cref{KnStable}. It follows again by \cref{StableSubgraph} that $X$ is stable.

We now only have the case $|a| = 6$ to consider. Let $n = 6m$. Note that then $\{\pm a, \nh\} = \{\pm m,3m\}$. We focus on 
	\[ X_0 \coloneqq \Cay(\ZZ_n, \{\pm b, \pm c\}) .\]
Let $X_0'$ be the connected component of $X_0$ that contains~$0$. As $X$ is assumed to be unstable and $X_0$ is $4$-valent, we can conclude from \cref{Val4Subgraph} that $|V(X_0')|$ is even and that either $X_0'$ is bipartite or $X_0'$ is not twin-free. 

Suppose, first, that $X_0'$ is bipartite. This implies that $b$ and $c$ are of the same parity.
	\begin{enumerate}
	\item If $b$ and $c$ are even, then $m$ must be odd. (Otherwise, $S$ would contain only even integers, so $X$ would not be connected.) Consequently, since $n = 6m$, it follows that $b$ and $c$ are both of odd order, so $X_0'$ contains an odd cycle. This contradicts the assumption that $X_0'$ is bipartite.

	\item If $b$ and $c$ are odd, then $m$ must be even. (Otherwise, every element of~$S$ is odd, so $X$ is bipartite, which contradicts the fact that $X$ is \emph{nontrivially} unstable.). Hence, we may write $m = 2k$, and then we see that $X$ is listed in part~\pref{val7-bc-odd} of the \lcnamecref{val7}. 
	\end{enumerate}

We can now assume that $X_0'$ is not bipartite and is not twin-free. By \fullcref{nottwinfree}{d=4}, it follows that $X_0'$ is isomorphic to $K_{4,4}$ or $C_\ell\wr \overline{K_2}$ with $\ell=|V(X_0')| / 2$. As $X_0'$ is assumed to be nonbipartite, the first case is not possible, and in the second case, we see that $|V(X_0')|/2$ is odd.

From the fact that $X_0'$ is not twin-free (and the valency of~$X$ is small --- only~$4$\refnote{val7pf-case1-order-b-not-346-AID}) we obtain that $c = b + \nh$ (perhaps after replacing $b$ with~$-b$). For every $v \in \ZZ_n \times \ZZ_2$, we deduce that $v + (\nh,0)$ is the unique twin of~$v$ in the graph $BX_0$. Since automorphisms must map twin vertices to twin vertices, and the set of $\{b,c\}$-edges is invariant under $\Aut BX$, we conclude that the cosets of the subgroup $\langle (\nh,0)\rangle$ are blocks for the action of $\Aut BX$ (see \cref{BlockDefn}). Note that quotient graph of $BX_0'$ with respect to the partition induced by cosets of $\langle (\nh,0)\rangle$ is a cycle. Since $|V(X_0')|/2$ is odd, the length of this cycle is $|V(X_0')|$. Therefore, in this cycle, the vertices corresponding to the cosets $\{(0,0),(\nh,0)\}$ and $\{(0,1),(\nh,1)\}$ are at maximum distance. 

It follows that if $\alpha$ is an automorphism of $BX$ that fixes $(0,0)$, then $\alpha$ must fix the coset $\{(0,1),(\nh,1)\}$ set-wise. Also note that $\alpha$ must fix the set of neighbors of $(0,0)$ in $BX_1$ with $X_1\coloneqq\Cay(\ZZ_n, \{ \pm a, \nh \})$ (because  $\{a, \nh\}$-edges are invariant); this set of neighbors is $\{\pm(a,1),(\nh,1)\}$. Then $\alpha$ must fix the intersection of these two sets, which is $\{(\nh,1)\}$. The automorphism~$\alpha$ must therefore also fix the twin of the vertex $(\nh,1)$, which is $(0,1)$. We now conclude from \cref{StableIffStabilizer} that $X$ is stable.

\medbreak

This completes the proof of \cref{val7pf-NotInvt}. For the remaining cases, we may assume that every automorphism of $BX$ maps $\nh$-edges to $\nh$-edges, and is therefore an automorphism of the canonical double cover of the following subgraph~$X_0$ of~$X$: 

\begin{notation}
For the remainder of the proof, we let 
	\[ X_0 \coloneqq \Cay(\ZZ_n,\{\pm a,\pm b, \pm c\}) \]
be the graph that is obtained from $X$ by removing all of the $\nh$-edges.
\end{notation}

\begin{case}
Assume $X_0$ is not connected. 
\end{case}
(We also assume that every automorphism of $BX$ maps $\nh$-edges to $\nh$-edges, for otherwise \cref{val7pf-NotInvt} applies.)
Then $\langle a,b,c \rangle$ is a proper subgroup of $\ZZ_n$, but $\langle a,b,c,\nh \rangle$ is the whole group $\ZZ_n$. It follows that $n = 2m$, where $m$ is odd, and $|\langle a,b,c \rangle| = m$. Let $X_0'$ be the connected component of $X_0$ that contains~$0$. Note that $X_0'$ is connected by definition and also that it is not bipartite (since it is vertex-transitive and of odd order). 

If $X_0'$ is twin-free, then it follows by \cref{OddCirculant} that $X_0'$ is stable. By \cref{StableSubgraph}, we conclude that $X$ is stable, which is a contradiction.

Therefore, we know that $X_0'$ is not twin-free. Then by \fullcref{nottwinfree}{Y}, $X_0' \cong Y\wr \overline{K_d}$, where $Y$ is a vertex-transitive, connected graph and $d \geq 2$. Let $\delta$ be the valency of~$Y$. Since $X_0'$ is $6$-valent, it follows that $6 = \delta d$ and therefore, $d \in \{2,3,6\}$. Because $d \, |V(Y)| = |V(X_0')|$ is odd, it cannot happen that $d$ is even. Hence, we conclude that $d = 3$. It follows that $\delta = 2$, so $Y$ must be a cycle. Letting 
	\[ k := |V(Y)| = m/3 = n/6 , \] 
we conclude that $X_0' \cong C_k \wr \overline{K_3}$. Since $X_0$ is the disjoint union of two copies of~$X_0'$, we now see that
	\[ X_0 = \Cay(\ZZ_{6k}, \{ \pm s, 2k \pm s, 4k \pm s \}), \]
for some $s \in \ZZ_n$, such that $|\langle s, 2k \rangle| = n/2$. (This final condition means $\gcd(s, 2k, 6k) = 2$, so $s$ must be even; write $s = 2t$. Then
	\[ X = \Cay(\ZZ_{6k}, \{ \pm 2t, 2k \pm 2t, 4k \pm 2t, 3k \}). \]
This graph is listed in part~\pref{val7-Z6} of the \lcnamecref{val7}.

\begin{case}
Assume that $X_0$ is bipartite and that $BX_0$ is arc-transitive.
\end{case}
(We also assume that every automorphism of $BX$ maps $\nh$-edges to $\nh$-edges, and that $X_0$ is connected, for otherwise a previous case applies.)
Since $X_0$ is bipartite, it follows that $BX_0$ is isomorphic to a disjoint union of two copies of $X_0$. Since $BX_0$ is assumed to be arc-transitive, it follows that $X_0$ is a connected arc-transitive circulant graph. Consequently, it is one of the four types that are listed in \cref{ArcTrans}. Type~\fullref{ArcTrans}{Kn} is impossible because $X_0$ is bipartite (and has valency~$6$). By \cref{ReduceNonNormal}, it follows that $X_0$ is not a normal Cayley graph, so it does not have type~\fullref{ArcTrans}{normal} either.

\begin{subcase}
Assume $X_0$ has type~\fullref{ArcTrans}{wreath}.
\end{subcase}
Then $X_0 = Y\wr \overline{K_d}$, where $Y$ is a connected arc-transitive circulant graph, and $d \ge 2$. Let $\delta$ be the valency of $Y$. Since $X_0$ has valency $6$, it follows that $\delta d=6$, so $d \in \{2, 3, 6\}$.

\begin{subsubcase} \label{val7pt-d=6}
Assume $d=6$.
\end{subsubcase}
Then $X_0\cong K_{6,6}$. It follows that $n=12$ and we obtain $X=\Cay(\ZZ_{12},\{\pm 1, \pm 3, \pm 5, 6\})$. We have already seen this graph; it is listed in part~\pref{val7-Z12-12k} of the \lcnamecref{val7} with parameters $k=t=1$.

\begin{subsubcase}
Assume $d=3$.
\end{subsubcase}
Then $Y$ is a cycle of even length $2m$. We obtain that 
	\[ X_0 = \Cay(\ZZ_{6m},\{\pm t, 2m\pm t, 4m\pm t\}) . \]
Then $X = \Cay(\ZZ_{6m},\{\pm t, 2m\pm t, 4m\pm t, 3m\})$. Since $X_0$ is connected, we have $\gcd(t, 2m) = 1$. In particular, $t$ is odd. Since $X$ is not bipartite, this implies $m$ is even. Writing $m=2k$, we get that 
	\[ X = \Cay(\ZZ_{12k},\{\pm t, 4k\pm t, 8k\pm t,6k\}). \]
So $X$ is listed in part~\pref{val7-Z12-12k} of the \lcnamecref{val7}. 

\begin{subsubcase}
Assume $d=2$.
\end{subsubcase}
Then $Y$ is a connected, cubic, arc-transitive, circulant graph, so \cref{CubicArcTransCirculant} tells us that $Y$ is either $K_4$ or $K_{3,3}$. Since $X_0$ is bipartite, it follows that $Y$ is isomorphic to $K_{3,3}$. We obtain that $X_0 \cong K_{6,6}$, a case that has already been considered in \cref{val7pt-d=6}.

\begin{subcase}
Assume $X_0$ has type~\fullref{ArcTrans}{AlmostWreath}.
\end{subcase}
Letting $n = |V(X_0)|$, we have $X_0 = Y\wr \overline{K_d} - d Y$, where $n = md$, $d > 3$, $\gcd(d,m) = 1$ and $Y$ is a connected arc-transitive circulant graph of order~$m$. Let $\delta$ be the valency of~$Y$. Since $X_0$ has valency~$6$, we must have $\delta (d-1) = 6$. Since $d > 3$, this implies that either $d = 7$ and $\delta = 1$ or $d = 4$ and $\delta = 2$.

If $\delta = 1$, then $Y = K_2$ (so $m = 2$). This implies $n = md = 2 \cdot 7 = 14$. Then $X_0=K_2\wr \overline{K_7}-7K_2$ and $X=K_2\wr\overline{K_7}\cong K_{7,7}$. Hence, $X$ is bipartite and trivially unstable.

Assume, now, that $\delta = 2$ and $d = 4$. Since $\delta = 2$, we have $Y = C_m$. Then $m$~must be even, because $X_0$ is bipartite. This contradicts the fact that $\gcd(d,m) = 1$.

\begin{case} \label{final}
Assume that none of the preceding cases apply.
\end{case}
This means that:
	\begin{enumerate}
	\item every automorphism of $BX$ maps $\nh$-edges to $\nh$-edges,
	\item $X_0$~is connected,
	and
	\item either $X_0$ is not bipartite or $BX_0$ is not arc-transitive.
	\end{enumerate}

\begin{subcase} \label{allinv}
Assume there exists $s \in \{a,b,c\}$, such that the set of $s$-edges is invariant under the action of $\Aut(BX)$, and the graph $\Cay(\ZZ_n, \{\pm s, \nh\})$ is not bipartite.
\end{subcase}
Let $X_1\coloneqq\Cay(\ZZ_n,\{\pm s, \nh\})$, and let $X_1'$ be a connected component of~$X_1$. Then $X_1'$ is connected, cubic, and nonbipartite. The graph $X_1'$ must also be twin-free, because otherwise \fullcref{nottwinfree}{prime} would imply that $X_1'\cong K_{3,3}$, which contradicts the fact that $X_1'$ is not bipartite. We therefore conclude from \cref{val3} that $X_1'$ is stable. By our assumptions, the set of $s$-edges is invariant under the action of $\Aut(BX)$, and the set of $\nh$-edges is also invariant. So it follows by \cref{StableSubgraph} that $X$ is stable, which is a contradiction.

\begin{subcase}
Assume there exists $s \in \{a,b,c\}$, such that the set of $s$-edges is invariant under $\Aut BX$.
\end{subcase}
Since $X$ is not bipartite, we know that $S$ contains two elements of opposite parity. Therefore, we may assume without loss of generality that $a + \nh$ is odd. We may assume that the set of $a$-edges is not invariant under $\Aut BX$, for otherwise \cref{allinv} applies (with $s = a$). So $s \neq a$. Hence, we may assume, without loss of generality, that $s = c$, which means the set of $c$-edges is invariant under $\Aut BX$. This implies that the $a$-edges and the $b$-edges are in the same orbit of $\Aut BX$.

We consider the following two subgraphs of $X$:
	\[ \text{$X_1\coloneqq\Cay(\ZZ_n,\{\pm c, \nh\})$ and $X_2\coloneqq\Cay(\ZZ_n,\{\pm a, \pm b\})$.} \]
Denote their connected components containing~$0$ by $X_1'$ and~$X_2'$, respectively. 

We may assume that $X_1'$ is bipartite. (Otherwise, \cref{allinv} applies with $s = c$.) This implies that $|c|$ is even, so $\nh \in \langle c \rangle$. More precisely, we have $\nh = (|c|/2) \, c$. Since $X_1'$ is bipartite, this implies that $|c|/2$ is odd.

Also, since $X$ is unstable, it follows from \cref{Val4Subgraph} that $|V(X_2')|$ is even and either $X_2'$ is not twin-free or $X_2'$ is bipartite. 

\begin{subsubcase} \label{val7pf-twins}
Assume $X_2'$ is not twin-free. 
\end{subsubcase}
We claim that $X_2'\cong K_{4,4}$. From \fullcref{nottwinfree}{d=4}, we obtain that $X_2'$ is isomorphic to $K_{4,4}$ or $C_{\ell} \wr \overline{K_2}$ with $\ell\coloneqq |\langle a,b\rangle| / 2$. Let $X_2^*$ be the connected component of $\Cay(\ZZ_n,\{\pm a,\pm b,\nh\})$ containing $0$. Note that $X_2^*$ is obtained by adding $\nh$-edges to $X_2'$. Since $X_2'\cong C_\ell\wr\overline{K_2}$, this implies $X_2^*\cong C_{\ell}\wr K_2$. However, since $\nh$-edges are also invariant, $X_2^*$ cannot be stable, since in this case \cref{StableSubgraph} would imply that $X$ is stable. By \cref{CycWr}, we therefore conclude that $\ell = 4$. Finally, note that $C_4\wr \overline{K_2}\cong K_{4,4}$. Hence, we may assume that $X_2'\cong K_{4,4}$. This completes the proof of the claim.

It follows from the claim that $|V(X_2')| = |\langle a, b\rangle|= 8$, so we may write $n=8k$. The claim then implies that $\{\pm a,\pm b\} = \{\pm k,\pm 3k\}$. We also obtain:
	\[8k = n = |\langle a,b,c,\nh\rangle| = |\langle a,b,c\rangle|=\frac{|\langle a,b\rangle| \cdot |c|}{|\langle a,b \rangle \cap \langle c \rangle|} = \frac{8|c|}{|\langle a,b \rangle \cap \langle c \rangle|}\]
This immediately implies that $k$ divides $|c|$. More precisely, since $|c|/2$ is odd, and the denominator of the right-most term is a divisor of $|\langle a,b \rangle| = 8$ (and is a multiple of $|\langle \nh \rangle| = 2$), the only possibility is that $k = |c|/2$ (so $k$ is odd). We conclude that:
\[X = \Cay(\ZZ_{8k},\{\pm c, \pm k, \pm 3k, 4k\}) .\]
Since $|c| = 2k = 8k/4$, we know that $\gcd(c, 8k) = 4$; this means that we may write $c = 4t$ with $t$~odd. So $X$ is listed in part~\pref{val7-Z8} of the \lcnamecref{val7} (with parameter $4t = c$).

\begin{subsubcase}
Assume $X_2'$ is bipartite. 
\end{subsubcase}
Then $a$ and $b$ must be of the same parity, but as $X$ is nonbipartite, it follows that their parity is opposite to that of $c$ and~$\nh$. Therefore, if we let $X_2^*$ be a connected component of $\Cay(\ZZ_n, \{\pm a, \pm b, \nh\})$, then $X_2^*$ is a connected, nonbipartite, $5$-valent circulant graph. 
By \fullcref{nottwinfree}{prime}, it is also twin-free (since it is not bipartite, and therefore cannot be isomorphic to $K_{5,5}$).
We conclude that $X_2^*$ is not trivially unstable. 

Due to \cref{StableSubgraph} and the fact that $X$ is unstable, we see that $X_2^*$ is not stable. Hence, it is nontrivially unstable. From our assumptions, we already know that $a$-edges and $b$-edges are in the same orbit under the action of $\Aut B X_2^*$. This implies that if the set of $\nh$-edges is not invariant, then all edges of $BX_2^*$ are in the same orbit, which would contradict \cref{val5orbits}. Hence, the set of $\nh$-edges must be invariant, so \cref{val5nhInvt} tells us that $X_2^*$ is isomorphic to $\Cay(\ZZ_8,\{ \pm 1, \pm 3,4\})$. Therefore, $X_2' \cong \Cay(\ZZ_8,\{ \pm 1, \pm 3\}) \cong K_{4,4}$ is not twin-free, so \cref{val7pf-twins} applies.

\begin{subcase}
Assume all edges of $BX$ besides $\nh$-edges are in the same orbit of $\Aut BX$.
\end{subcase}
As every automorphism of $BX$ is also an automorphism of $BX_0$, it follows that $BX_0$ is arc-transitive. Then by the assumption of \cref{final}, $X_0$ must be nonbipartite.

\begin{subsubcase} \label{val7pf-a=4}
Assume $|a|=4$.
\end{subsubcase}
Since $BX_0$ is arc-transitive, there exist automorphisms of $BX_0$ mapping an $a$-edge to a $b$-edge and a $c$-edge. Since $|a|=4$, by \cref{order2orbit} it follows that all generators of the subgroups $\langle b \rangle$ and $\langle c \rangle$ are in $S$. As all elements of $S$ are pairwise distinct, it is clear that either $b$ and $c$ generate distinct subgroups with exactly $2$ generators each or $b$ and $c$ generate the same subgroup with exactly $4$ generators.

In the first case, we get that $|b|\neq|c|$ and both lie in $\{3,4,6\}$. As $|a|=4$, we conclude that $\{|a|, |b|, |c| \} =  \{3,4,6\}$, so $X = \Cay(\ZZ_{12},\{\pm2,\pm3,\pm4,6\})$. By part \pref{small-12-2-3-4-6} of \cref{small}, this graph is stable.

In the second case, it follows that $|b|=|c|\in \{5,8,10,12\}$. 
Then $X$ is one of the following graphs:
\begin{itemize}
\item $|b| = |c| = 5$ $\implies$ $X = \Cay(\ZZ_{20},\{\pm 4,\pm 5,\pm 8,10\})$. This is stable by \fullcref{small}{20-4-5-8-10}.
\item $|b| = |c| = 8$ $\implies$ $X = \Cay(\ZZ_8,\{\pm1,\pm2,\pm3,4\}) \cong K_8$. This is stable by \cref{KnStable}.
\item $|b| = |c| = 10$ $\implies$ $X = \Cay(\ZZ_{20},\{\pm2,\pm5,\pm6,10\})$. We find this graph in part~\pref{val7-Z20} of the \lcnamecref{val7} (with parameters $k=1$ and $t = 5$).
\item $|b| = |c| = 12$ $\implies$ $X = \Cay(\ZZ_{12},\{\pm1,\pm3,\pm5,6\}$. We find this graph in part~\pref{val7-Z12-12k} of the \lcnamecref{val7} (with parameters $k=t=1$).
\end{itemize}

\begin{subsubcase}
Assume $2s \neq 2t$, for all $s,t \in S$, such that $s \neq t$.
\end{subsubcase}
Then from \cref{2S'-BX}, it follows that any automorphism of $BX_0$ (and consequently of $BX$, as well) is an automorphism of 
	\[ \Cay(\ZZ_n \times \ZZ_2, \{(\pm 2a,0),(\pm 2b,0),(\pm 2c,0)\}) . \]
Therefore, (the vertex sets of) the connected components of this graph are blocks for the action of $\Aut BX_0$. These blocks are the four cosets of the subgroup $2\ZZ_n \times \{0\}$. As $BX_0$ is $6$-valent and arc-transitive (and all neighbors of $(0,0)$ are in $\ZZ_n \times \{1\}$), it follows that either all $6$ neighbors of $(0,0)$ in $BX_0$ lie in the same coset of $2\ZZ_n\times \{0\}$ or three of its neighbors lie in $2\ZZ_n \times \{1\}$ and the other three lie in $(1+ 2\ZZ_n) \times \{1\}$. In the first case, it follows that $a,b,c$ are all of the same parity, which contradicts the fact that $X_0$ is connected and nonbipartite. In the second case, it follows that exactly three elements of the set $\{\pm a, \pm b,\pm c\}$ are odd, which is impossible, since $-s$ has the same parity as~$s$.

\begin{subsubcase}\label{val7pf-NO-order4-NO-2a=2b}
Assume that neither of the two preceding \lcnamecref{val7pf-a=4}s apply.
\end{subsubcase}
This means that:
	\begin{enumerate}
	\item  $S$ contains no element of order $4$,
	 and
	 \item we may assume, without loss of generality, that $2a = 2b$.
	 \end{enumerate}
By \cref{2S'-BX}, every automorphism of $BX_0$ is also an automorphism of $\Cay(\ZZ_n \times \ZZ_2,\{(\pm 2c,0)\})$. This implies that the cosets of $\langle 2c \rangle \times \{0\}$ are blocks for the action of $\Aut BX_0$. The two $c$-neighbors of $(0,0)$ are both in the coset $(c,1) + \bigl( \langle 2c \rangle \times \{0\} \bigr)$. Therefore, by arc-transitivity, either all neighbors of $(0,0)$ in $BX_0$ are in this coset or there are three different cosets, each containing two neighbors of $(0,0)$. 

However, if all neighbors of $(0,0)$ are in $(c,1) + \bigl( \langle 2c \rangle \times \{0\} \bigr)$, then $a$ and $b$ have the same parity as $c$. This contradicts the fact that $X_0$ is connected and nonbipartite. 

So there are three different cosets that each contain two neighbors of $(0,0)$. Consider the quotient graph of $BX_0$ with respect to the coset partition induced by $\langle 2c \rangle \times \{0\}$. This is a cubic, connected, bipartite, arc-transitive graph $Q$, which is a Cayley graph on $\ZZ_m \times \ZZ_2$, where $m$ is the index of $\langle 2c \rangle$ in $\ZZ_n$. It follows from \cref{ArcTrans} that there are only three cubic, connected Cayley graphs on abelian groups (up to isomorphism): $K_4$, $K_{3,3}$, and the cube~$Q_3$. 
	\begin{itemize}
	\item $K_4$ is not bipartite. 
	\item If $Q\cong K_{3,3}$, then $\langle 2c \rangle$ is of index $3$ in $\ZZ_n$. But this means $\langle 2c \rangle$ cannot be of index $2$ in $\langle c \rangle$, so it follows that $\langle 2c \rangle = \langle c \rangle$, so $c$ is of odd order. Then $n=3|c|$ is also odd. By \cref{OddCirculant}, $X$ is stable. 
	\end{itemize}
Therefore, $Q$ must be the cube (and therefore has exactly 8 vertices).

It follows that $\langle 2c \rangle \times 0$ is of index $8$ in $\ZZ_n \times \ZZ_2$, so the order of $2c$ is $n/4$. This means $n / {\gcd(n,2c)} = n/4$, so $\gcd(n,2c)=4$. Therefore, $c$ is an even integer, and $\nh$ is also even. Since $X$ is connected, then $a$ and $b=a+\nh$ must be odd. 

Note that the five non-zero neighbors of $(a,1)$ in $BX_0$ are 
	\[ (a \pm c, 0),\ (2a, 0) = (2b, 0), \ (a \pm b, 0) .\]
There is more than one path of length~$2$ from $(0,0)$ to each of these vertices. Due to arc-transitivity, this implies that the the path $(0,0), (c,1), (2c,0)$ is not the only path of length~$2$ from $(0,0)$ to $(2c,0)$. Because the first coordinate of $(2c,0)$ is an even integer, we conclude that $2c$ is a sum of two integers from $\{\pm a, \pm a+ \nh,\pm c\}$ of the same parity, besides $c+c$. As we have assumed there is no element of order $4$ in $S$, the case $2c = (-c)+(-c)$ is not possible. So $2c$ must be a sum of two (odd) integers from $\{\pm a, \pm a+ \nh\}$. Recalling that $2c \neq 2s$ for $s \neq c$, we see that (up to relabeling $a,-a$ and $a+\nh$), we must have either 
	\[ \text{$2c = a + (a + \nh) = 2a + \nh$ \ or \ $2c = a - (a + \nh) = \nh$.} \]
However, the case $2c = \nh$ is not possible, because no element of~$S$ has order~$4$. Therefore, we have $2c = 2a + \nh$. This means $2(c - a) = \nh$, so either $c = a + (n/4)$ or $c = a - (n/4)$. However, if $c = a + (n/4)$, then $-c = -a - (n/4)$; therefore, we may assume $c = a - (n/4)$, by replacing $a$, $b$, and~$c$ with their negatives, if necessary.

As $a$ is an odd integer, we know that $n/|a| = \gcd(a, n)$ is odd, so $\langle a \rangle$ contains every element of~$\ZZ_n$ whose order is a power of~$2$. In particular, it contains $\nh$ and $\pm n/4$. Since $b = a + \nh$ and $c = a + (n/4)$, we conclude that $\langle a \rangle = \ZZ_n$. Then, writing $n = 4k$, we have:
	\[X = \Cay(\ZZ_{4k},\{\pm a, \pm(a + 2k), \pm(a - k), 2k\}) . \]
If $a \equiv k \pmod{4}$, then this is listed in part~\pref{val7-Z4} of the \lcnamecref{val7} (with parameter $t = a$). 

To complete the proof, we will show that if $a \not\equiv k \pmod{4}$, then the subgraph~$X_0$ is stable. This then implies by \cref{StableSubgraph} that $X$ is stable as well, which is a contradiction.

Suppose, for a contradiction, that $X_0$ is unstable (and $a \equiv -k \pmod{4}$). To work around a conflict of notation, let us change our notation for~$X_0$, by writing $\alpha$ and~$\kappa$ instead of $a$ and~$k$:
		\[X_0 = \Cay(\ZZ_{4 \kappa},\{\pm \alpha, \pm(\alpha + 2\kappa), \pm(\alpha - \kappa)\}) .\]
Recall that $\kappa$ is odd, and $\alpha \equiv -\kappa \pmod{4}$.

Since $X_0$ is connected, nonbipartite, and twin-free, \refnote{val7pf-NO-order4-NO-2a=2b-AID}it is nontrivially unstable. So it must appear in the list of nontrivially unstable $6$-valent graphs in \cref{explicit6}. However:
	\begin{enumerate}
	\item Since $\kappa$ is odd, we know that $|V(X_0)|=4\kappa$ is not divisible by $8$. So $X_0$ cannot appear under \pref{explicit6-C1Se2}, \pref{explicit6-C3order2}, or \pref{explicit6-C3order4} in \cref{explicit6}. 
	\item Since the connection set of~$X_0$ has $4$ odd elements ($\pm \alpha$ and $\pm(\alpha + 2\kappa)$), we can also rule out the family \fullref{explicit6}{C1Se4}.
	\item If $X_0$ is a member of the family~\fullref{explicit6}{C2}, then, since $\alpha - k$ and its negative are the only even elements of the connection set for~$X_0$ (and the four elements $\pm(a + k)$ and $\pm(a - k)$ all have the same parity), the element~$a$ of~\fullref{explicit6}{C2} must be $\alpha - \kappa$ (perhaps after replacing $a$ with~$-a$). Since $\alpha \equiv -\kappa \pmod{4}$, this implies 
		\[ a = \alpha - \kappa \equiv 2\alpha \equiv 2 \pmod{4} , \]
	which contradicts the requirement of~\fullref{explicit6}{C2} that $a \equiv 0 \pmod{4}$. 
	\item Assume $X_0$ is a member of the family~\fullref{explicit6}{C4more}. Then we can find $m \in\ZZ$ with $\gcd(m,4\kappa)=1$ and $m^2\equiv 1\pmod{4\kappa}$ satisfying the identity listed in \fullref{explicit6}{C4more}. Using the notation from \cref{explicit6}, we observe the following cases:
	\begin{itemize}
	\item \emph{Assume $a=\alpha$.} We then obtain that:	
	\[(m-1)\alpha \equiv (m+1)(\alpha+2\kappa)\pmod{4\kappa},\] so $2\mid(\alpha+\kappa(m+1))$. This is clearly a contradiction, since the right-most expression is odd, since $\alpha$ and $m$ are odd.
	\item \emph{Assume $a=\alpha+2\kappa$.} We then obtain a contradiction of the same type:
	\[(m-1)(\alpha+2\kappa) \equiv (m+1)\alpha \pmod{4\kappa},\] so $2\mid (\kappa(m-1)-\alpha)$.
	\item \emph{Assume $a=\alpha-\kappa$.} It then follows that:
	\[(m-1)(\alpha-\kappa)\equiv (m+1)\alpha\pmod{4\kappa},\] so $\kappa\mid \alpha$.
	This is a contradiction, since we have already established that $\alpha$ is a generator of the group $\ZZ_{4\kappa}$.
	\end{itemize}
	\item Finally, suppose $X_0$ is a member of the family~\fullref{explicit6}{C4}.	
Since $\alpha - \kappa$ and its negative are the only even elements of the connection set of~$X_0$ (and the four elements $\pm b$ and $\pm m b + 2k$ all have the same parity), the element~$a$ of~\fullref{explicit6}{C2} must be $\alpha - \kappa$ (or its negative). Then $a$ and $m - 1$ are even (and $k = n/4 = \kappa$ is odd), so we have
	\[ (m - 1) a \equiv 0 \not\equiv 2 \equiv 2k \pmod{4} . \]
This contradicts the requirement that $(m - 1)a \equiv 2k \pmod{n}$.
	\end{enumerate}
Therefore $X_0$ does not appear on any of the lists in \cref{explicit6}. This contradiction completes this final case of the proof of the \lcnamecref{val7}.
\end{proof}

\AtEndDocument{


\newpage
\addtocontents{toc}{\vskip\bigskipamount}
\begin{appendix}

\section{Notes to aid the referee}


\begin{aid} \label{BlockDefn-H}
Let $G^*$ be the regular representation of $G$. Since $G^*$ is contained in $\Aut X$, it is clear that $\mathcal{B}$ is also a block for the action of~$G^*$. Hence, \cite[Exer.~1.5.6, p.~13]{DixonMortimer} tells us that $\mathcal{B}$ is an orbit of some subgroup~$H^*$ of~$G^*$. If we let $H$ be the corresponding subgroup of $G$, then the $H^*$ orbit of an element~$b$ of $G$ is precisely $b + H$, so it is a coset of~$H$.

We see from \cite[Exer.~1.5.3, p.~13]{DixonMortimer} that every coset of~$H$ is a block. Furthermore, since these cosets form a partition of $G$, we see that they are the entire ``system of blocks'' containing~$\mathcal{B}$, so \cite[Exer.~1.5.3, p.~12]{DixonMortimer} tells us that $\Aut X$ acts on this set of blocks.
\end{aid}


\begin{aid}\label{val6pf-same-coset}
Suppose $u$ and~$v$ are two distinct vertices in the same coset of $\langle (k, 0) \rangle$. Since $|k| = 4$, we may assume that $v$ is either $u + (k, 0)$ or $u + (2k, 0)$ (after interchanging $u$ and~$v$ if $v = u + (3k, 0)$).

If $v = u + (k, 0)$, then both vertices are adjacent to
        \begin{align*}
        u + (c + k, 0) &= v + (c, 0) \\
        u + (c, 0) &= v + (c - k, 0) \\
        u - (c, 0) &= v - (c + k, 0) \\
        u - (c - k, 0) &= v - (c, 0)
        \end{align*}
If $v = u + (2k, 0)$, then both vertices are adjacent to
        \begin{align*}
        u + (c + k, 0) &= v + (c - k, 0) \\
        u + (c - k, 0) &= v + (c + k, 0) \\
        u - (c + k, 0) &= v - (c - k, 0) \\
        u - (c - k, 0) &= v - (c + k, 0)
        \end{align*}
\end{aid}

\begin{aid}\label{val6pf-diff-cosets}
Assume $u$ and~$v$ are vertices of~$BX$ that are in different cosets of $\langle (k, 0) \rangle$, and let $w$ be a common neighbor. Since $S \subseteq (c + \langle k \rangle) \cup (-c + \langle k \rangle)$, we may assume $w \in u + \tilde c + \langle (k, 0) \rangle$ and $w \in v - \tilde c + \langle (k, 0) \rangle$, by interchanging $u$ and~$v$ if necessary.

Since $c \notin \langle k \rangle$ and $|\ZZ_{4k}/\langle k \rangle| = k = n/4$ is odd, we know that $4c \notin \langle k \rangle$. Therefore, taking congruences modulo $\langle (k, 0) \rangle$, we have
        \begin{align*}
        u - \tilde c
        &= u + \tilde c - (2c, 0)
        \\& \equiv w - (2c, 0)
        \\&\equiv v - \tilde c - (2c, 0)
        \\&= v + \tilde c - (4c, 0)
        \\&\not\equiv  v + \tilde c
                && \begin{pmatrix} \text{$4c \notin \langle k \rangle$, so} \\ (4c, 0) \not\equiv 0 \pmod{} \end{pmatrix}
        . \end{align*}
This implies that no vertex in $u - c + \langle (k, 0) \rangle$ is adjacent to~$v$, so all of the common neighbors are in $u + c + \langle (k, 0) \rangle$. Since the only neighbors of $u$ that are in this coset are $u + c$, $u + (c + k)$, and $u + (c - k)$, this obviously implies that $u$ and~$v$ have no more than~3 common neighbors.
\end{aid}

\begin{aid} \label{val7pf-case1-order-ab-346-aid}
Details missing in the proof of \cref{val7pf-case1-order-ab-346}
\begin{proof}
We are assuming that $b$ and $c$ are of opposite parity, so the graph $\Gamma\coloneqq\Cay(\ZZ_n,\{\pm b,\pm c\})$ is nonbiparite. Assume that it is not twin-free. Then by \fullcref{nottwinfree}{d=4} and the fact that $\Gamma$ is nonbipartite, it follows that
$\Gamma \cong C_\ell \wr \overline{K_2}$ with $\ell = |V(\Gamma)|/2$. Furthermore, the unique twin of $0$ is $\nh$ i.e. $\nh + \{\pm b,\pm c\} = \{\pm b,\pm c\}$. Since $|b|=4$, it follows that $b+\nh = -b$ and $-b+\nh = b$. Hence, $c + \nh = -c$ and $-c + \nh = c$, which implies $|c|$ is a divisor of $4$, a contradiction.
\end{proof}
\end{aid}

\begin{aid} \label{val7pf-case1-order-b-not-346-AID}
Details missing in the proof of \cref{val7pf-case1-order-b-not-346}.
\begin{proof}
By \fullcref{nottwinfree}{d=4}, $0$ and $\nh$ are twins. It follows that $\nh + \{\pm b,\pm c\} = \{\pm b,\pm c\}$. Clearly, $b+\nh \neq b$ and since $|b|\not\in \{2,4\}$, it follows that $b+\nh \neq -b$. Therefore, $b+\nh \in \{\pm c\}$, so up to relabelling $c$ and $-c$, it holds that $b+\nh = c$.
\end{proof}
\end{aid}

\begin{aid} \label{val7pf-NO-order4-NO-2a=2b-AID}
Proof that the graph $X_0 = \Cay(\ZZ_{4 \kappa},\{\pm \alpha, \pm(\alpha + 2\kappa), \pm(\alpha - \kappa)\})$ from \cref{val7pf-NO-order4-NO-2a=2b} is connected, nonbipartite and twin-free.
\begin{proof}
We recall that it has already been established that $\langle \alpha \rangle = \ZZ_n$ and that $\kappa$ and $\alpha$ are both odd. Connectedness and nonbipartiteness follow immediately. Assume for contradiction that $X_0$ is not twin-free. Then there exists a non-zero $h$ in $\ZZ_{4\kappa}$ such that:
\[h + \{\pm \alpha, \pm(\alpha + 2\kappa), \pm(\alpha - \kappa)\} = \{\pm \alpha, \pm(\alpha + 2\kappa), \pm(\alpha - \kappa)\}.\]
Since the connection set of $X_0$ is of cardinality $6$, it cannot contain equal number of even and odd elements, so it follows that $h+ S_o = S_o$ and $h+S_e=S_e$. In particular, since $\pm \alpha$ and $\pm (\alpha+2\kappa)$ are of the same parity (both are odd), it follows that $h + (\alpha-\kappa) = -(\alpha-\kappa)$ and $h - (\alpha-\kappa)=\alpha-\kappa$. We obtain that $2h=0$, so $h=2\kappa$. But then $2\kappa + (\alpha-\kappa)=-(\alpha-\kappa)$ implies that $2\alpha=0$ and $\alpha \in \{0,2\kappa\}$. This is a contradiction with $\alpha$ being odd.
\end{proof}
\end{aid}
\end{appendix}}  

\end{document}